\documentclass[12pt, a4j]{amsart}
\title{On fibration stability after Dervan-Sektnan and singularities}
\author{Masafumi Hattori}
\address{Department of Mathematics, Faculty of Science, Kyoto University, Kyoto, 606-8502, Japan}
\email{hattori.masafumi.47z@st.kyoto-u.ac.jp}
\usepackage{amsthm}
\usepackage{amsmath}
\usepackage{amssymb}
\usepackage{amsfonts}
\usepackage{mathrsfs}
\usepackage{enumerate}
\usepackage{amscd}
\usepackage{graphicx}
\usepackage[all]{xy}
\usepackage[a4paper, top=2cm, bottom=2cm, left=2cm, right=2cm]{geometry}
\def\coloneq{\mathrel{\mathop:}=}
\newtheorem{thm}{Theorem}[section]
\newtheorem{lem}[thm]{Lemma}
\newtheorem{prop}[thm]{Proposition}
\newtheorem*{claim}{Claim}

\newtheorem*{thth}{Theorem}
\newtheorem{cor}[thm]{Corollary}
\newtheorem{teiri}{Theorem}

\newtheorem{kei}[teiri]{Corollary}

\theoremstyle{definition}
\newtheorem{de}[thm]{Definition}
\newtheorem{ex}[thm]{Example}

\newtheorem{conj}[thm]{Conjecture}
\newtheorem*{conjj}{Conjecture}
\newtheorem{rem}[thm]{Remark}
\newtheorem*{ntt}{Notation}

\begin{document}
\maketitle
\begin{abstract}
We introduce $\mathfrak{f}$-stability, a modification of fibration stability of Dervan-Sektnan \cite{DS4}, and show that $\mathfrak{f}$-semistable fibrations have only semi log canonical singularities. Moreover, $\mathfrak{f}$-stability puts restrictions on semi log canonical centers on Fano fibrations.
\end{abstract}
\tableofcontents
\section{Introduction}

It is one of the most important problems in K\"{a}hler geometry that when a constant scalar curvature K\"{a}hler (cscK) metric exists on a polarized complex manifold $(X,L)$. The Yau-Tian-Donaldson (YTD) conjecture predicts that the existence of cscK metrics is equivalent to a certain algebro-geometric condition called K-polystability. Indeed, Berman-Darvas-Lu \cite{BDL} proved that if $(X,L)$ admits a cscK metric, then $(X,L)$ is K-polystable and Chen-Donaldson-Sun \cite{CDS} and Tian \cite{T2} proved YTD conjecture in Fano case independently. On the other hand, K-stability is the positivity of the leading term of Chow weight and is also an important notion in terms of the geometric invariant theory (GIT). Ross and Thomas \cite{RT07} studied K-stability in an algebro-geometric way first, and Odaka \cite{GIToda} found out the relationship between K-stability and singularities by applying the minimal model program (MMP). He also proved that if a $\mathbb{Q}$-Gorenstein variety $V$ is asymptotically Chow-semistable, then $V$ has only slc singularities as a corollary.

On the other hand, the existence problem of cscK metrics on fibrations is well-studied in K\"{a}hler geometry. In this paper, fibrations mean algebraic fiber spaces $f:X\to B$, where $f$ are morphisms of varieties with connected general fiber. See the details in Definition \ref{algfib}. We call an algebraic fiber space $f:X\to B$ is 
\begin{itemize}
\item a {\it Calabi-Yau fibration} if there exists a line bundle $L_0$ on $B$ such that $K_X\sim_{\mathbb{Q}} f^*L_0$.
\item a {\it Fano fibration} (resp., {\it a canonically polarized fibration}) if $K_X$ is $f$-antiample (resp., $f$-ample).
\item a {\it smooth} (resp., {\it flat}) {\it fibration} if $f$ is smooth (resp., flat).
\end{itemize}

Fine obtained a sufficiency condition \cite[Theorem 1.1]{Fine} for existence of a cscK metric on a smooth fibration whose fibers have cscK metrics first. Dervan and Sektnan \cite{DS2} introduced a differential geometric notion called optimal symplectic connection and proved the following generalization of the result of Fine:
\begin{thth}[{\cite[Theorem 1.2]{DS2}}]
Let $f:(X,H)\to (B,L)$ be a polarized smooth fibration. If $f$ admits an optimal symplectic connection and $(B,L)$ admits a twisted cscK metric with respect to the Weil-Petersson metric, then $(X,\delta H+f^*L)$ has cscK metrics for sufficiently small $\delta>0$.
\end{thth}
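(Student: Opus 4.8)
The plan is to produce the cscK metric in $c_1(\delta H + f^*L)$ by a singular perturbation (adiabatic limit) argument in the small parameter $\delta$, in the spirit of Fine \cite{Fine}. First I fix a relatively K\"ahler form $\omega\in c_1(H)$ whose restriction to every fiber $X_b=f^{-1}(b)$ is cscK --- such an $\omega$ exists since the existence of an optimal symplectic connection presupposes a fiberwise cscK relative form --- together with a base form $\omega_B\in c_1(L)$, and I consider the family
$$\omega_\delta=\delta\,\omega+f^*\omega_B\in c_1(\delta H+f^*L),\qquad 0<\delta\ll 1.$$
As $\delta\to 0$ the fibers shrink, and the scalar curvature admits an asymptotic expansion $S(\omega_\delta)=\delta^{-1}S_{-1}+S_0+\delta S_1+\cdots$ whose leading term $S_{-1}$ is the fiberwise scalar curvature, hence constant by the choice of $\omega$. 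Looking for a cscK representative of the form $\omega_{\delta,\phi}=\omega_\delta+i\partial\bar\partial\phi_\delta$ with $\phi_\delta$ itself a power series in $\delta$, the goal is to kill every nonconstant term in the expansion of $S(\omega_{\delta,\phi})$.

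Second, I would construct approximate solutions to arbitrarily high order in $\delta$. The linearization of $S$ at $\omega_\delta$ is, to leading order, the fiberwise Lichnerowicz operator $\mathcal D_{\mathcal V}^*\mathcal D_{\mathcal V}$; its fiberwise kernel consists of the potentials generating fiberwise holomorphic vector fields, and on the $L^2$-orthogonal complement it is invertible with bounds depending only on the fixed fiber geometry. Thus at each order one solves for the component of $\phi_\delta$ transverse to this kernel, and the whole construction reduces to a hierarchy of equations on the base $B$ for the fiber-averaged (kernel) components. This is exactly where the two hypotheses enter: the optimal symplectic connection condition annihilates the obstruction arising at the first nontrivial order --- it is precisely the equation ensuring that the relevant fiberwise projection vanishes --- while the twisted cscK condition on $(B,L)$ solves the remaining base-level linear equation, the twist being the Weil--Petersson form that appears naturally as a fiber integral at order $\delta^0$. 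Iterating produces, for every $N$, a potential $\phi_\delta^{(N)}$ with $S(\omega_{\delta,\phi^{(N)}})=\mathrm{const}+O(\delta^N)$ in suitable $\delta$-weighted norms.

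Finally, I would upgrade the approximate solution to a genuine one via a quantitative implicit function theorem. Writing the equation as $S(\omega_{\delta,\phi^{(N)}+\psi})=\mathrm{const}$, one must invert the full linearized operator $\mathcal L_\delta$ near $\phi^{(N)}$ and control the nonlinear remainder, uniformly as $\delta\to 0$. The central difficulty --- and the main analytic obstacle of the whole argument --- is that $\mathcal L_\delta$ degenerates in the adiabatic limit: its small eigenvalues come from the base directions and from the fiberwise holomorphic vector fields, so one must establish a uniform bound $\|\mathcal L_\delta^{-1}\|\le C\delta^{-m}$ in carefully chosen $\delta$-weighted H\"older or Sobolev norms. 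Granting such an estimate, one chooses $N$ large enough that the error $O(\delta^N)$ dominates the loss $\delta^{-m}$, whereupon a contraction mapping argument yields $\psi=\psi_\delta$ with $\|\psi_\delta\|\to 0$, and hence an honest cscK metric $\omega_{\delta,\phi^{(N)}+\psi_\delta}\in c_1(\delta H+f^*L)$ for all sufficiently small $\delta$. I expect the uniform invertibility of $\mathcal L_\delta$, together with the bookkeeping that identifies the optimal symplectic connection and twisted cscK equations as precisely the order-by-order obstructions, to be the crux.
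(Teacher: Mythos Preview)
This theorem is not proved in the paper at all: it is stated in the introduction purely as a quoted result from Dervan--Sektnan \cite{DS2}, to motivate the study of fibration stability and adiabatic K-stability. The paper's own contributions (Theorems \ref{singular1}--\ref{ddd}) are algebro-geometric statements about $\mathfrak{f}$-stability and singularities, and none of the proofs in the body touch the differential-geometric existence result you are sketching. So there is no ``paper's own proof'' to compare against.

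That said, your outline is a faithful high-level summary of the actual strategy in \cite{DS2} (and of Fine's earlier work \cite{Fine} which it generalizes): the adiabatic expansion of the scalar curvature, the identification of the optimal symplectic connection equation as the first nontrivial obstruction living in the fiberwise automorphism directions, the twisted cscK equation on the base at the next order, and the implicit function theorem closing with $\delta$-weighted estimates on the degenerating linearization. If you intend to reconstruct the Dervan--Sektnan proof rather than the present paper's proof, your plan is on the right track, though the genuine work --- the precise form of the operator splitting relative to $C^\infty(E)$ for the bundle $E\to B$ of fiberwise holomorphy potentials, and the uniform lower bound on $\mathcal L_\delta$ --- is exactly where you flag it and would require substantial elaboration.
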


This is the definitive result for smooth fibrations. On the other hand, Jian-Shi-Song \cite{JSS} proved that smooth good minimal models (i.e., $K_X$ is semiample) have cscK metrics by the result of Chen-Cheng \cite{Ch} and J-stability (cf., \cite{SW}). Here, we emphasize that they treated not only smooth fibrations but Calabi-Yau fibrations admitting a singular fiber. Sj\"{o}str\"{o}m Dyrefelt \cite{Sj} and Song \cite{S} generalized independently the result of Jian-Shi-Song to the case when $X$ is a smooth minimal model (i.e., $K_X$ is nef) later. On the other hand, the author proved K-stability of klt minimal models in \cite{Hat}.

Dervan and Sektnan \cite{DS4} also conjectured that the existence of an optimal symplectic connection is equivalent to an algebro-geometric condition called fibration stability. Their theorem and conjecture predict that fibration stability and a certain stability of the base variety imply adiabatic K-stability (cf., Definition \ref{algfibk}) of the total space as follows.
\begin{conjj}[{\cite[1.3]{DS4}}]
Let $f:(X,H)\to (B,L)$ be a smooth fibration. If $f$ is fibration stable and $(B,L)$ has a twisted cscK metric with respect to the Weil-Petersson metric, then $(X,H)$ is adiabatically K-semistable i.e., $(X,\delta H+f^*L)$ is K-semistable for sufficiently small $\delta>0$.
\end{conjj}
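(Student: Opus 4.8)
The plan is to establish K-semistability by estimating the Donaldson-Futaki invariant of an arbitrary test configuration of $(X,\delta H+f^*L)$ and showing it is non-negative once $\delta$ is small. First I would fix a normal, ample test configuration $(\mathcal{X},\mathcal{L})$ for the total space and expand its Donaldson-Futaki invariant as a polynomial in $\delta$,
$$\mathrm{DF}_\delta(\mathcal{X},\mathcal{L})=\sum_{i=0}^{n}a_i(\mathcal{X})\,\delta^{i},$$
where $n=\dim X$ and the coefficients $a_i$ are intersection numbers on the compactified total space of the test configuration. The goal is then to show, using the hypotheses, that the $a_i$ are non-negative in order of increasing $i$, which forces $\mathrm{DF}_\delta\ge 0$ for all sufficiently small $\delta>0$, i.e.\ adiabatic K-semistability.

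The key structural input is a reduction to test configurations compatible with the fibration. Using the relative ampleness of $H$ together with flattening and normalized base change, I would first argue that it suffices to treat test configurations $(\mathcal{X},\mathcal{L})$ that map to a test configuration $(\mathcal{B},\mathcal{L}_B)$ of the base $(B,L)$ and degenerate the fibers of $f$ fiberwise. In this fibered setting the expansion of $\mathrm{DF}_\delta$ organizes into three pieces: a fiber contribution, governed by the K-semistability of the fibers of $f$ (the leading-order content of fibration stability, which presupposes that the fibers admit cscK metrics); a fibration contribution, which is by construction the invariant measured by fibration stability of $f$; and a base contribution, which is the twisted Donaldson-Futaki invariant of $(\mathcal{B},\mathcal{L}_B)$ relative to the Weil-Petersson class. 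The twisted cscK hypothesis on $(B,L)$ then yields, via a twisted analogue of the Berman-Darvas-Lu theorem, the non-negativity of the base contribution, while fibration semistability makes the fibration contribution non-negative and the fiberwise condition handles the remaining term.

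The hard part---and essentially the reason the conjecture is still open---will be the reduction to fibration-compatible test configurations: an arbitrary degeneration of $X$ need not respect the morphism $f$, and bounding the Donaldson-Futaki invariant of such an incompatible degeneration in the adiabatic limit demands delicate control of how the fibers can collapse or break as $\delta\to 0$. A second, more technical difficulty is the exact matching between the algebraically defined fibration stability and the analytic term in the $\delta$-expansion: one must check that the relevant coefficient $a_i$ agrees, up to positive normalization, with the fibration-stability invariant, a comparison into which the Weil-Petersson contribution enters nontrivially. Finally, passing from non-negativity of the three separate pieces to K-semistability of $(X,\delta H+f^*L)$ at a fixed small $\delta$ requires the estimates to be uniform over all test configurations, which is why a coercive (uniform) form of the hypotheses, rather than a purely termwise comparison, seems necessary.
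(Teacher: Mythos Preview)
The statement you are attempting to prove is a \emph{conjecture} in the paper, not a theorem: it is quoted from \cite{DS4} as an open problem, and the paper contains no proof of it. There is therefore nothing to compare your proposal against. Your write-up is itself not a proof but a strategic outline, and you explicitly acknowledge this when you identify the reduction to fibration-compatible test configurations as ``the reason the conjecture is still open.''

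That said, a few remarks on the strategy in light of what the paper does establish. First, the paper shows (see the remark after Theorem~\ref{singular1} and the discussion around Example~\ref{lcbaseex}) that $\mathfrak{f}$-stability alone does \emph{not} imply adiabatic K-semistability: there are $\mathfrak{f}$-stable rational elliptic surfaces over $\mathbb{P}^1$ that are adiabatically K-unstable. This confirms that the twisted cscK hypothesis on the base is genuinely needed, and that any proof must use it in an essential way rather than merely to control a ``base term.'' Second, your proposed $\delta$-expansion of $\mathrm{DF}_\delta$ and the decomposition into fiber/fibration/base pieces is close in spirit to the paper's $W_i$ decomposition (Definition~\ref{ds1}, Lemma~\ref{induction}), but the paper's analysis already shows that the higher-order terms $W_i$ for $i\ge 2$ are sensitive to singularities and lc centers of high codimension on the base, which are invisible to fibration stability in the sense of \cite{DS4} (only $W_0$ and $W_1$). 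So even after a successful reduction to fibration-compatible test configurations, controlling all coefficients in the $\delta$-expansion by fibration stability plus a base hypothesis is not automatic. Finally, your uniformity concern is well-placed: the paper's counterexamples arise precisely from test configurations (deformations to normal cones of high-codimension loci) for which the leading $W_i$ vanish, so a termwise non-negativity argument cannot succeed without a uniform estimate tying the vanishing orders together.
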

 If it was true, we could obtain a criterion for K-stability of fibrations. The condition on the base in the conjecture would be necessary. Indeed, it is known that twisted K-stability of the base is necessary for adiabatic K-stability of the total space by the work of Dervan-Ross \cite[Corollary 4.4]{DR2}.
  
In this paper, we introduce $\mathfrak{f}$-stability (cf., Definition \ref{ds1}) as a modification of fibration stability of Dervan-Sektnan \cite{DS4} and prove fundamental results on this. This is a stronger condition than fibration stability in numerical aspects. Dervan and Sektnan weakened the definition of fibration stability in \cite{DS4}. On the other hand, the original one in \cite{DS} is the condition of the positivity of $W_0$ and $W_1$ we will explain in the next page. Note that the original fibration stability coincides with $\mathfrak{f}$-stability for fibrations over curves such that each fiber is K-polystable. Since any polarized smooth curve is twisted K-stable, it is natural to conjecture that the original fibration stability implies existence of an optimal symplectic connection on fibrations over smooth curves rather than the new one. More generally, for a smooth fibration $f:(X,H)\to (B,L)$, if $B$ has a twisted cscK metric in the sense of the conjecture above and $X$ has an optimal symplectic connection, it is easy to see that $f$ is $\mathfrak{f}$-semistable by \cite[Theorem 1.2]{DS2}. Taking these facts into account, it is worth studying $\mathfrak{f}$-stability for K-stability of fibrations.
\vspace{0mm}

First, we introduce invariants $W_i$ as the Donaldson-Futaki invariant for $\mathfrak{f}$-stability. Similarly to results of \cite{GIToda} in K-stability, we establish the explicit formula to compute $W_i$ by taking general hyperplane sections of the base, and show that $\mathfrak{f}$-stability puts some restrictions on singularities by applying MMP. We obtain the following.

\begin{teiri}[Theorem \ref{singular}]\label{singular1}
Let $f:(X,\Delta,H)\to (B,L)$ be a polarized algebraic fiber space pair (cf., Definition \ref{algfib}). If $f $ is $\mathfrak{f}$-semistable, $(X,\Delta)$ has at most lc singularities.
\end{teiri}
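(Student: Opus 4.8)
The plan is to transplant Odaka's argument \cite{GIToda} for the implication ``K-semistable $\Rightarrow$ log canonical'' to the fibration setting, using the explicit formula for $W_i$ obtained by slicing the base with general hyperplanes as the bridge between the fibration invariants and Odaka's discrepancy computation. I argue by contradiction: assuming $(X,\Delta)$ is not log canonical, I produce a degeneration of the fibration that is admissible in the sense of Definition \ref{ds1} and whose leading invariant $W_0$ is negative, contradicting $\mathfrak f$-semistability.

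First I would isolate the non-log-canonical behaviour by the minimal model program. Taking a log resolution $\pi\colon Y\to X$ of $(X,\Delta)$, the hypothesis that the pair is not lc yields a prime divisor $E$ on $Y$ with log discrepancy $a_E(X,\Delta)<0$; equivalently a non-lc ideal $\mathcal I\subset\mathcal O_X$ detecting this place. Following Odaka, I form the deformation to the normal cone of a flag ideal built from (the pullback of) $\mathcal I$ on $X\times\mathbb A^1$. The essential point here is to arrange the construction to be compatible with $f$, so that it defines a semi-test configuration admissible for $\mathfrak f$-stability and $W_i$ is defined for it; since the flag ideal is supported over $X$ and $f\times\mathrm{id}$ is equivariant for the natural $\mathbb G_m$-scaling on $\mathbb A^1$, this compatibility should hold once $\mathcal I$ is chosen compatibly with the fibration.

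Next I would evaluate $W_0$ for this configuration by means of the hyperplane-section formula. Cutting $B$ with general members of $|L^{\otimes m}|$ reduces the intersection numbers defining $W_0$ to those of the induced configuration over a general complete-intersection slice of the base; over such a slice the fibration specializes to Odaka's deformation to the normal cone, and the leading invariant reduces to his Donaldson-Futaki computation. His discrepancy formula then expresses the sign of $W_0$ through $a_E(X,\Delta)$, and the strict negativity $a_E(X,\Delta)<0$ forces $W_0<0$, violating the required non-negativity and giving the contradiction. Hence $(X,\Delta)$ is log canonical.

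The main obstacle I expect is concentrated in the passage from the fibration invariant to Odaka's, and is twofold. On the one hand, one must verify that the degeneration produced remains admissible in the sense of Definition \ref{ds1} and that $W_0$ is genuinely computed by the slicing formula for a singular, non-flat configuration of normal-cone type. On the other hand, one must ensure that the non-log-canonical place survives the base slicing and is not washed out by integration over $B$, i.e.\ that no cancellation occurs between the contribution of $E$ and the remaining geometry. Localizing the construction near $f(\mathrm{Nlc}(X,\Delta))$ and invoking Bertini-type statements for log canonicity to relate the slice faithfully to $(X,\Delta)$ should resolve this, but pinning down the sign of the leading $W_0$ in the fibration normalization is the crux of the argument.
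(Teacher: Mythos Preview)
Your strategy has a genuine gap: you aim to make $W_0$ strictly negative, but $W_0$ is computed on the \emph{general} fibre of $f$. By Lemma~2.33 of \cite{DS4} (recalled before Lemma~\ref{cal2}),
\[
W_0(\mathcal{X},\mathcal{H})=\binom{n+m}{n}V(L)\cdot M^{\mathrm{NA}}(\mathcal{X}_b,\mathcal{H}_b)
\]
for general $b\in B$. If the non-lc locus of $(X,\Delta)$ lies over a proper closed subset of $B$ --- e.g.\ is contained in a single fibre --- then the deformation to the normal cone you build restricts to the \emph{trivial} test configuration over a general $b$, and $W_0=0$. No localization near $f(\mathrm{Nlc}(X,\Delta))$ or Bertini argument can repair this: the non-lc place simply does not meet the general fibre. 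Since $\mathfrak{f}$-semistability is a lexicographic condition on $(W_0,W_1,\dots,W_n)$, vanishing of $W_0$ is perfectly compatible with semistability; you must then examine $W_1$, and so on.

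The paper's proof does exactly this. One first takes the lc modification of $(X,\Delta)$ (Theorem~\ref{oxfh}), which gives a subscheme $Z$ with the property that \emph{every} Rees valuation of $Z$ has strictly negative log discrepancy --- this is stronger than picking a single divisor $E$, and is needed so that no Rees valuation contributes with the wrong sign to the entropy term. One then lets $k$ be the largest integer such that the restricted test configuration $(\mathcal{X}\cap D_1\cap\cdots\cap D_{n-k+1},\mathcal{H}|_{\cdots})$ is trivial for general $D_i\in|L|$; this forces $W_0=\cdots=W_{k-1}=0$ automatically. The formula of Lemma~\ref{induction} expresses $W_k$ via the non-Archimedean entropy, Ricci and Monge--Amp\`ere energies on the $(n-k)$-fold slice, and an adjunction argument (the Claim in the proof) shows that the Rees valuations of $Z\cap D_1\cap\cdots\cap D_{n-k}$ still have negative log discrepancy. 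The entropy term then dominates as $\epsilon\to 0$ (being order $\epsilon^r$ versus $\epsilon^{r+1}$ for the others), giving $W_k<0$. Your sketch contains the Odaka computation at the end of this chain, but misses the mechanism that selects the correct $k$ and the use of the lc modification to control all Rees valuations simultaneously.
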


We remark that $\mathfrak{f}$-stability does not imply adiabatic K-stability in general. Indeed, a rational elliptic surface with a section and a $II^*$, $III^*$ or $IV^*$-fiber is $\mathfrak{f}$-stable but adiabatic K-unstable over $\mathbb{P}^1$. We prove this fact in \cite{Hat2}. Thus, we can not apply Odaka's result \cite{GIToda} to Theorem \ref{singular1} directly.

  We can also show that a $\mathfrak{f}$-semistable flat Fano fibration is Kawamata log terminal (klt) if the base variety has only klt singularities. Moreover, as an application of $\mathfrak{f}$-stability to K-stability, we show that adiabatically K-semistable flat Fano fibrations over klt varieties have only klt singularities. 
  
 \begin{teiri}[Theorem \ref{Fanofib}]\label{Fanofib1}
Let $f:(X,H)\to (B,L)$ be a flat polarized algebraic fiber space. Suppose that there exist $\lambda\in \mathbb{Q}_{>0}$ and a line bundle $L_0$ on $B$ such that $H+f^*L_0\equiv-\lambda\, K_{X}$ and $B$ has only klt singularities. If $f$ is $\mathfrak{f}$-semistable, then $X$ has only klt singularities. In particular, if $f$ is adiabatically K-semistable (Definition \ref{algfibk}), then $X$ has only klt singularities.
\end{teiri}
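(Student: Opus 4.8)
The plan is to bootstrap from Theorem \ref{singular1}: that result, applied to the pair $(X,0)$, already shows that $\mathfrak{f}$-semistability of $f$ forces $(X,0)$ to be log canonical, and the hypothesis $H+f^*L_0\equiv -\lambda K_X$ guarantees that $X$ is normal and $K_X$ is $\mathbb{Q}$-Cartier. Hence it suffices to prove that $(X,0)$ has no log canonical center, i.e. that no prime divisor $E$ over $X$ has log discrepancy $a_E(X,0)=0$. I would argue by contradiction and construct, from such an lc place, a degeneration violating $\mathfrak{f}$-semistability; this is the fibered analogue of Odaka's proof that a K-semistable $\mathbb{Q}$-Fano variety is klt.

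First I would extract the lc place by the minimal model program. Assuming $(X,0)$ is lc but not klt, there is a projective birational morphism $\pi\colon Y\to X$ extracting a single divisor $E$ with $a_E(X,0)=0$ which is a Koll\'ar component over the center $Z=\pi(E)$, so that $(Y,E)$ is plt and $-E$ is $\pi$-ample. Because $-K_X$ is $f$-ample (it is $\tfrac{1}{\lambda}(H+f^*L_0)$ with $H$ $f$-ample and $f^*L_0$ $f$-trivial), the relative section ring $\bigoplus_m f_*\mathcal{O}_X(-mK_X)$ carries the decreasing filtration induced by $\mathrm{ord}_E$, and this filtration defines a degeneration of $f$ over $B$ fitting the framework of $\mathfrak{f}$-stability (Definition \ref{ds1}); equivalently one may take the deformation to the normal cone of the lc ideal of $Z$. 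The klt hypothesis on $B$ enters here to ensure that $Z$ is not forced by a log canonical center of the base, so that $E$ is a genuine place of the fibration and the associated degeneration is nontrivial and correctly normalized.

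The decisive step is the computation of the invariants $W_i$ for this degeneration. Here I would use the explicit hyperplane-section formula: cutting $B$ by general members of a large multiple of $L$ reduces $W_0$ and $W_1$ to fiberwise Donaldson--Futaki contributions over a zero-dimensional base, in which the polarization is proportional to the anticanonical class of the fibers. In this anticanonically polarized regime Odaka's discrepancy identity expresses the leading invariant as a multiple of $a_E(X,0)$, which vanishes for an lc place, so that $W_0=0$; the relative Fano normalization then forces the subleading invariant $W_1$ to be strictly negative, contradicting $\mathfrak{f}$-semistability. I expect the main obstacle to be exactly this sign analysis: Theorem \ref{singular1} only needs $a_E<0$ to make the leading term negative, whereas upgrading lc to klt lands on the boundary $W_0=0$ and therefore requires a genuinely sharper computation of $W_1$, together with a careful separation of the contributions of horizontal and vertical lc centers under the hyperplane-section reduction. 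The minimality of $Z$ as an lc center, Koll\'ar--Shokurov connectedness, and the klt-ness of $B$ should be what make this strict negativity unconditional.

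Finally, the last assertion is formal once one knows that adiabatic K-semistability (Definition \ref{algfibk}) implies $\mathfrak{f}$-semistability, which is a comparison of the leading coefficients of the adiabatic Donaldson--Futaki expansion in $\delta$ with the invariants $W_i$; granting this, the klt conclusion for adiabatically K-semistable $f$ is immediate from the main statement.
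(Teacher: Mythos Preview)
Your broad strategy---reduce to the lc case by Theorem \ref{singular1}, extract an lc place via MMP, build a deformation to the normal cone, and compute $W_i$---matches the paper. But there are two genuine gaps in how you propose to execute it.

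First, you misidentify where the klt hypothesis on $B$ enters. In the paper the argument bifurcates according to whether a given lc center $C$ is \emph{of fiber type} (meaning $\mathrm{codim}_X C\le \mathrm{codim}_B f(C)$) or not. The destabilizing computation handles only non-fiber-type centers and does \emph{not} use klt-ness of $B$ at all; klt($B$) is used in a completely separate elementary step (Lemma \ref{lemprop} and Proposition \ref{proplem}) to show that a flat $f$ over a klt base is smooth, hence klt, away from a set meeting no fiber component, so fiber-type lc centers simply do not exist. Your sketch folds klt($B$) into the destabilization (``to ensure $Z$ is not forced by a log canonical center of the base''), which is not how it is used and would not by itself rule out fiber-type centers.

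Second, the sign analysis you outline is not the one that works. After the hyperplane reduction one lands in the situation where the test configuration is nontrivial on $X$ but trivial on every general $|L|$-slice; this forces all exceptional centers to lie over points of $B$, and the non-fiber-type condition then gives the codimension bound $r>n$ for the minimal codimension $r$ of an exceptional center in $X$. By Proposition \ref{me} (using $H\equiv -\lambda K_X$ over $B$) the relevant invariant is $W_n = H^{\mathrm{NA}} - (I^{\mathrm{NA}}-(n+1)J^{\mathrm{NA}})$; the lc hypothesis makes $H^{\mathrm{NA}}=0$, and the codimension estimate $r>n$ yields the strict inequality $I^{\mathrm{NA}}-(n+1)J^{\mathrm{NA}}>0$ via the explicit leading term $\propto\bigl(1-\tfrac{n+1}{r+1}\bigr)$. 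Thus it is $W_n<0$ (or more precisely $W_k<0$ for the first nontrivial index $k$), not ``$W_0=0$, $W_1<0$,'' and the mechanism is the $I$--$J$ comparison driven by codimension, not a generic ``relative Fano normalization.'' Koll\'ar--Shokurov connectedness plays no role here. Your final paragraph on adiabatic K-semistability implying $\mathfrak f$-semistability is correct.
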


On the other hand, we prove that klt Calabi-Yau fibrations are $\mathfrak{f}$-stable similarly to the well-known theorem \cite[Theorem 2.10]{O2} in K-stability. Indeed, we show the following.

\begin{teiri}[Theorem \ref{kds}]\label{cc}
Let $f:(X,\Delta,H)\to (B,L)$ be a polarized algebraic fiber space pair with a line bundle $L_0$ on $B$. Suppose that $(X,\Delta,H)$ is a klt polarized pair and $K_X+\Delta\equiv f^*L_0$. Then $f$ is $\mathfrak{f}$-stable.
\end{teiri}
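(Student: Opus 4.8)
The plan is to transport Odaka's proof that a klt log Calabi--Yau pair is K-stable (\cite[Theorem 2.10]{O2}) into the fibration framework, using the explicit formula for the invariants $W_i$ obtained earlier in the paper. The guiding observation is that $K_X+\Delta\equiv f^*L_0$ forces the relative log canonical class $K_{X/B}+\Delta$ to be numerically pulled back from $B$; in particular, restricting to a general fiber $F$ yields $K_F+\Delta_F\equiv 0$, so every general fiber is itself a klt log Calabi--Yau pair. Thus I expect the leading invariant $W_0$ to be governed by the K-stability of the fibers (which holds by \cite[Theorem 2.10]{O2} applied fiberwise) and the next invariant $W_1$ to reduce, via the Calabi--Yau condition, to a relative log discrepancy whose sign is dictated by the klt hypothesis.

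First I would reduce to a base curve. By the hyperplane-section formula for $W_i$, it suffices to verify the defining inequalities of $\mathfrak{f}$-stability after restricting $f$ to a general complete intersection curve $C=H_1\cap\cdots\cap H_{\dim B-1}$ with $H_i\in|mL|$, $m\gg 0$. Since $X_C=f^{-1}(C)$ is cut out by pullbacks of general divisors from $B$, adjunction gives $K_{X_C}+\Delta_C\equiv f_C^*L_0'$ for some divisor $L_0'$ on $C$, and Bertini keeps $(X_C,\Delta_C)$ klt; hence the hypotheses persist and I may assume $\dim B=1$.

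Next, for a nontrivial fibration degeneration over $C$ I would take a normal, relatively proper model $\mathcal{X}\to C\times\mathbb{P}^1$ and expand $W_0,W_1$ as intersection numbers using the explicit formula. If the degeneration moves the fibers, fiberwise K-stability of the klt log Calabi--Yau fibers gives $W_0>0$. In the complementary case where the degeneration is fiberwise trivial, so that $W_0=0$, substituting $K_X+\Delta\equiv f^*L_0$ annihilates the slope and polarization contributions to $W_1$ and leaves a multiple of a relative log discrepancy $\bigl(K_{\mathcal{X}/C\times\mathbb{P}^1}+\mathcal{D}\bigr)\cdot(\cdots)$ over the central fiber, exactly as in Odaka's intersection-theoretic form of the Donaldson--Futaki invariant; the klt hypothesis then forces $W_1>0$ by the usual MMP/semipositivity argument, which is the lexicographic positivity defining $\mathfrak{f}$-stability.

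The main obstacle I anticipate is this final step: organizing the intersection numbers so that the cancellation forced by $K_X+\Delta\equiv f^*L_0$ isolates exactly a log discrepancy term to which semipositivity applies, and then upgrading nonnegativity to strict positivity on every nontrivial configuration. This is precisely where the klt (rather than merely lc, as in Theorem \ref{singular1}) assumption is indispensable, and where I must check that the reduction to a curve has not suppressed the fiberwise-trivial but globally nontrivial degenerations that $W_1$ is designed to detect.
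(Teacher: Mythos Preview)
Your reduction to a base curve is the gap. The definition of $\mathfrak{f}$-stability over an $n$-dimensional base involves the full list $W_0,\dots,W_n$, and restricting the fibration to a general complete intersection curve $C\subset B$ only controls $W_0$ and $W_1$. A fibration degeneration concentrated over a locus of codimension $\ge 2$ in $B$---for instance the deformation to the normal cone of a single fiber $X_{b_0}$---becomes the \emph{trivial} test configuration after restriction to a general curve, so its $W_0$ and $W_1$ vanish and your curve argument says nothing. Such degenerations are precisely what the higher invariants $W_2,\dots,W_n$ are designed to detect; your last paragraph anticipates a subtlety here, but the difficulty is not with $W_1$---it is that the reduction discards all of $W_2,\dots,W_n$ entirely.

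The paper's proof does not reduce $\dim B$. Instead, for a given normalized fibration degeneration $(\mathcal{X},\mathcal{H})$ it locates the critical index $k$: the largest integer such that the restriction $(\mathcal{X}\cap D_1\cap\cdots\cap D_{n-k+1},\mathcal{H}|_{\cdots})$ to $n-k+1$ general hyperplanes of $|ML|$ is trivial. Then $W_0=\cdots=W_{k-1}=0$ automatically, and Proposition~\ref{me} with $\lambda=0$ collapses $W_k$ to a constant multiple of the non-Archimedean entropy $H^{\mathrm{NA}}_{\Delta\cap D_1\cap\cdots\cap D_{n-k}}$ on the restriction. Since $(X,\Delta)$ is klt, so is the general complete intersection $(X\cap D_1\cap\cdots\cap D_{n-k},\Delta\cap D_1\cap\cdots\cap D_{n-k})$, and \cite[Proposition~9.16]{BHJ} gives $H^{\mathrm{NA}}>0$ on any nontrivial configuration---hence $W_k>0$. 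Your intuition that the Calabi--Yau condition annihilates everything except an entropy/log-discrepancy term is exactly right; the point is that this cancellation happens at level $k$, not at level $1$, and $k$ depends on the test configuration.
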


We explain the definition of $\mathfrak{f}$-stability briefly as follows. Suppose that $f:(X,H)\to (B,L)$ is a polarized algebraic fiber space. Let $m=\mathrm{rel.dim}\, f$ and $\mathrm{dim}\, B=n$. Roughly speaking, for any semiample test configuration $(\mathcal{X},\mathcal{H})$ for $(X,H)$, we define constants $W_0(\mathcal{X},\mathcal{H}), \cdots ,W_n(\mathcal{X},\mathcal{H})$ and a rational function $W_{n+1}(\mathcal{X},\mathcal{H})(j)$ in $j$ so that 
\[
V(H+jL)M^{\mathrm{NA}}(\mathcal{X},\mathcal{H}+jL)=W_{n+1}(\mathcal{X},\mathcal{H})(j) + \sum_{i=0}^n j^iW_{n-i}(\mathcal{X},\mathcal{H})
\]
where $\lim_{j\to\infty}W_{n+1}(\mathcal{X},\mathcal{H})(j)=0$ and $M^{\mathrm{NA}}$ is the non-Archimedean Mabuchi functional (cf., Definition \ref{nadef} and Notation in \S4). Then $f:(X,H)\to (B,L)$ is {\it $\mathfrak{f}$-semistable} if $$\sum_{i=0}^n j^iW_{n-i}(\mathcal{X},\mathcal{H})\ge 0$$ for sufficiently large $j>0$. We calculate $W_i$ in Lemma \ref{induction} as follows,
 \begin{align*}
W_{i} (\mathcal{X},\mathcal{H})&= \binom{n+m}{n-i} \Biggl(K^{\mathrm{log}}_{\mathcal{X}\cap D_1 \cap D_2 \cap \cdots \cap D_{n-i}/\mathbb{P}^1} \cdot \mathcal{H}_{|\mathcal{X}\cap D_1 \cap D_2 \cap \cdots \cap D_{n-i}}^{m+i}\\
&+\frac{S(X_b,H_b)}{m+i+1}\mathcal{H}_{|\mathcal{X}\cap D_1 \cap D_2 \cap \cdots \cap D_{n-i}}^{m+i+1} \Biggr)+\sum_{k=n-i+1}^n C_kJ^{\mathrm{NA}}(\mathcal{H}_{|\mathcal{X}\cap D_1 \cap D_2 \cap \cdots \cap D_{j}})
\end{align*}
where $C_k$ are constants and $D_k\in|L|$ are general elements. Then we prove Theorems \ref{singular1}, \ref{Fanofib1} by applying MMP results (cf., \cite[Theorem 1.1]{OX}, \cite{HX} and the adjunction formula \cite[\S16, \S17]{K+}) and Lemma \ref{induction}. On the other hand, we also show these theorems for deminormal pairs with boundaries in \S\S \ref{nnc}. More precisely,
\begin{teiri}[Theorem \ref{dingular}, Theorem \ref{dFanofib}]\label{ddd}
Suppose that $f:(X,\Delta,H)\to (B,L)$ is a polarized deminormal algebraic fiber space pair (Definition \ref{dalgfib}). 
\begin{enumerate}
\item If $f $ is $\mathfrak{f}$-semistable, $(X,\Delta)$ has at most slc singularities. 
\item Suppose that there exist $\lambda\in \mathbb{Q}_{>0}$ and a line bundle $L_0$ on $B$ such that $H+f^*L_0\equiv-\lambda\, (K_{X}+\Delta)$, and $f$ is $\mathfrak{f}$-semistable. Then any slc-center $C$ of $(X,\Delta)$ is of fiber type, i.e., $\mathrm{codim}_XC\le\mathrm{codim}_Bf(C)$ (Definition \ref{fibertype}).
\end{enumerate}
\end{teiri}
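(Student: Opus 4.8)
The plan is to reduce both assertions to the fibration-over-$\mathbb{P}^1$ computation of Lemma \ref{induction} and then to run the same minimal model program argument that proves the normal cases, Theorems \ref{singular1} and \ref{Fanofib1}, carrying the non-normal contributions along through the normalization and its conductor. In both parts I use that $\mathfrak{f}$-semistability forces the polynomial $\sum_{i=0}^n j^i W_{n-i}(\mathcal{X},\mathcal{H})$ to be nonnegative for $j\gg 0$; hence it suffices to exhibit, for a suitable semiample test configuration, a leading (top-degree in $j$) coefficient among the $W_{n-i}$ that is strictly negative.

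For (1), suppose $(X,\Delta)$ is not slc and let $\nu\colon \bar X\to X$ be the normalization, with $K_{\bar X}+\bar\Delta+\bar D=\nu^*(K_X+\Delta)$, where $\bar D$ is the conductor; by definition $(X,\Delta)$ is slc exactly when $(\bar X,\bar\Delta+\bar D)$ is lc. Let $Z$ be the non-slc locus and put $s=\mathrm{codim}_B\overline{f(Z)}$. Cutting the base by general members $D_1,\dots,D_{n-s}\in|L|$ as in Lemma \ref{induction}, a Bertini argument shows that general hyperplane sections preserve deminormality and the failure of lc-ness, so I am reduced to a deminormal fiber space over a curve whose normalized total space is non-lc along the preimage of a point. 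Extracting a divisor of log discrepancy $<0$ and building, via the log canonical modification of \cite{OX} together with the gluing of \cite{HX}, a semiample test configuration supported over $\overline{f(Z)}$, I get $W_0=\cdots=W_{s-1}=0$ while the discrepancy term $K^{\mathrm{log}}_{\mathcal{X}\cap D_1\cap\cdots\cap D_{n-s}/\mathbb{P}^1}\cdot\mathcal{H}^{m+s}$ in $W_s$ inherits the negativity coming from the discrepancy $<-1$, exactly as in Theorem \ref{singular1}. Thus the leading coefficient $j^{n-s}W_s$ of the $\mathfrak{f}$-semistability polynomial is negative, a contradiction.

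For (2), I argue by contradiction from an slc-center $C$ that is not of fiber type, so $c\coloneq\mathrm{codim}_X C>\mathrm{codim}_B f(C)\eqqcolon k$. After cutting $B$ by $\dim\overline{f(C)}=n-k$ general hyperplanes I may assume $\overline{f(C)}$ is a point and that $C$ restricts to an slc-center of positive codimension $c-k$ inside a single fiber. Using the Fano relation $H+f^*L_0\equiv-\lambda(K_X+\Delta)$, the slope $S(X_b,H_b)$ is proportional to $-\lambda K$, so in Lemma \ref{induction} the discrepancy term and the volume term $\tfrac{S(X_b,H_b)}{m+i+1}\mathcal{H}^{m+i+1}$ combine with a definite sign. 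Taking the extraction (deformation to the normal cone) of the lc place over $C$ furnished by \cite{OX}, \cite{HX} and computing by adjunction (\cite[\S16,\S17]{K+}), I find that the resulting test configuration is supported over $\overline{f(C)}$, so $W_0=\cdots=W_{k-1}=0$, while $W_k$ is strictly negative precisely when the codimension inequality is violated; the threshold for its vanishing is exactly $c=k$, i.e.\ $C$ being of fiber type (Definition \ref{fibertype}). Again the leading coefficient $j^{n-k}W_k$ is negative, contradicting $\mathfrak{f}$-semistability.

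The main obstacle in both parts is the bookkeeping of the non-normal contributions in the intersection-number computations: I must check that the conductor $\bar D$ does not spoil the sign of the $K^{\mathrm{log}}$-term after restriction to general hyperplane sections and after running the MMP, and that the gluing data of \cite{HX} is compatible with the $\mathbb{C}^*$-action on the test configuration, so that the invariants $W_i$ computed on $X$ genuinely coincide with those computed on $(\bar X,\bar\Delta+\bar D)$. In (2) there is the additional difficulty that the top coefficient $W_0$ vanishes, because the generic fiber is forced to be K-semistable, so the sign of $\mathfrak{f}$-semistability is decided by the lower-order coefficient $W_k$; isolating that coefficient and controlling its sign through the interplay of the Fano and codimension hypotheses is where the essential estimate lies.
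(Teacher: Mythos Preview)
Your outline follows the paper's strategy at a high level, but there are genuine gaps that prevent it from being a proof.

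The passage from $W_k^\Delta$ on the deminormal $(X,\Delta)$ to the computation on the normalization $(\bar X,\bar\Delta+\bar D)$ is not the tautology you treat it as. In the deminormal setting the invariants $W_k^\Delta$ are defined via $\mathrm{DF}_\Delta$, not $M^{\mathrm{NA}}$; one must first pass to the partial normalization of the test configuration and then base-change so that the central fibre is reduced before the identity $\mathrm{DF}_\Delta(\mathcal{X},\mathcal{H}+jL)=M^{\mathrm{NA}}_{\bar\Delta+\bar D}(\tilde{\mathcal{X}},\tilde{\mathcal{H}}+jL)$ holds. Even then, when $\bar X$ has several components the quantity $W_k$ does \emph{not} automatically split as a sum over components: this requires that all general fibres $(\widetilde{X_i})_{b_i}$ have the same mean scalar curvature $S$, which the paper establishes separately (Lemma~\ref{samescalar}) by showing that failure of this condition already produces $W_0<0$. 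Only after that can one invoke the decomposition (Lemma~\ref{duseful}) and feed each summand into the normal computation of Lemma~\ref{induction}. You flag this bookkeeping as an ``obstacle'' but never supply it; without it the claim that the $W_i$ on $X$ ``genuinely coincide'' with those on $(\bar X,\bar\Delta+\bar D)$ is unjustified. A small symptom of the missing argument: after cutting by $n-s$ hyperplanes the base has dimension $s$, not $1$, so ``reduced to a deminormal fiber space over a curve'' is wrong unless $s=1$.

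In part~(2) there is a second, sharper gap. You write ``taking the extraction of the lc place over $C$ furnished by \cite{OX}, \cite{HX}'', but the slc modification of \cite{OX} extracts \emph{all} divisors with $A\le 0$, not just those whose centre is $C$; some of the extracted divisors may well lie over fibre-type centres, and their contribution will pollute the sign of $W_k$. The paper resolves this with a dedicated construction (Lemma~\ref{darres}): perturb the boundary by a small multiple of a divisor through $C$ so that $(X,\Delta+\epsilon D)$ fails slc only along $C$, and then take the slc modification of the perturbed pair to obtain a subscheme $Z\subset C$ with $A_{(X,\Delta)}(v)=0$ for every $v\in\mathrm{Rees}(\nu^{-1}Z)$ and at least one $v$ dominating $C$. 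You also omit the separate treatment of codimension-one slc centres---components of the conductor and of $\lfloor\Delta\rfloor$ that are of non-fibre type---which cannot be handled by the higher-codimension extraction and require their own destabilising test configurations (this is the first two paragraphs of the paper's proof of Theorem~\ref{dFanofib}). Finally, the assertion that ``the threshold for vanishing of $W_k$ is exactly $c=k$'' is the content of the explicit computation $I^{\mathrm{NA}}-(n+1)J^{\mathrm{NA}}\sim\binom{N}{r}\bigl(1-\tfrac{n+1}{r+1}\bigr)\epsilon^{r+1}$ carried out in Theorem~\ref{genFanofib}; stating the conclusion without that estimate leaves the key inequality unproved.
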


Note that if $f$ is flat, $C$ is of fiber type iff $C$ contains an irreducible component of a fiber. Thus, Theorem \ref{Fanofib1} follows from Theorem \ref{ddd}. On the other hand, $\mathfrak{f}$-semistability is a weaker condition than adiabatic K-semistability. Therefore, we also obtain the following.
\begin{kei}[Corollary \ref{kei56}]
 Let $f:(X,\Delta,H)\to (B,L)$ be an adiabatically K-semistable polarized deminormal algebraic fiber space pair such that $H+f^*L_0\equiv-\lambda\, (K_{X}+\Delta)$ where $L_0$ is a line bundle on $B$ and $\lambda\in\mathbb{Q}_{>0}$. Then, $(X,\Delta)$ is slc and any slc-center of $(X,\Delta)$ is of fiber type.
\end{kei}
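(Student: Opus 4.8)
The plan is to deduce the Corollary directly from Theorem \ref{ddd} by showing that adiabatic K-semistability is a stronger condition than $\mathfrak{f}$-semistability. Once this single reduction is in place, both conclusions — that $(X,\Delta)$ is slc, and that every slc-center is of fiber type — are exactly the two parts of Theorem \ref{ddd} under the standing hypothesis $H+f^*L_0\equiv-\lambda(K_X+\Delta)$. So essentially all the content lies in the implication ``adiabatically K-semistable $\Rightarrow$ $\mathfrak{f}$-semistable'', and the remainder is a citation of Theorem \ref{ddd}.

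To prove that implication I fix an arbitrary semiample test configuration $(\mathcal{X},\mathcal{H})$ for $(X,H)$ and use the defining expansion
\[
V(H+jL)\,M^{\mathrm{NA}}(\mathcal{X},\mathcal{H}+jL)=W_{n+1}(\mathcal{X},\mathcal{H})(j)+\sum_{i=0}^{n}j^{i}W_{n-i}(\mathcal{X},\mathcal{H}),
\]
abbreviating the polynomial part as $P(j):=\sum_{i=0}^{n}j^{i}W_{n-i}(\mathcal{X},\mathcal{H})$. The first key observation is that, since K-semistability is invariant under rescaling the polarization by a positive rational number, the adiabatic K-semistability hypothesis (that $(X,\delta H+f^*L)$ is K-semistable for all small $\delta>0$) is equivalent to K-semistability of $(X,H+jf^*L)$ for all large $j>0$, via $j=1/\delta$. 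For a semiample test configuration the non-Archimedean Mabuchi functional dominates the Donaldson-Futaki invariant, so K-semistability forces $M^{\mathrm{NA}}(\mathcal{X},\mathcal{H}+jL)\ge 0$; multiplying by $V(H+jL)>0$ gives $W_{n+1}(\mathcal{X},\mathcal{H})(j)+P(j)\ge 0$ for all sufficiently large $j$. It then remains to pass from non-negativity of $W_{n+1}(j)+P(j)$ to non-negativity of $P(j)$ alone, which is precisely what $\mathfrak{f}$-semistability demands. Here I exploit that $P$ is a polynomial while $\lim_{j\to\infty}W_{n+1}(\mathcal{X},\mathcal{H})(j)=0$: if $P(j)<0$ for arbitrarily large $j$, then, being a nonzero polynomial, $P$ would be eventually of constant negative sign, so either $P(j)\to-\infty$ or $P\equiv c<0$, and in either case $-P(j)$ would be bounded below by a positive constant for large $j$, forcing $W_{n+1}(\mathcal{X},\mathcal{H})(j)\ge -P(j)$ to stay away from $0$ and contradicting $W_{n+1}(\mathcal{X},\mathcal{H})(j)\to 0$. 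Hence $P(j)\ge 0$ for all large $j$, i.e. $f$ is $\mathfrak{f}$-semistable, and Theorem \ref{ddd} applies.

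The main point to get right is not any deep geometry but the bookkeeping of this reduction. One must check that the test configurations used to test adiabatic K-semistability include the semiample ones entering the definition of the $W_i$, and that the sign conventions of \S4 genuinely yield $M^{\mathrm{NA}}(\mathcal{X},\mathcal{H}+jL)\ge 0$ from K-semistability of $(X,H+jf^*L)$: this is immediate if adiabatic K-semistability is phrased through the Mabuchi functional, and otherwise follows from the standard inequality $M^{\mathrm{NA}}\ge\mathrm{DF}$ valid for semiample polarizations. Once these normalizations are fixed, the polynomial positivity argument above is elementary, and the Corollary follows from Theorem \ref{ddd} with no further input.
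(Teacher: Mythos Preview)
Your overall strategy is exactly the paper's: reduce to Theorem \ref{ddd} (equivalently Theorem \ref{dFanofib}) via the implication ``adiabatically K-semistable $\Rightarrow$ $\mathfrak{f}$-semistable'', which the paper records in Remark \ref{trrem} and invokes in one sentence before Corollary \ref{kei56}. Your detailed justification of that implication by the polynomial/remainder argument is correct and is precisely what underlies the paper's remark.

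One correction of bookkeeping: since the Corollary is stated for a deminormal pair, the relevant definition of the $W_i^{\Delta}$ is Definition \ref{ds2}, where the expansion is of $V(H+jL)\,\mathrm{DF}_{\Delta}(\mathcal{X},\mathcal{H}+jL)$, not of $M^{\mathrm{NA}}$. This actually simplifies your argument: K-semistability gives $\mathrm{DF}_{\Delta}\ge 0$ directly, so no comparison with $M^{\mathrm{NA}}$ is needed. Your stated inequality ``$M^{\mathrm{NA}}\ge \mathrm{DF}$'' is in the wrong direction (the paper's Remark \ref{trrem} records $\mathrm{DF}-M^{\mathrm{NA}}=((\mathcal{X}_0-\mathcal{X}_{0,\mathrm{red}})\cdot(\mathcal{H}+jL)^{\dim X})\ge 0$); fortunately this inequality is not needed once you work with $\mathrm{DF}$ as in Definition \ref{ds2}.
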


\subsection*{Outline of this paper}
In \S2, we prepare many terminology and facts on K-stability and algebraic fiber spaces. In \S3, we recall results of Odaka \cite{GIToda} and Boucksom-Hisamoto-Jonsson \cite{BHJ} on the relationship between singularities and K-stability. From \S4, we state our original results. In \S4, we calculate $W_i$ and deduce the Theorem \ref{cc}. In \S\S5.1, we apply results of MMP and the computations in \S4 to obtain Theorem \ref{singular1} and a generalization (Theorem \ref{genFanofib}) of Theorem \ref{Fanofib1}. In \S\S 5.2, we extend theorems in \S\S 5.1 to the deminormal case.

\section*{Acknowledgements}
The author can not express enough his sincere and deep gratitude to Professor Yuji Odaka who is his research advisor for a lot of suggestive advices, productive discussions and reading the draft. The author also would like to thank Professor Ruadha\'{i} Dervan for helpful comments and warm encouragement. This work is a part of my master thesis.

\section{Notation}
\label{Notat}
In this paper, we work over $\mathbb{C}$. If $X$ is a {\it scheme}, we assume that $X$ is a scheme of finite type over $\mathbb{C}$ in this paper. If $X$ is a {\it variety}, we assume that $X$ is an irreducible, reduced and separated scheme. We follow the definitions of $\mathbb{Q}$-line bundles, $\mathbb{Q}$-Weil divsiors and $\mathbb{Q}$-Cartier divisors, and the notations of the $\mathbb{Q}$-linearly equivalence $\sim_{\mathbb{Q}}$ and the numerical equivalence $\equiv$ from \cite{Ha}, \cite{KoMo} and \cite{BHJ}. A pair $(X,L)$ is a {\it polarized scheme} if $X$ is a proper, reduced and equidimensional scheme over $\mathbb{C}$ and $L$ is an ample $\mathbb{Q}$-line bundle over $X$. A $\mathbb{Q}$-Weil divisor $\Delta$ such that there exist integral divisors $F_i$ different from each other, $a_i\in\mathbb{Q}$ and $n\in \mathbb{Z}_{\ge 0}$ such that $\Delta=\sum_{i=1}^n a_iF_i$ is a {\it boundary} if $K_{(X,\Delta)}\coloneq K_X+\Delta$ is $\mathbb{Q}$-Cartier on a normal variety $X$. Then, we call a pair $(X,\Delta)$ a (normal) log pair. Here, we do not assume that $\lceil\Delta\rceil=\sum\lceil a_i\rceil F_i$ is a reduced divisor. A $\mathbb{Q}$-divisor $\Delta=\sum_{i=1}^r a_i D_i$ on a smooth variety $X$ is {\it simple normal crossing} (snc) if each $\bigcap_{i\in J}D_i$ is smooth for any subset $J\subset \{1,2,\cdots,r\}$.

First, recall the definitions of deminormal pairs and semi log canonical singularities,
\begin{de}[deninormal pair {\cite[Chapter 5]{Ko}}]\label{definition1}
Let $ X$ be an equidimensional reduced scheme satisfying the Serre's condition $S_2$. $X$ is a {\it deminormal scheme} if any codimension 1 point of $X$ is smooth or nodal. A $\mathbb{Q}$-divisor $\Delta$ on $X$ is a boundary if $K_X+\Delta$ is $\mathbb{Q}$-Cartier and any irreducible component of $\Delta$ is not contained in the singular locus $\mathrm{Sing}(X)$. Then we call $(X,\Delta)$ a {\it deminormal log pair}.

Let $\nu:\tilde{X}\to X$ be the normalization. Then, $\mathfrak{d}=\mathrm{Hom}_{\mathcal{O}_{X}}(\nu_*\mathcal{O}_{\tilde{X}},\mathcal{O}_X)$ is an ideal in both of $\mathcal{O}_X$ and $\mathcal{O}_{\tilde{X}}$. We call the {\it conductors} of $X$ the closed subschemes defined by $\mathfrak{d}$ and we will denote these by $\mathfrak{cond}_X\subset X$ and $\mathfrak{cond}_{\tilde{X}}\subset \tilde{X}$ respectively. Note that $\mathfrak{cond}_{\tilde{X}}$ is a Weil divisor. 

Further, let $L$ be a $\mathbb{Q}$-ample line bundle on $X$. Then, we call $(X,\Delta,L)$ a {\it polarized deminormal (log) pair}.
\end{de}

\begin{de}[Log discrepancy]
Let $(X,\Delta)$ be a normal log pair and $v$ be a divisorial valuation on $X$. Suppose that $\sigma:Y\to X$ be a birational morphism such that there exist a positive constant $c>0$ and a prime divisor $F$ on $Y$ such that $v=c\,\mathrm{ord}_F$. Then the {\it log discrepancy} of $v$ with respect to $(X,\Delta)$ is
\[
A_{(X,\Delta)}(v)=v(K_Y-\sigma^*K_{(X,\Delta)})-c.
\] 
It is easy to see that the log discrepancy is independent of $\sigma$. We define $A_{(X,\Delta)}(v_{\mathrm{triv}})=0$ if $v_{\mathrm{triv}}$ is the trivial valuation.

We define $(X,\Delta)$ is 
\begin{itemize}
\item {\it sub Kawamata log terminal (subklt)} if $A_{(X,\Delta)}(v)>0$ for any non trivial divisorial valuation $v$.
\item {\it sub log canonical (sublc)} if $A_{(X,\Delta)}(v)\ge0$.
\end{itemize}
Let $c_X(v)$ be the center of $v$ on $X$. $c_X(v)$ is an lc center of $(X,\Delta)$ if $A_{(X,\Delta)}(v)=0$. $(X,\Delta)$ is klt (resp., lc) if $(X,\Delta)$ is subklt (resp., sublc) and $\Delta$ is effective. We also say that $(X,\Delta)$ has only klt (resp., lc) singularities.

Let $(V,B)$ be a deminormal log pair and $\nu:V'\to V$ be the normalization. Then, $(V,B)$ is {\it semi log canonical (slc)} if $(V',\nu^{-1}_*B+\mathfrak{cond})$ is lc.
\end{de}

Recall the notion of the minimal model program (MMP). We follow the fundamental notations of MMP in \cite{KoMo}.

\begin{de}[Minimal model]
Let $(X,\Delta)$ be a projective normal log pair over a quasi projective normal variety $S$ such that $\lceil\Delta\rceil$ is reduced. Let $Y$ be a projective normal variety over $S$. Then, a birational map $\phi:X\dashrightarrow Y$ is a {\it birational contraction} if there is no $\phi^{-1}$-exceptional divisor. Suppose that $K_Y+\phi_*\Delta$ is also $\mathbb{Q}$-Cartier. Then, $\phi$ is $(K_X+\Delta)$-non-positive (resp., $(K_X+\Delta)$-negative) if we have $p^*(K_X+\Delta)=q^*(K_Y+\phi_*\Delta)+E$ where $E$ is effective (resp., $E$ is effective and $\mathrm{Supp}\, E$ contains all $\phi^{-1}$-exceptional divisors). Here, $\Gamma$ is the resolution of singularities of the graph of $\phi$ and $p:\Gamma\to X$ and $q:\Gamma\to Y$ are canonical projections. Now $Y$ is
\begin{itemize}
\item a {\it weak log canonical model} (wlcm) of $(X,\Delta)$ if $K_Y+\phi_*\Delta$ is nef over $S$ and $\phi$ is $(K_X+\Delta)$-non-positive,
\item a {\it minimal model} (resp., {\it good minimal model}) of $(X,\Delta)$ if $K_Y+\phi_*\Delta$ is nef (resp., semiample) over $S$ and $\phi$ is $(K_X+\Delta)$-negative,
\item the {\it log canonical model} (lc model) if $K_Y+\phi_*\Delta$ is ample over $S$ and $Y$ is a wlcm.
\end{itemize}
\end{de}

 Next, recall K-stability. For example, see the detail in \cite{BHJ}.

\begin{de}[Test configuration]
Let $X$ be a proper scheme. Then, $\pi:\mathcal{X}\to \mathbb{A}^1$ is a {\it test configuration} for $X$ if $\pi:\mathcal{X}\to \mathbb{A}^1$ satisfies the following properties:
\begin{enumerate}
\item $\mathcal{X}$ has a $\mathbb{G}_m$-action. 
\item $\pi$ is a proper, flat and $\mathbb{G}_m$-equivariant morphism where $\mathbb{A}^1$ admits a canonical $\mathbb{G}_m$-action.
\item $\mathcal{X}_1\coloneq\pi^{-1}(1)\cong X$.
 \end{enumerate}
If there is no fear of confusion, we will denote $\pi:\mathcal{X}\to \mathbb{A}^1$ as $\mathcal{X}$ simply. Let $(X,L)$ be a polarized deminormal scheme. Then, $(\mathcal{X},\mathcal{L})$ is a (semi)ample test configuration for $(X,L)$ if
 \begin{enumerate}
\item $\mathcal{X}$ is a test configuration for $X$. 
\item $\mathcal{L}$ is a $\mathbb{A}^1$-(semi)ample $\mathbb{G}_m$-equivariant $\mathbb{Q}$-line bundle.
\item $\mathcal{L}|_{\mathcal{X}_1}=L$.
 \end{enumerate}

 $(X_{\mathbb{A}^1},L_{\mathbb{A}^1})=(X\times\mathbb{A}^1,L\times\mathbb{A}^1)$ with a trivial $\mathbb{G}_m$-action is a semiample test configuration. We call this the {\it trivial test configuration}. Note that any test configuration is birational. $(\mathcal{X},\mathcal{L})$ is isomorphic to $(X_{\mathbb{A}^1},L_{\mathbb{A}^1})$ in codimension 1, then we call $(\mathcal{X},\mathcal{L})$ almost trivial.
 
 $\mathcal{X}$ {\it dominates} $X_{\mathbb{A}^1}$ if there exists a birational morphism of test configurations $\rho_{\mathcal{X}}:\mathcal{X}\to X_{\mathbb{A}^1}$.
 
 $(\mathcal{X},\mathcal{L})$ is a {\it product} test configuration if $\mathcal{X}$ is isomorphic to $X\times \mathbb{A}^1$ as abstract varieties.
 
 For simplicity, the {\it central fiber} of $(\mathcal{X},\mathcal{L})$ is the fiber of $\pi$ over $0\in\mathbb{A}^1$ and is denoted as $(\mathcal{X}_0,\mathcal{L}_0)$. 
 
\end{de}

To define Donaldson-Futaki invariant, we need the following definition of the weight of $\mathbb{G}_m$-representation. Let $W$ be a finite dimensional $\mathbb{G}_m$-representation space over $\mathbb{C}$ and then $W$ has the unique weight decomposition
\[
W=\bigoplus_{k=-\infty}^\infty W_k
\]
where $\lambda\in\mathbb{G}_m$ acts on $v\in W_k$ in the way that $v\mapsto \lambda^kv$. Then, the {\it weight} of $W$ is
\[
\sum_k -k\,\mathrm{dim}\, W_k.
\]

\begin{de}[Donaldson-Futaki invariant, \cite{OS} Definition 3.2]
Let $(X,\Delta,L)$ be an $n$-dimensional polarized (demi)normal pair and $(\mathcal{X},\mathcal{L})$ be an ample test configuration. Let $w(m)$ be the weight of $H^0(\mathcal{X}_0,m\mathcal{L}_0)$. It is well-known that $w(m)=\sum_{i=0}^{n+1}b_im^{n+1-i}$ is a polynomial function in $m\gg0$ with $\mathrm{deg}\, w=n+1$ (see \cite[Theorem 3.1]{BHJ}). On the other hand, let $N_m=h^0(X,mL)=\sum_{i=0}^{n}a_im^{n-i}$ be the Hilbert polynomial. Then, the {\it Donaldson-Futaki invariant} of $(\mathcal{X},\mathcal{L})$ is 
\[
\mathrm{DF}(\mathcal{X},\mathcal{L})=2\frac{b_1a_0-a_1b_0}{a_0^2}.
\]
Moreover, let $\Delta_{\mathcal{X}}$ be the strict transformation of $\Delta\times\mathbb{A}^1$. Let $\hat{w}(m)=\sum_{i=0}^{n}\hat{b}_im^{n-i}$ be the weight polynomial of $H^0(\Delta_{\mathcal{X},0},m\mathcal{L})|_{\Delta_{\mathcal{X},0}}$ and $\hat{N}_m=h^0(\Delta,mL|_{\Delta})=\sum_{i=0}^{n-1}\hat{a}_im^{n-1-i}$ be the Hilbert polynomial. Then, the {\it log Donaldson-Futaki invariant} of $(\mathcal{X},\mathcal{L})$ is 
\[
\mathrm{DF}_{\Delta}(\mathcal{X},\mathcal{L})=\mathrm{DF}(\mathcal{X},\mathcal{L})+\frac{\hat{b}_0a_0-\hat{a}_0b_0}{a_0^2}.
\]
\end{de}

\begin{ntt}
In this paper, we write line bundles and divisors interchangeably. For example, $$L+mH=L\otimes H^{\otimes m}.$$ For simplicity, we will denote intersection products of line bundles or divisors as $$L^m\cdot (H+D)=L^{\cdot m}\cdot (H\otimes \mathcal{O}_X(D)).$$ 
\end{ntt}

We define the non-Archimedean functionals as in \cite{BHJ}.

\begin{de}[{\cite[\S 6, 7]{BHJ}}]\label{nadef}
Let $(X,\Delta,L)$ be an $n$-dimensional polarized normal pair and $\pi:(\mathcal{X},\mathcal{L})\to\mathbb{A}^1$ be a normal semiample test configuration. Let also $(\overline{\mathcal{X}},\overline{\mathcal{L}})$ be the {\it $\mathbb{G}_m$-equivariant compactification over $\mathbb{P}^1$} such that the $\infty$-fiber $(\overline{\mathcal{X}}_\infty,\overline{\mathcal{L}}_\infty)$ is $\mathbb{G}_m$-equivariantly isomorphic to $(X,L)$ with a trivial $\mathbb{G}_m$-action (cf., \cite[Definition 2.7]{BHJ}). Suppose that there exists a $\mathbb{G}_m$-equivariant morphism $\rho:\overline{\mathcal{X}}\to X_{\mathbb{P}^1}$ such that $\rho$ is the identity on $X\times\mathbb{P}^1\setminus0$. Here, $(X_{\mathbb{P}^1},L_{\mathbb{P}^1})$ is the $\mathbb{G}_m$-equivariant compactification of $(X_{\mathbb{A}^1},L_{\mathbb{A}^1})$. We call $\rho$ the {\it canonical} map. We also denote $\rho^*L_{\mathbb{P}^1}$ by $L_{\mathbb{P}^1}$. If
\begin{itemize}
\item $V=V(L)\coloneq L^n$,
\item $S(X,\Delta,L)=-\frac{n(K_{(X,\Delta)}\cdot L^{n-1})}{(L^n)}$ where $K_{(X,\Delta)}=K_X+\Delta$,
\item $K^{\mathrm{log}}_{(\overline{\mathcal{X}},\Delta_{\overline{\mathcal{X}}})/\mathbb{P}^1}=K_{(\overline{\mathcal{X}},\Delta_{\overline{\mathcal{X}}})/\mathbb{P}^1}+(\mathcal{X}_{0,\mathrm{red}}-\mathcal{X}_0)$ where $\mathcal{X}_{0,\mathrm{red}}$ is the reduced central fiber of $\pi$ and $\Delta_{\overline{\mathcal{X}}}$ is the strict transformation of $\Delta\times\mathbb{P}^1$ in $\overline{\mathcal{X}}$,
\item For any irreducible component of $\mathcal{X}_0$, the divisorial valuation $v_E$ on $X$ is the restriction of $b_E^{-1}\mathrm{ord}_E$ to $X$ where $b_E=\mathrm{ord}_E(\mathcal{X}_0)$ (cf., \cite[\S4]{BHJ}),
\end{itemize}
then
\begin{itemize}
\item the non-Archimedean Monge-Amp\`{e}re energy of $(\mathcal{X},\mathcal{L})$ is
\[
E^{\mathrm{NA}}(\mathcal{X},\mathcal{L})=\frac{\overline{\mathcal{L}}^{n+1}}{(n+1)V(L)},
\]
\item the non-Archimedean $I$-functional of $(\mathcal{X},\mathcal{L})$ is
\[
I^{\mathrm{NA}}(\mathcal{X},\mathcal{L})=V^{-1}(\overline{\mathcal{L}}\cdot L_{\mathbb{P}^1}^n)-V^{-1}(\overline{\mathcal{L}}- L_{\mathbb{P}^1})\cdot\overline{\mathcal{L}}^n,
\]
\item the non-Archimedean $J$-functional of $(\mathcal{X},\mathcal{L})$ is
\[
J^{\mathrm{NA}}(\mathcal{X},\mathcal{L})=V^{-1}(\overline{\mathcal{L}}\cdot L_{\mathbb{P}^1}^n)- E^{\mathrm{NA}}(\mathcal{X},\mathcal{L}),
\]
\item the $(\mathcal{J}^H)^\mathrm{NA}$-functional of $(\mathcal{X},\mathcal{L})$ is
\[
(\mathcal{J}^H)^{\mathrm{NA}}(\mathcal{X},\mathcal{L})=V^{-1}(H_{\mathbb{P}^1}\cdot \overline{\mathcal{L}}^{n})- V^{-1}(nH\cdot L^{n-1})E^{\mathrm{NA}}(\mathcal{X},\mathcal{L})
\]
where $H$ is a line bundle on $X$,
\item the non-Archimedean Ricci energy of $(\mathcal{X},\mathcal{L})$ is
\[
R_\Delta^{\mathrm{NA}}(\mathcal{X},\mathcal{L})=(\mathcal{J}^{K_{(X,\Delta)}})^{\mathrm{NA}}(\mathcal{X},\mathcal{L})-S(X,\Delta,L)E^{\mathrm{NA}}(\mathcal{X},\mathcal{L}),
\]
\item the non-Archimedean entropy of $(\mathcal{X},\mathcal{L})$ is (see \cite[Corollary 7.18]{BHJ})
\[
H_\Delta^{\mathrm{NA}}(\mathcal{X},\mathcal{L})=V^{-1}\sum_EA_{(X,B)}(v_E)(E\cdot\overline{\mathcal{L}}^n)=V^{-1}(K^{\mathrm{log}}_{(\overline{\mathcal{X}},\Delta_{\overline{\mathcal{X}}})/\mathbb{P}^1}-\rho^*K_{(X_{\mathbb{P}^1},\Delta_{\mathbb{P}^1})/\mathbb{P}^1})\cdot\overline{\mathcal{L}}^n
\]
where $E$ runs over the irreducible components of $\mathcal{X}_0$,
\item the non-Archimedean Mabuchi functional of $(\mathcal{X},\mathcal{L})$ is
\[
M_\Delta^{\mathrm{NA}}(\mathcal{X},\mathcal{L})=H_\Delta^{\mathrm{NA}}(\mathcal{X},\mathcal{L})+(\mathcal{J}^{K_{(X,\Delta)}})^{\mathrm{NA}}(\mathcal{X},\mathcal{L}).
\]
\end{itemize}
If there is no afraid of confusion, we denote $(\overline{\mathcal{X}},\overline{\mathcal{L}})$ as $(\mathcal{X},\mathcal{L})$.
Note that those functional are pullback invariant, if there exists a $\mathbb{G}_m$-equivariant morphism $\mu:\tilde{\mathcal{X}}\to\mathcal{X}$ of normal test configurations for $X$ such that $\mu$ is the identity on $X\times\mathbb{A}^1\setminus0$ and $F^{\mathrm{NA}}$ is one of the functionals as above, then 
\[
F^{\mathrm{NA}}(\mathcal{X},\mathcal{L})=F^{\mathrm{NA}}(\tilde{\mathcal{X}},\mu^*\mathcal{L}).
\]

On the other hand, a semiample test configuration $(\tilde{\mathcal{X}},\tilde{\mathcal{L}})$ {\it dominates} $(\mathcal{X},\mathcal{L})$ if there exists a $\mathbb{G}_m$-equivariant morphism $\mu:\tilde{\mathcal{X}}\to\mathcal{X}$ of normal test configurations for $X$ such that $\mu$ is the identity on $X\times\mathbb{A}^1\setminus0$ and $\tilde{\mathcal{L}}=\mu^*\mathcal{L}$. We define the set of all non-Archimedean positive metrics with respect to $(X,L)$ as follows,
\[
\mathcal{H}^{\mathrm{NA}}(L)=\{\mathrm{All}\, \mathrm{(semi)ample}\, \mathrm{test}\, \mathrm{configurations}\,\mathrm{for}\, (X,L)\}/\sim
\]
where $\sim$ is the equivalence relation generated by dominations. See also \cite[Proposition 2.17]{BHJ}. It is easy to see that $\mathcal{H}^{\mathrm{NA}}(L)$ is a set. Thus, the functional defined as above are well-defined on $\mathcal{H}^{\mathrm{NA}}(L)$. It is also easy to see that for any non-Archimedean positive metric $\phi\in\mathcal{H}^{\mathrm{NA}}(L)$ we can take a representative $(\mathcal{X},\mathcal{L})$ of $\phi$ such that $\mathcal{X}$ is normal and there exists a $\mathbb{G}_m$-equivariant morphism $\rho:\overline{\mathcal{X}}\to X_{\mathbb{P}^1}$ such that $\rho$ is canonical.
\end{de}

\begin{prop}[{\cite[Theorem 3.7]{OS}}, {\cite[Proposition 3.12]{BHJ}}]
Notations as in Definition \ref{nadef}. Let $(X,\Delta,L)$ be a polarized normal pair and $(\mathcal{X},\mathcal{L})$ be a normal semiample test configuration. Then, 
\[
\mathrm{DF}_{\Delta}(\mathcal{X},\mathcal{L})=V^{-1}(K_{(\overline{\mathcal{X}},\Delta_{\overline{\mathcal{X}}})/\mathbb{P}^1}-\rho^*K_{(X_{\mathbb{P}^1},\Delta_{\mathbb{P}^1})/\mathbb{P}^1})\cdot\overline{\mathcal{L}}^n+R^{\mathrm{NA}}_\Delta(\mathcal{X},\mathcal{L})+S(X,\Delta,L)E^{\mathrm{NA}}(\mathcal{X},\mathcal{L}).
\]
\end{prop}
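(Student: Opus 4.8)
\emph{Plan.} The plan is to first establish the closed intersection-theoretic formula
\begin{equation}
\mathrm{DF}_\Delta(\mathcal{X},\mathcal{L}) = V^{-1}\,K_{(\overline{\mathcal{X}},\Delta_{\overline{\mathcal{X}}})/\mathbb{P}^1}\cdot\overline{\mathcal{L}}^n + S(X,\Delta,L)\,E^{\mathrm{NA}}(\mathcal{X},\mathcal{L}),
\tag{$\ast$}
\end{equation}
in which $K_{(\overline{\mathcal{X}},\Delta_{\overline{\mathcal{X}}})/\mathbb{P}^1}=K_{\overline{\mathcal{X}}/\mathbb{P}^1}+\Delta_{\overline{\mathcal{X}}}$ denotes the ordinary (scheme-theoretic, \emph{not} log) relative log canonical divisor, and then to deduce the stated identity by a purely formal rearrangement of the functionals in Definition \ref{nadef}.

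To prove ($\ast$) I would compute the six coefficients entering $\mathrm{DF}_\Delta=2(b_1a_0-a_1b_0)/a_0^2+(\hat b_0a_0-\hat a_0b_0)/a_0^2$. Hirzebruch--Riemann--Roch on $X$ and on $\Delta$ gives the Hilbert coefficients $a_0=V/n!$, $a_1=-\tfrac{1}{2(n-1)!}K_X\cdot L^{n-1}$ and $\hat a_0=\tfrac{1}{(n-1)!}\Delta\cdot L^{n-1}$. The weight coefficients are read off from the equivariant Riemann--Roch (Knudsen--Mumford) expansion of $\pi_*\overline{\mathcal{L}}^{\otimes m}$ on $\mathbb{P}^1$ established in \cite[Theorem 3.1]{BHJ}; applied to $\overline{\mathcal{X}}$ and to $\Delta_{\overline{\mathcal{X}}}$ this yields $b_0=\overline{\mathcal{L}}^{n+1}/(n+1)!$, $\hat b_0=\overline{\mathcal{L}}^n\cdot\Delta_{\overline{\mathcal{X}}}/n!$ and, crucially, $b_1=\tfrac{1}{2n!}\,K_{\overline{\mathcal{X}}/\mathbb{P}^1}\cdot\overline{\mathcal{L}}^n$ with the \emph{relative} canonical divisor (the sign being fixed by the weight convention). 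Substituting, the ``$b_1$'' part of $\mathrm{DF}_\Delta$ produces $V^{-1}K_{\overline{\mathcal{X}}/\mathbb{P}^1}\cdot\overline{\mathcal{L}}^n-V^{-1}n(K_X\cdot L^{n-1})E^{\mathrm{NA}}$ while the boundary part produces $V^{-1}\Delta_{\overline{\mathcal{X}}}\cdot\overline{\mathcal{L}}^n-V^{-1}n(\Delta\cdot L^{n-1})E^{\mathrm{NA}}$; adding the two assembles $K_X+\Delta$ both inside the relative canonical and inside $S(X,\Delta,L)=-n(K_X+\Delta)\cdot L^{n-1}/V$, which is exactly ($\ast$).

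For the passage from ($\ast$) to the stated identity I would unwind the right-hand side. By Definition \ref{nadef}, $R^{\mathrm{NA}}_\Delta=(\mathcal{J}^{K_{(X,\Delta)}})^{\mathrm{NA}}-S(X,\Delta,L)E^{\mathrm{NA}}$ and $(\mathcal{J}^{K_{(X,\Delta)}})^{\mathrm{NA}}=V^{-1}((K_{(X,\Delta)})_{\mathbb{P}^1}\cdot\overline{\mathcal{L}}^n)-V^{-1}n(K_{(X,\Delta)}\cdot L^{n-1})E^{\mathrm{NA}}$. Since $S(X,\Delta,L)E^{\mathrm{NA}}=-V^{-1}n(K_{(X,\Delta)}\cdot L^{n-1})E^{\mathrm{NA}}$, the two scalar-curvature terms cancel and one is left with
\[
R^{\mathrm{NA}}_\Delta(\mathcal{X},\mathcal{L})=V^{-1}\,\rho^*K_{(X_{\mathbb{P}^1},\Delta_{\mathbb{P}^1})/\mathbb{P}^1}\cdot\overline{\mathcal{L}}^n,
\]
using that $(K_{(X,\Delta)})_{\mathbb{P}^1}=\rho^*K_{(X_{\mathbb{P}^1},\Delta_{\mathbb{P}^1})/\mathbb{P}^1}$ because the relative log canonical of the product $X_{\mathbb{P}^1}\to\mathbb{P}^1$ is the pullback of $K_{(X,\Delta)}$. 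Adding and subtracting this quantity in ($\ast$) turns ($\ast$) into the asserted formula.

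The main obstacle is the identification of the subleading weight coefficient $b_1$ with $\tfrac{1}{2n!}K_{\overline{\mathcal{X}}/\mathbb{P}^1}\cdot\overline{\mathcal{L}}^n$. This is the whole content of the equivariant Riemann--Roch analysis of \cite[Theorem 3.1]{BHJ}: one must relate the total weight of $H^0(\mathcal{X}_0,m\mathcal{L}_0)$ to the equivariant Euler characteristic of $\overline{\mathcal{L}}^{\otimes m}$ on $\overline{\mathcal{X}}$, pin down the overall sign dictated by the weight convention $\sum_k -k\,\dim W_k$, and verify that the contribution of $\pi^*K_{\mathbb{P}^1}$ together with the shift by the Hilbert polynomial cancel, so that only the \emph{relative} canonical $K_{\overline{\mathcal{X}}/\mathbb{P}^1}$ survives. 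One must also keep track of the fact that $\mathcal{X}_0$ may be non-reduced, so that the formula delivers the scheme-theoretic $K_{\overline{\mathcal{X}}/\mathbb{P}^1}$ rather than its log modification $K^{\mathrm{log}}_{\overline{\mathcal{X}}/\mathbb{P}^1}$; normality of $\mathcal{X}$ is what makes $K_{\overline{\mathcal{X}}/\mathbb{P}^1}$ a well-defined $\mathbb{Q}$-Cartier divisor and the intersection numbers meaningful. Once ($\ast$) is secured, the remaining steps are elementary bookkeeping with the definitions.
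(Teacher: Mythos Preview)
The paper does not supply its own proof of this proposition: it is quoted as a known result, with the proof delegated to the cited references \cite[Theorem 3.7]{OS} and \cite[Proposition 3.12]{BHJ}. Your proposal is correct and is precisely the argument of those references: the Knudsen--Mumford/equivariant Riemann--Roch expansion identifies the weight coefficients $b_0,b_1,\hat b_0$ as intersection numbers on the compactification $\overline{\mathcal{X}}$, the Hilbert coefficients $a_0,a_1,\hat a_0$ come from ordinary Riemann--Roch on $X$ and $\Delta$, and substituting into the definition of $\mathrm{DF}_\Delta$ gives the closed formula $(\ast)$; the passage from $(\ast)$ to the stated decomposition is, as you say, just the observation that $R_\Delta^{\mathrm{NA}}=V^{-1}\rho^*K_{(X_{\mathbb{P}^1},\Delta_{\mathbb{P}^1})/\mathbb{P}^1}\cdot\overline{\mathcal{L}}^n$ (this last identity is in fact recorded separately in the paper as Lemma \ref{dfnf}(2)). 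Your identification of the genuine content --- namely the sign and the appearance of the \emph{relative} canonical divisor in $b_1$ --- is accurate; there is nothing to add.
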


\begin{de}[K-stability]
Let $(X,\Delta,L)$ be a polarized deminormal pair. $(X,L)$ is 
\begin{itemize}
\item {\it K-semistable} if 
\[
\mathrm{DF}_{\Delta}(\mathcal{X},\mathcal{L})\ge0
\]
for any semiample test configuration,
\item {\it K-polystable} if $(X,\Delta,L)$ is K-semistable and
\[
\mathrm{DF}_{\Delta}(\mathcal{X},\mathcal{L})=0
\]
if and only if $(\mathcal{X},\mathcal{L})$ is isomorphic to a product test configuration in codimension 1,
\item {\it K-stable} if 
\[
\mathrm{DF}_{\Delta}(\mathcal{X},\mathcal{L})>0
\]
for any non-almost-trivial semiample test configuration,
\item {\it uniformly K-stable} if there exists a positive constant $\epsilon>0$ such that 
\[
\mathrm{DF}_{\Delta}(\mathcal{X},\mathcal{L})\ge \epsilon I^{\mathrm{NA}}(\mathcal{X},\mathcal{L})
\]
for any semiample test configuration.
\end{itemize} 

We remark that the non-Archimedean $I$ and $J$-functionals are norm in the following sense. The non-Archimedean $I$ and $J$-functionals are nonnegative and 
\[
J^{\mathrm{NA}}(\mathcal{X},\mathcal{L})=0
\]
if and only if the normalization of $(\mathcal{X},\mathcal{L})$ is trivial by \cite[Theorem 7.9]{BHJ}. They satisfy that 
\[
\frac{1}{n}J^{\mathrm{NA}}(\mathcal{X},\mathcal{L})\le I^{\mathrm{NA}}(\mathcal{X},\mathcal{L})-J^{\mathrm{NA}}(\mathcal{X},\mathcal{L})\le nJ^{\mathrm{NA}}(\mathcal{X},\mathcal{L})
\]
by \cite[Proposition 7.8]{BHJ}.
\end{de}

\begin{rem}
Dervan \cite{De2} introduced the minimum norm of test configurations defined by $I^{\mathrm{NA}}(\mathcal{X},\mathcal{L})-J^{\mathrm{NA}}(\mathcal{X},\mathcal{L})$ and also proved that this is a norm as the non-Archimedean $I$ and $J$-functionals.
\end{rem}

The following is well-known,

\begin{prop}[{\cite[Proposition 8.2]{BHJ}}]
Let $(X,\Delta,L)$ be a polarized normal pair. Then $(X,L)$ is 
\begin{itemize}
\item K-semistable if and only if
\[
M^{\mathrm{NA}}_{\Delta}(\mathcal{X},\mathcal{L})\ge0
\]
for any normal semiample test configuration,
\item K-stable if and only if
\[
M^{\mathrm{NA}}_{\Delta}(\mathcal{X},\mathcal{L})>0
\]
for any non-trivial normal semiample test configuration,
\item uniformly K-stable if and only if there exists a positive constant $\epsilon>0$ such that 
\[
M^{\mathrm{NA}}_{\Delta}(\mathcal{X},\mathcal{L})\ge \epsilon I^{\mathrm{NA}}(\mathcal{X},\mathcal{L})
\]
for any normal semiample test configuration.
\end{itemize} 
\end{prop}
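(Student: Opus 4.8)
The plan is to reduce all three equivalences to a single algebraic identity relating the log Donaldson--Futaki invariant and the non-Archimedean Mabuchi functional, and then to exploit how both sides transform under normalization and base change. Writing $\rho^{*}K$ as shorthand for $\rho^{*}K_{(X_{\mathbb{P}^1},\Delta_{\mathbb{P}^1})/\mathbb{P}^1}$, the first step is to establish, for every normal semiample test configuration, the identity
\[
\mathrm{DF}_{\Delta}(\mathcal{X},\mathcal{L})=M^{\mathrm{NA}}_{\Delta}(\mathcal{X},\mathcal{L})+V^{-1}\bigl((\mathcal{X}_0-\mathcal{X}_{0,\mathrm{red}})\cdot\overline{\mathcal{L}}^{n}\bigr).
\]
This is pure bookkeeping from the preceding Proposition: substituting $R^{\mathrm{NA}}_{\Delta}=(\mathcal{J}^{K_{(X,\Delta)}})^{\mathrm{NA}}-S(X,\Delta,L)E^{\mathrm{NA}}$ into the formula for $\mathrm{DF}_{\Delta}$ cancels the two $S\cdot E^{\mathrm{NA}}$ terms and leaves $V^{-1}(K_{(\overline{\mathcal{X}},\Delta_{\overline{\mathcal{X}}})/\mathbb{P}^1}-\rho^{*}K)\cdot\overline{\mathcal{L}}^{n}+(\mathcal{J}^{K_{(X,\Delta)}})^{\mathrm{NA}}$. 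Comparing with $M^{\mathrm{NA}}_{\Delta}=H^{\mathrm{NA}}_{\Delta}+(\mathcal{J}^{K_{(X,\Delta)}})^{\mathrm{NA}}$, where $H^{\mathrm{NA}}_{\Delta}=V^{-1}(K^{\mathrm{log}}_{(\overline{\mathcal{X}},\Delta_{\overline{\mathcal{X}}})/\mathbb{P}^1}-\rho^{*}K)\cdot\overline{\mathcal{L}}^{n}$, and inserting the definition $K^{\mathrm{log}}_{(\overline{\mathcal{X}},\Delta_{\overline{\mathcal{X}}})/\mathbb{P}^1}=K_{(\overline{\mathcal{X}},\Delta_{\overline{\mathcal{X}}})/\mathbb{P}^1}+(\mathcal{X}_{0,\mathrm{red}}-\mathcal{X}_0)$, the difference collapses to the stated correction term. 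Since $\mathcal{X}_0-\mathcal{X}_{0,\mathrm{red}}=\sum_E(b_E-1)E$ is effective and $\overline{\mathcal{L}}$ is nef, this term is nonnegative; hence $\mathrm{DF}_{\Delta}(\mathcal{X},\mathcal{L})\ge M^{\mathrm{NA}}_{\Delta}(\mathcal{X},\mathcal{L})$ always, with equality exactly when $\mathcal{X}_0$ is reduced.

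For the direction producing K-(semi)stability from positivity of $M^{\mathrm{NA}}_{\Delta}$, I would, given an arbitrary semiample test configuration, pass to its normalization $\tilde{\mathcal{X}}$; this does not increase $\mathrm{DF}_{\Delta}$ (a result of \cite{BHJ}) while $M^{\mathrm{NA}}_{\Delta}$, $I^{\mathrm{NA}}$ and $J^{\mathrm{NA}}$ are computed on the normalization and so are unchanged. The displayed inequality then yields $\mathrm{DF}_{\Delta}(\mathcal{X})\ge\mathrm{DF}_{\Delta}(\tilde{\mathcal{X}})\ge M^{\mathrm{NA}}_{\Delta}(\tilde{\mathcal{X}})$, which is $\ge 0$, $>0$, or $\ge\epsilon I^{\mathrm{NA}}$ as required; in the stable case one uses that a non-almost-trivial configuration has non-trivial normalization, which is exactly the norm property $J^{\mathrm{NA}}=0\Leftrightarrow$ trivial normalization recorded above.

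For the reverse direction I would use base change. Choosing $d$ divisible by every multiplicity $b_E=\mathrm{ord}_E(\mathcal{X}_0)$, the normalized base change $(\mathcal{X}^{(d)},\mathcal{L}^{(d)})$ along $z\mapsto z^{d}$ has reduced central fiber, so its correction term vanishes and, by the degree-one homogeneity of the non-Archimedean functionals under base change,
\[
\mathrm{DF}_{\Delta}(\mathcal{X}^{(d)},\mathcal{L}^{(d)})=M^{\mathrm{NA}}_{\Delta}(\mathcal{X}^{(d)},\mathcal{L}^{(d)})=d\,M^{\mathrm{NA}}_{\Delta}(\mathcal{X},\mathcal{L}).
\]
Thus K-semistability forces $M^{\mathrm{NA}}_{\Delta}(\mathcal{X},\mathcal{L})\ge 0$; since $I^{\mathrm{NA}}$ and $J^{\mathrm{NA}}$ scale by the same factor $d$, a uniform estimate $\mathrm{DF}_{\Delta}(\mathcal{X}^{(d)})\ge\epsilon I^{\mathrm{NA}}(\mathcal{X}^{(d)})$ descends to $M^{\mathrm{NA}}_{\Delta}(\mathcal{X})\ge\epsilon I^{\mathrm{NA}}(\mathcal{X})$, and for the stable case $J^{\mathrm{NA}}(\mathcal{X}^{(d)})=d\,J^{\mathrm{NA}}(\mathcal{X})>0$ shows $\mathcal{X}^{(d)}$ is non-almost-trivial, so $\mathrm{DF}_{\Delta}(\mathcal{X}^{(d)})>0$ yields $M^{\mathrm{NA}}_{\Delta}(\mathcal{X})>0$.

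The main obstacle is not the algebra of the first step but justifying the two base-change inputs: that normalizing the degree-$d$ base change genuinely makes the central fiber reduced when $b_E\mid d$ (so the non-homogeneous correction term disappears), and that $M^{\mathrm{NA}}_{\Delta}$, $I^{\mathrm{NA}}$ and $J^{\mathrm{NA}}$ scale by exactly $d$ while $\mathrm{DF}_{\Delta}$ does not. I would also need the auxiliary fact that normalization does not increase $\mathrm{DF}_{\Delta}$, which is where the semiampleness of $\mathcal{L}$ and the behaviour of the conductor enter. These are precisely the technical points handled in \cite{BHJ}, and I would cite them rather than reprove them.
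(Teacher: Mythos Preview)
The paper does not give its own proof of this proposition: it is quoted verbatim from \cite[Proposition~8.2]{BHJ} and left unproved. Your argument is correct and is exactly the standard one from \cite{BHJ}; indeed the paper itself records the two ingredients you isolate---the identity $\mathrm{DF}_\Delta-M^{\mathrm{NA}}_\Delta=V^{-1}\bigl((\mathcal{X}_0-\mathcal{X}_{0,\mathrm{red}})\cdot\overline{\mathcal{L}}^n\bigr)\ge 0$ appears in Remark~\ref{trrem}, and the base-change scaling $M^{\mathrm{NA}}_\Delta(\phi_d)=d\,M^{\mathrm{NA}}_\Delta(\phi)$ together with $\mathrm{DF}_\Delta(\phi_d)=M^{\mathrm{NA}}_\Delta(\phi_d)$ for sufficiently divisible $d$ is stated in Definition~\ref{rramamra} with a citation to \cite[Proposition~7.16]{BHJ}.
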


\begin{de}\label{rramamra}
For $\phi\in\mathcal{H}^\mathrm{NA}(L)$, we can define the new positive non-Archimedean metric $\phi_d$ for $d\in\mathbb{Z}_{\ge1}$ as follows. If $(\mathcal{X},\mathcal{L})$ is a representative of $\phi$, then $\phi_d$ is represented by $(\mathcal{X}^{(d)},\mathcal{L}^{(d)})$ that is the normalization of the base change $\mathcal{X}\times_{\mathbb{A}^1}\mathbb{A}^1$ via the $d$-th power map $\mathbb{A}^1\ni t\mapsto t^d\in\mathbb{A}^1$.

Note that $d\,M^{\mathrm{NA}}_{\Delta}(\phi)=M^{\mathrm{NA}}_{\Delta}(\phi_d)$ and $d\,\mathrm{DF}_{\Delta}(\phi)\ge\mathrm{DF}_{\Delta}(\phi_d)$ for any $d$. Moreover, for sufficiently divisible $d$, $M^{\mathrm{NA}}_{\Delta}(\phi_d)=\mathrm{DF}_{\Delta}(\phi_d)$ by \cite[Proposition 7.16]{BHJ}.
\end{de}
\begin{de}[Slope stability, {\cite[Definition 4.17]{RT07}}]
Let $(X,B,L)$ be a polarized log pair. For any non-void closed subscheme $Z\subset X$, {\it the deformation to the normal cone $\mathcal{X}$ of $Z$} is a test configuration for $X$ such that $\mathcal{X}$ is the blow up along $Z\times\{0\}$ (see also \cite[Chapter 5]{F}). Then $(X,B,L)$ is slope K-(semi)stable if 
\[
\mathrm{DF}_B(\mathcal{X},\mathcal{L})>0,\, (\mathrm{resp.,}\,\ge0)
\]
for any non-almost-trivial semiample test configuration such that $\mathcal{X}$ is a deformation to the normal cone of $Z$. 
\end{de}

We will use the following notation in \S \ref{DS1-stability}.
\begin{de}\label{dno}
A non-Archimedean positive metric $\phi\in\mathcal{H}^{\mathrm{NA}}(L)$ of $(X,L)$ is {\it normalized with respect to the central fiber} if we can take a representative $(\mathcal{X},\mathcal{L})$ of $\phi$ that satisfies the following,
\begin{enumerate}
 \item $(\mathcal{X},\mathcal{L})$ has a $\mathbb{G}_m$-equivariant dominant morphism $\rho: (\mathcal{X},\mathcal{L})\to (X_{\mathbb{A}^1},L_{\mathbb{A}^1})$. 
 \item The unique divisor $D=\mathcal{L}-\rho^*L_{\mathbb{A}^1}$ has the support $\mathrm{Supp}\,D\not \supset \hat{X}$ where $\hat{X}$ is the strict transformation of $X\times \{0\}$.
 \end{enumerate}
 
Whenever there is no fear of confusion, a non-Archimedean metric $\phi$ normalized with respect to the central fiber will be denoted by normalized non-Archimedean metric for short.
\end{de}
 
It follows immediately from the Definition \ref{dno} that
\begin{lem}\label{dfnf}
Let $(\mathcal{X},\mathcal{L})$ be a semiample test configuration over an $n$-dimensional polarized pair $(X,\Delta,L)$. Suppose that $(\mathcal{X},\mathcal{L})$ is normalized and there exists the birational morphism $\rho:\mathcal{X}\to X_{\mathbb{A}^1}$. Then the following hold,
\begin{itemize}
\item[(1)] $J^{\mathrm{NA}}(\mathcal{X},\mathcal{L})=-E^{\mathrm{NA}}(\mathcal{X},\mathcal{L})$.
\item[(2)] $R_\Delta^{\mathrm{NA}}(\mathcal{X},\mathcal{L})=\frac{1}{L^n}\rho^*K_{(X_{\mathbb{P}^1},\Delta_{\mathbb{P}^1})/\mathbb{P}^1}\cdot \mathcal{L}^n$.
\end{itemize}
\end{lem}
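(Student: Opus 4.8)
The plan is to reduce both identities to elementary intersection-theoretic computations on the $\mathbb{G}_m$-equivariant compactification $\overline{\mathcal{X}}$ over $\mathbb{P}^1$, using the canonical map $\rho:\overline{\mathcal{X}}\to X_{\mathbb{P}^1}$ (which extends the given birational morphism on $\mathbb{A}^1$, since the $\infty$-fiber is trivial), together with the structure of the divisor $D$ built into Definition \ref{dno}. First I would write $\overline{\mathcal{L}}=\rho^*L_{\mathbb{P}^1}+D$, where $D$ is the unique divisor supported on the central fiber $\mathcal{X}_0$. Since $\rho$ is the identity over $X\times(\mathbb{P}^1\setminus 0)$, every $\rho$-exceptional divisor lies in $\mathcal{X}_0$, and $\hat X$ (the strict transform of $X\times\{0\}$) is its only non-$\rho$-exceptional component. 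The normalization condition $\mathrm{Supp}\,D\not\supset\hat X$ therefore forces $D$ to be a combination of $\rho$-exceptional divisors only, so that $\rho_*D=0$. This is the single structural fact that drives part (1).

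For part (1), the plan is to show $\overline{\mathcal{L}}\cdot(\rho^*L_{\mathbb{P}^1})^n=0$, after which $J^{\mathrm{NA}}(\mathcal{X},\mathcal{L})=V^{-1}(\overline{\mathcal{L}}\cdot L_{\mathbb{P}^1}^n)-E^{\mathrm{NA}}(\mathcal{X},\mathcal{L})=-E^{\mathrm{NA}}(\mathcal{X},\mathcal{L})$ follows at once from the definition. Expanding $\overline{\mathcal{L}}\cdot(\rho^*L_{\mathbb{P}^1})^n=(\rho^*L_{\mathbb{P}^1})^{n+1}+D\cdot(\rho^*L_{\mathbb{P}^1})^n$, I would kill the first term because $L_{\mathbb{P}^1}$ is pulled back from $X$ along $p_1\colon X_{\mathbb{P}^1}=X\times\mathbb{P}^1\to X$, so $L_{\mathbb{P}^1}^{n+1}=0$ on $X_{\mathbb{P}^1}$ (as $\dim X=n$), and the projection formula (with $\rho$ birational) transports this vanishing to $\overline{\mathcal{X}}$. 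The second term vanishes by the projection formula together with $\rho_*D=0$. This is the only place where the normalization hypothesis is genuinely used.

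For part (2), I would expand $R_\Delta^{\mathrm{NA}}=(\mathcal{J}^{K_{(X,\Delta)}})^{\mathrm{NA}}-S(X,\Delta,L)E^{\mathrm{NA}}$ and observe that the two contributions proportional to $E^{\mathrm{NA}}$ cancel exactly: the term $-V^{-1}(nK_{(X,\Delta)}\cdot L^{n-1})E^{\mathrm{NA}}$ inside $(\mathcal{J}^{K_{(X,\Delta)}})^{\mathrm{NA}}$ is cancelled by $-S(X,\Delta,L)E^{\mathrm{NA}}=V^{-1}(nK_{(X,\Delta)}\cdot L^{n-1})E^{\mathrm{NA}}$, which is just the definition of $S(X,\Delta,L)$. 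What remains is $R_\Delta^{\mathrm{NA}}=V^{-1}\bigl((K_{(X,\Delta)})_{\mathbb{P}^1}\cdot\overline{\mathcal{L}}^n\bigr)$, where $(K_{(X,\Delta)})_{\mathbb{P}^1}=\rho^*p_1^*K_{(X,\Delta)}$. To finish, I would identify this with $\rho^*K_{(X_{\mathbb{P}^1},\Delta_{\mathbb{P}^1})/\mathbb{P}^1}$: since $X_{\mathbb{P}^1}$ is a product, $K_{X_{\mathbb{P}^1}}=p_1^*K_X+p_2^*K_{\mathbb{P}^1}$ and $\Delta_{\mathbb{P}^1}=p_1^*\Delta$, whence $K_{(X_{\mathbb{P}^1},\Delta_{\mathbb{P}^1})/\mathbb{P}^1}=p_1^*(K_X+\Delta)=p_1^*K_{(X,\Delta)}$. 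Pulling back by $\rho$ gives the required equality of $\mathbb{Q}$-divisor classes on $\overline{\mathcal{X}}$, and hence the stated formula (recalling $V=L^n$ and the convention $\mathcal{L}=\overline{\mathcal{L}}$). Notice that this part uses only the existence of $\rho$, not the full normalization condition.

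The calculations are routine once the set-up is fixed, so I do not expect a serious analytic obstacle. The step I would be most careful about is the bookkeeping around $D$ in part (1): confirming that $\hat X$ is indeed the unique non-$\rho$-exceptional component of $\mathcal{X}_0$, so that Definition \ref{dno}(2) really yields $\rho_*D=0$, and then applying the projection formula correctly to the pullback classes $(\rho^*L_{\mathbb{P}^1})^{n+1}$ and $D\cdot(\rho^*L_{\mathbb{P}^1})^n$.
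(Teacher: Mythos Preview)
Your proof is correct and is exactly the routine unwinding of Definition~\ref{dno} and the formulas in Definition~\ref{nadef} that the paper leaves implicit (the paper states the lemma ``follows immediately from Definition~\ref{dno}'' and gives no further argument). The key observation you isolate---that the normalization condition forces $D=\overline{\mathcal{L}}-\rho^*L_{\mathbb{P}^1}$ to be $\rho$-exceptional, hence $\rho_*D=0$, killing $\overline{\mathcal{L}}\cdot L_{\mathbb{P}^1}^n$---is precisely the intended content, and your handling of part~(2) via the cancellation of the $E^{\mathrm{NA}}$ terms is likewise the straightforward computation.
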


We are interested in K-stability of the following object,

\begin{de}\label{algfib}
An {\it algebraic fiber space} $f:X\to B$ is a surjective morphism between proper normal varieties with connected geometric fibers. If $F$ is the generic geometric fiber, the {\it relative dimension} $\mathrm{rel.dim}\, f$ of $f$ is $\mathrm{dim}\, F$. Note that $F$ is normal and connected.
If $f:X\to B$ is an algebraic fiber space and flat, we call $f$ a {\it flat algebraic fiber space}. If $f:X\to B$ is an algebraic fiber space, $H$ is an $f$-ample line bundle on $X$ and $L$ is an ample line bundle on $B$, then we denote $f:(X,H)\to (B,L)$ and call this a {\it polarized algebraic fiber space}. Moreover, if $\Delta$ is a boundary on $X$ and $f$ is as above, then we denote $f:(X,\Delta,H)\to (B,L)$ and call this a {\it polarized algebraic fiber space pair}.
\end{de}
\begin{de}[Adiabatic K-stability]\label{algfibk}
Let $f:X\to C$ be an algebraic fiber space, $\Delta$ be the boundary of $X$, $H$ be a $f$-ample line bundle and $L$ be an ample line bundle on $C$. Then $(X,\Delta,H)$ is 
{\it adiabatically K-(poly, semi)stable over} $(C,L)$ if $(X,\Delta,\epsilon H+f^*L)$ is K-(poly, semi)stable for sufficiently small $\epsilon>0$.
\end{de}

\begin{rem}
In the above definition, we name the above stability after the adiabatic limit technique Fine \cite{J2}, \cite{Fine} and Dervan-Sektnan \cite{DS4} used.
\end{rem}
We need the following notation about algebraic fiber spaces.
\begin{ntt}
Let $f:X\to B$ be an algebraic fiber space and $H$ and $M$ be a line bundle on $X$. $H$ and $M$ are $\mathbb{Q}$-linearly equivalent over $B$, $H\sim_{B,\mathbb{Q}}M$ if there exists a $\mathbb{Q}$-line bundle $L$ on $B$ such that $H\sim_{\mathbb{Q}}M+f^*L$. Furthermore, we will denote $H+L=H+f^*L$ if there is no fear of confusion.
\end{ntt}
For K-stability of deminormal varieties, we can calculate the Donaldson-Futaki invariant by taking the normalization. We need the following,

\begin{de}[Partially normal test configuration, {\cite[Definition 3.7]{Od}}]
Let $X$ be a deminormal scheme and $\mathcal{X}$ be a test configuration for $X$. Let also $\nu:\tilde{\mathcal{X}}\to\mathcal{X}$ be the normalization.  The {\it partially normalization} $\mathcal{X}'$ of $\mathcal{X}$ is $\mathcal{S}pec_{\mathcal{X}}(\mathcal{O}_{X\times(\mathbb{A}^1\setminus\{0\})}\cap \nu_*\mathcal{O}_{\tilde{\mathcal{X}}})$. Then, $\mathcal{X}$ is {\it partially normal} if $\mathcal{X}=\mathcal{X}'$. Note that $\mathcal{X}$ is partially normal iff $\mathcal{X}$ has no singular codimension 1 point over $0\in\mathbb{A}^1$ by \cite[Proposition 3.8]{Od}.
\end{de}

We recall the following fundamental result,

\begin{prop}[Proposition 5.1 of \cite{RT07}, Propositions 3.8 and 3.10 of \cite{Od}]
Let $(X,\Delta,L)$ be a polarized deminormal pair and $(\mathcal{X},\mathcal{L})$ be a semiample test configuration for $(X,L)$. Let also $(\tilde{\mathcal{X}},\tilde{\mathcal{L}})$ be the normalization and $(\mathcal{X}',\mathcal{L}')$ be the partially normalization. Here, $(\tilde{\mathcal{X}},\tilde{\mathcal{L}})$ is a semiample test configuration for the polarized deminormal pair $(\tilde{X},\Delta_{\tilde{X}},\nu^*L)$ where $\nu:\tilde{X}\to X$ is the normalization and $\Delta_{\tilde{X}}=\nu^{-1}_*\Delta+\mathfrak{cond}_{\tilde{X}}$. Then, 
\[
\mathrm{DF}_{\Delta_{\tilde{X}}}(\tilde{\mathcal{X}},\tilde{\mathcal{L}})=\mathrm{DF}_\Delta(\mathcal{X}',\mathcal{L}')\le\mathrm{DF}_\Delta(\mathcal{X},\mathcal{L}).
\] 
\end{prop}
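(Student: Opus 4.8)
The plan is to reduce everything to the leading coefficients of the relevant weight and Hilbert polynomials, since $\mathrm{DF}_\Delta$ depends only on $a_0,a_1,b_0,b_1$ together with the boundary coefficients $\hat a_0,\hat b_0$. First I would record the two finite birational morphisms $\tilde{\mathcal{X}}\to\mathcal{X}'\to\mathcal{X}$, all isomorphisms away from the central fibre, and pass to the $\mathbb{G}_m$-equivariant compactifications over $\mathbb{P}^1$ so that every sheaf below carries a weight decomposition. The Hilbert polynomial $N_m$ of $(X,L)$ (and the boundary Hilbert polynomial $\hat N_m$) is unchanged throughout, because the generic fibre and the boundary $\Delta$ on it are fixed; hence $a_0,a_1,\hat a_0$ are common to all three test configurations and only the weight polynomials $w(m)=\mathrm{wt}\,H^0(\mathcal{X}_0,m\mathcal{L}_0)$, its analogues for $\mathcal{X}'_0,\tilde{\mathcal{X}}_0$, and the boundary weights can vary.

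For the equality $\mathrm{DF}_{\Delta_{\tilde{X}}}(\tilde{\mathcal{X}},\tilde{\mathcal{L}})=\mathrm{DF}_\Delta(\mathcal{X}',\mathcal{L}')$ I would invoke the defining property of the partial normalization: $\nu:\tilde{\mathcal{X}}\to\mathcal{X}'$ is an isomorphism in codimension $1$ over $0\in\mathbb{A}^1$, so the two schemes differ only in codimension $\ge 2$. Consequently the weight polynomials agree modulo $O(m^{n-1})$, whence $b_0$ and $b_1$ coincide, and $\tilde{\mathcal{L}}=\nu^*\mathcal{L}'$. The only delicate point is the boundary bookkeeping: the log term for $\tilde{\mathcal{X}}$ carries the conductor $\mathfrak{cond}_{\tilde X}$ inside $\Delta_{\tilde X}=\nu^{-1}_*\Delta+\mathfrak{cond}_{\tilde X}$, whereas on $\mathcal{X}'$ the nodal codimension-$1$ locus is encoded in the non-normality of $\mathcal{X}'_0$ itself. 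I would check, via adjunction along the conductor, that the contribution of $\mathfrak{cond}_{\tilde X}$ to $\hat b_0$ on $\tilde{\mathcal{X}}$ reproduces exactly the defect in the weight of $H^0(\mathcal{X}'_0,m\mathcal{L}'_0)$ coming from the nodes, so that the two log Donaldson-Futaki invariants are numerically identical.

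For the inequality $\mathrm{DF}_\Delta(\mathcal{X}',\mathcal{L}')\le\mathrm{DF}_\Delta(\mathcal{X},\mathcal{L})$ I would compare $\mathcal{X}$ with its partial normalization through the $\mathbb{G}_m$-equivariant short exact sequence
\[
0\to \mathcal{O}_{\mathcal{X}}\to p_*\mathcal{O}_{\mathcal{X}'}\to \mathcal{Q}\to 0,
\]
where $p:\mathcal{X}'\to\mathcal{X}$ is the finite morphism and the cokernel $\mathcal{Q}$ is supported on the locus where $p$ fails to be an isomorphism, a subscheme concentrated over the central fibre. Twisting by $m\mathcal{L}$, taking $\mathbb{G}_m$-cohomology and reading off weights, the weight of $H^0(\mathcal{X}_0,m\mathcal{L}_0)$ differs from that of $H^0(\mathcal{X}'_0,m\mathcal{L}'_0)$ by a correction determined by $\mathcal{Q}$, and this correction enters $b_1$ (not $b_0$, since $\mathcal{Q}$ lives in codimension $\ge 1$). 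Dividing by $a_0^2>0$ then yields the asserted inequality once the correction is shown to have the correct sign. The main obstacle is precisely this sign: one must show that passing to the partial normalization can only lower the normalized subleading weight, which requires tracking the $\mathbb{G}_m$-action on the finite-length module $\mathcal{Q}$ and arguing that the weights accumulated along the non-normal locus are nonnegative, so that excising them decreases $b_1$. I expect the genuine work to lie here; by contrast the equality and the reduction to leading coefficients are essentially formal.
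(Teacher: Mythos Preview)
The paper does not actually supply a proof of this proposition: it is stated as a recalled result, with the proof delegated entirely to the cited references (Ross--Thomas \cite{RT07}, Proposition~5.1, and Odaka \cite{Od}, Propositions~3.8 and~3.10). So there is no ``paper's own proof'' to compare against; your proposal should be measured against those sources.

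Your outline tracks the standard argument in those references fairly closely. A couple of points deserve tightening. First, your assertion that $\tilde{\mathcal{X}}\to\mathcal{X}'$ makes the two schemes ``differ only in codimension $\ge 2$'' is not correct as stated when $X$ is genuinely deminormal: the horizontal conductor locus (the closure of $\mathfrak{cond}_{\tilde X}\times(\mathbb{A}^1\setminus\{0\})$) has codimension~$1$ in $\tilde{\mathcal{X}}$, and the map is not an isomorphism there. What is true, and what Odaka's Proposition~3.8 gives, is that $\mathcal{X}'$ has no \emph{vertical} singular codimension-$1$ points (i.e., none supported over $0\in\mathbb{A}^1$). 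You do then pivot to the conductor bookkeeping, which is the right fix; just be careful not to invoke a codimension-$\ge 2$ argument for the $b_0,b_1$ matching, since the equality $\mathrm{DF}_{\Delta_{\tilde X}}(\tilde{\mathcal{X}},\tilde{\mathcal{L}})=\mathrm{DF}_\Delta(\mathcal{X}',\mathcal{L}')$ really comes from the duality between $K_{\tilde X}+\mathfrak{cond}_{\tilde X}$ and $K_X$ under $\nu$ (this is the content of Odaka's Proposition~3.10), not from a naive dimension count.

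For the inequality, your short exact sequence with cokernel $\mathcal{Q}$ supported over the central fibre is exactly the Ross--Thomas mechanism; the sign is established there by observing that $\mathcal{Q}$ is a torsion $\mathcal{O}_{\mathbb{A}^1}$-module concentrated at $0$ whose $\mathbb{G}_m$-weights are nonpositive (in the sign convention used here), so removing them decreases the weight. You correctly flag this as the substantive step.
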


\section{Review of Basic Results on K-stability and singularities}
Let us recall results on the relationship between singularities and K-stability of \cite{GIToda} first. Next, we introduce the notion of $\mathfrak{f}$-stability, which is a modified version of fibration stability introduced by Dervan and Sektnan \cite{DS4}. Then, we prove in \S5 that $\mathfrak{f}$-stability puts restrictions on the singularities on $(X,\Delta)$ as K-stability does. Moreover, we apply our results on $\mathfrak{f}$-stability to study singularities on adiabatic K-semistable log Fano fibrations.

Recall that Odaka proved that K-stability puts the restrictions on the singularities on varieties. In this section, recall those remarkable results of \cite{GIToda} and the logarithmic generalizations \cite{BHJ}.
\begin{de}
Let $Z$ be a closed subscheme of a proper normal variety $X$ and $\mathfrak{a}_Z\subset\mathcal{O}_X$ be the coherent sheaf of ideals corresponding to $Z$. Let also $\pi:\tilde{X}\to X$ be the normalization of the blow up of $X$ along $Z$. We call $\pi:\tilde{X}\to X$ {\it normalized blow up} and call the Cartier divisor $D=\pi^{-1}(Z)$ corresponding to $\pi^{-1}(\mathfrak{a}_Z)$ the {\it inverse image} of $Z$. A divisorial valuation $v$ on $X$ is {\it Rees valuation} of $Z$ if $v=\frac{\mathrm{ord}_E}{\mathrm{ord}_E(D)}$ where $E$ is an irreducible component of the exceptional divisor of $\pi$. We will denote the set of Rees valuations of $Z$ by $\mathrm{Rees}(Z)$.
\end{de}

The following is the main theorem of \cite{GIToda},
\begin{thm}[{\cite[Theorems 1.2, 1.3]{GIToda}}, {\cite[\S9]{BHJ}}]\label{ox}
Let $(X,B,L)$ be a polarized deminormal pair such that $B$ is effective. Then the following hold,
\begin{enumerate}[(i)]
\item If $(X,B,L)$ is K-semistable, then $(X,B)$ has only slc singularities.
\item If $(X,B,L)$ is K-semistable and $L=-K_X-B$, then $(X,B)$ has only klt singularities.
\end{enumerate}
\end{thm}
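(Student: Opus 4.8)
Following Odaka \cite{GIToda} and the non-Archimedean reformulation of \cite{BHJ}, the plan is to argue by contraposition, reducing both statements to the construction of a destabilizing test configuration out of a divisorial valuation with non-positive log discrepancy. First I would pass to the normalization $\nu:\tilde X\to X$ with $\Delta_{\tilde X}=\nu_*^{-1}B+\mathfrak{cond}_{\tilde X}$: by definition $(X,B)$ is slc exactly when $(\tilde X,\Delta_{\tilde X})$ is lc, and under the anticanonical hypothesis of (ii) this property translates into klt. So it suffices to show that if $(\tilde X,\Delta_{\tilde X})$ fails to be lc (resp.\ klt) then $(X,B,L)$ admits a semiample test configuration with negative Donaldson-Futaki invariant; here the normalization inequality $\mathrm{DF}_{\Delta_{\tilde X}}(\tilde{\mathcal X},\tilde{\mathcal L})=\mathrm{DF}_B(\mathcal X',\mathcal L')\le\mathrm{DF}_B(\mathcal X,\mathcal L)$ recalled above guarantees that it is enough to build the configuration $\mathbb{G}_m$-equivariantly on $X$ and then track its (partial) normalization.

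Assume then that $(\tilde X,\Delta_{\tilde X})$ is not lc (the non-klt case being parallel). Then there is a divisorial valuation $v=\mathrm{ord}_E$ with $A_{(\tilde X,\Delta_{\tilde X})}(v)<0$ and center $Z=c_{\tilde X}(v)$. The key construction is the deformation to the normal cone of (a suitable power of) the ideal $\mathfrak a_Z$: blow up $X\times\mathbb A^1$ along $Z\times\{0\}$, normalize, and run a relative $(K+\Delta+\text{polarization})$-MMP over $\mathbb A^1$ so that the resulting normal semiample test configuration $(\mathcal X,\mathcal L)$ is normalized with respect to the central fiber in the sense of Definition \ref{dno}, with $\mathcal L=\rho^*L_{\mathbb A^1}-\epsilon D$ for small rational $\epsilon>0$ and with $E$ realized as a component of $\mathcal X_0$. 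The purpose of this choice is to force the entropy term $H^{\mathrm{NA}}_B(\mathcal X,\mathcal L)=V^{-1}\sum_{E'}A_{(X,B)}(v_{E'})(E'\cdot\overline{\mathcal L}^n)$ of Definition \ref{nadef} to inherit the sign of $A_{(\tilde X,\Delta_{\tilde X})}(v)$.

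Finally I would compute $M^{\mathrm{NA}}_B=H^{\mathrm{NA}}_B+(\mathcal J^{K_{(X,B)}})^{\mathrm{NA}}$ and invoke the characterisation that K-semistability is equivalent to $M^{\mathrm{NA}}_B\ge0$ on normal semiample test configurations. Expanding everything in the parameter $\epsilon$ (equivalently, scaling the ideal), the $(\mathcal J^{K_{(X,B)}})^{\mathrm{NA}}$-term and the $J^{\mathrm{NA}}$-type corrections are of strictly lower order, while the entropy contributes a term comparable to $A_{(\tilde X,\Delta_{\tilde X})}(v)$ times a positive intersection number, hence negative; thus $M^{\mathrm{NA}}_B<0$ for small $\epsilon$, contradicting K-semistability and proving (i). For (ii), the anticanonical relation $L=-K_X-B$ lets me use Lemma \ref{dfnf} together with the formula $\mathrm{DF}_B=V^{-1}(K_{(\overline{\mathcal X},B_{\overline{\mathcal X}})/\mathbb P^1}-\rho^*K_{(X_{\mathbb P^1},B_{\mathbb P^1})/\mathbb P^1})\cdot\overline{\mathcal L}^n+R^{\mathrm{NA}}_B+S\,E^{\mathrm{NA}}$ to rewrite the invariant so that it is governed by $A_{(X,B)}(v)$ plus a strictly positive energy term; then even a valuation with $A_{(X,B)}(v)=0$ (the lc-but-not-klt case) destabilizes, so K-semistability upgrades lc to klt.

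The main obstacle is the middle step: producing, via MMP, a test configuration whose central fiber is genuinely controlled by the single negative-discrepancy valuation $v$, and then rigorously isolating the leading term of $M^{\mathrm{NA}}_B$ so as to show that the non-negative $J$- and energy-type contributions cannot absorb the negative entropy. A secondary subtlety is the descent to the deminormal $X$: one must verify that the configuration built on $\tilde X$ (with the conductor incorporated into $\Delta_{\tilde X}$) actually arises as the normalization of an honest test configuration of $X$, so that the normalization inequality can be applied in the direction needed.
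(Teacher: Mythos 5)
Your skeleton --- contraposition, a deformation to the normal cone, descent to the deminormal $X$ via partial normalization, and an expansion in $\epsilon$ in which the entropy term dominates --- is the same as the paper's (which follows \cite{GIToda} and \cite[\S 9]{BHJ}). But the step you yourself flag as ``the main obstacle'' is a genuine gap, and it is precisely where the paper's proof has its content. Starting from a single divisorial valuation $v$ with $A_{(\tilde X,\Delta_{\tilde X})}(v)<0$ and deforming to the normal cone of its center $Z=c_{\tilde X}(v)$ does not work: by Theorem \ref{bhj48}, the divisorial valuations occurring in the central fiber of the normalized deformation to the normal cone are exactly the Rees valuations of $Z$, and these need not contain $v$, nor any negative-discrepancy valuation at all. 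Non-lc-ness can be invisible on the normalized blow-up of the reduced center (e.g.\ on a smooth surface with boundary two curves suitably tangent at a point, the first blow-up of the point has positive log discrepancy while a deeper valuation is negative); in that case the leading term of $H^{\mathrm{NA}}$ in your $\epsilon$-expansion is \emph{positive} and nothing destabilizes. Replacing $\mathfrak{a}_Z$ by a power changes neither the normalized blow-up nor the Rees valuations, so that does not help, and your proposed repair --- a relative MMP on the test configuration over $\mathbb{A}^1$ forcing the central fiber to be ``controlled by'' $v$ --- is not carried out and should not be expected to work for an arbitrary polarization $L$: that technique (cf.\ \cite{LX}) is tied to the anticanonical setting, where the polarization stays compatible with the MMP.

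The paper closes this gap by running MMP on $X$ itself, not on the test configuration: Theorem \ref{oxfh} (the lc/slc modification of \cite{OX}, \cite{FH}) and the Corollary following it produce, for a non-slc pair, a closed subscheme $Z\subset X$ --- given by an honest ideal of $\mathcal{O}_X$, so the deformation to the normal cone is automatically a test configuration for the deminormal $X$, which also settles your ``secondary subtlety'' --- such that \emph{every} valuation in $\mathrm{Rees}(\nu^{-1}Z)$ has negative log discrepancy. Only with this input does the expansion close: as in \cite[Proposition 9.12]{BHJ} (reproduced in greater generality in the proof of Theorem \ref{singular}), one has $R^{\mathrm{NA}}=O(\epsilon^{r+1})$ and $E^{\mathrm{NA}}=O(\epsilon^{r+1})$, while $H^{\mathrm{NA}}=-T\epsilon^{r}+O(\epsilon^{r+1})$ with $T>0$, where $r$ is the minimal codimension of the centers of the central-fiber valuations; the positivity of $T$ is exactly the statement that all leading discrepancies are negative. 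The same issue affects your part (ii): you need a subscheme \emph{all} of whose Rees valuations have zero log discrepancy (this is what Proposition \ref{parres} provides in the paper's later generalization), not a single lc-but-not-klt valuation; non-normality of $X$ is then covered because the conductor is an lc center of $(\tilde X,\Delta_{\tilde X})$.
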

 We will prove the generalization of this theorem in Theorems \ref{dingular} and \ref{dFanofib}. Let us recall the proof of this theorem briefly. First, we need the following result in the minimal model program, which is called the lc modification or slc modification,
 \begin{thm}[\cite{OX}, \cite{FH}]\label{oxfh}
 \begin{enumerate}[(i)]
 \item Let $(X,B)$ be a normal log pair such that $B$ is effective. If $f:(Y,\Delta_Y)\to X$ is a birational morphism such that $K_Y+\Delta_Y$ is $f$-ample and $(Y,\lceil\Delta_Y\rceil)$ is lc with $\Delta_Y=f_*^{-1}B+\sum E$ where $E$ runs over the irreducible and reduced $f$-exceptional divisors, then we call $f:(Y,\lceil\Delta_Y\rceil)\to X$ the lc modification of $(X,B)$ and the lc modification is unique up to isomorphism.
\item  Let $(X,B)$ be a normal log pair such that $B$ is effective and $\lceil B\rceil$ is reduced. Then, there exists the lc modification $f:(Y,\Delta_Y)\to X$ of $(X,B)$.
\item Let $(X,B)$ be a deminormal log pair such that $B$ is effective and $\lceil B\rceil$ is reduced. Then, there exists a morphism $f:(Y,\Delta_Y)\to X$  such that
\begin{itemize}
\item $f$ is projective and isomorphic in codimension 1,
\item $(Y,\Delta_Y)$ is slc for $\Delta_Y=f^{-1}_*B+E$ where $E$ is the sum of all reduced $f$-exceptional divisors,
\item $K_Y+\Delta_Y$ is $f$-ample.
\end{itemize} We call such $f:(Y,\Delta_Y)\to X$ the slc modification of $(X,B)$ and the slc modification is unique up to isomorphism.
\end{enumerate}
 \end{thm}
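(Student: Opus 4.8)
The plan is to treat the three parts in the order uniqueness (i), existence in the normal case (ii), and reduction of the deminormal case (iii) to the normal one. For (i), I would use that an lc modification is necessarily a relative ample model: if $f:(Y,\Delta_Y)\to X$ satisfies the listed properties then $K_Y+\Delta_Y$ is $f$-ample, so $Y\cong\mathrm{Proj}_X\bigoplus_{m\ge0}f_*\mathcal{O}_Y(m(K_Y+\Delta_Y))$. Given two such modifications $(Y,\Delta_Y)$ and $(Y',\Delta_{Y'})$, I would pass to a common log resolution $p:W\to Y$, $q:W\to Y'$ and write $K_W+\Delta_W=p^*(K_Y+\Delta_Y)+E=q^*(K_{Y'}+\Delta_{Y'})+E'$ with $E,E'\ge0$ exceptional over $X$ (both modifications carry the full reduced exceptional boundary, so the pullbacks are crepant up to an effective exceptional term). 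The negativity lemma applied to $p^*(K_Y+\Delta_Y)-q^*(K_{Y'}+\Delta_{Y'})=E'-E$, exactly as in the uniqueness of canonical models, then forces $E=E'$, identifies the two relative ample models, and gives $Y\cong Y'$ over $X$.

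For (ii), I would begin with a log resolution $g:W\to X$ of $(X,B)$ and set $\Delta_W=g_*^{-1}B+\sum_iE_i$, the sum running over all $g$-exceptional prime divisors. Since $\lceil B\rceil$ is reduced and $(W,\Delta_W)$ is log smooth, $(W,\Delta_W)$ is log canonical, and a discrepancy computation gives $K_W+\Delta_W\sim_{\mathbb{Q}}g^*(K_X+B)+\sum_iA_{(X,B)}(E_i)E_i$. The idea is then to run the relative $(K_W+\Delta_W)$-MMP over $X$ with scaling; it contracts precisely the exceptional divisors $E_i$ with $A_{(X,B)}(E_i)>0$ and terminates at a relative good minimal model, whose relative ample model $f:Y=\mathrm{Proj}_X\bigoplus_{m\ge0}g_*\mathcal{O}_W(m(K_W+\Delta_W))\to X$ is the desired lc modification: every surviving exceptional divisor becomes a coefficient-one boundary component of log discrepancy zero, so $(Y,\Delta_Y)$ with $\Delta_Y=f_*^{-1}B+\sum E$ is lc and $K_Y+\Delta_Y$ is $f$-ample.

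For (iii), I would reduce to (ii) by normalization and gluing. Let $\nu:\tilde{X}\to X$ be the normalization of the deminormal scheme $X$ and form the normal pair $(\tilde{X},\nu_*^{-1}B+\mathfrak{cond}_{\tilde{X}})$; part (ii) produces its lc modification $\tilde{f}:(\tilde{Y},\Delta_{\tilde{Y}})\to\tilde{X}$. The conductor carries the involution encoding how $\tilde{X}$ is glued back to $X$; by the uniqueness in (i) this involution lifts to a compatible involution of $(\tilde{Y},\Delta_{\tilde{Y}})$, and Kollár's gluing theory \cite{Ko} then descends $(\tilde{Y},\Delta_{\tilde{Y}})$ to a deminormal pair $(Y,\Delta_Y)$ together with a projective morphism $f:Y\to X$, isomorphic in codimension one, for which $(Y,\Delta_Y)$ is slc and $K_Y+\Delta_Y$ is $f$-ample. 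Uniqueness descends along the same gluing.

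The step I expect to be the main obstacle is the existence-and-termination assertion inside (ii): that the relative $(K_W+\Delta_W)$-MMP over $X$ actually runs and halts at a relative ample model for a merely log canonical --- not klt --- pair. This is equivalent to finite generation of the relative log canonical ring $\bigoplus_{m\ge0}g_*\mathcal{O}_W(m(K_W+\Delta_W))$, and it is precisely the content imported from \cite{OX} and the surrounding MMP machinery for lc pairs. Once it is granted, the uniqueness in (i) and the gluing reduction in (iii) are comparatively formal.
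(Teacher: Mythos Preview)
Your overall strategy matches the paper's: uniqueness via the ample-model/negativity-lemma argument, existence in the normal case by running a relative MMP over $X$ (the hard input being termination for lc pairs, imported from \cite{HX} and \cite{Fuj} as in \cite{OX}), and reduction of the deminormal case to the normal one via normalization and Koll\'ar's gluing.

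However, your step~(iii) skips the genuine content of the gluing argument. The involution you need to lift does not live on $\tilde{X}$; it is an involution $\tau$ of the \emph{normalized conductor} $(D^{\mathrm{n}},\mathrm{Diff}_D(\nu_*^{-1}B))$, and what must be shown is that $\tau$ lifts to the normalization $D'^{\mathrm{n}}$ of the strict transform $D'=\tilde f_*^{-1}\mathfrak{cond}_{\tilde X}$ inside $\tilde Y$. For this one needs, via adjunction, that
\[
(D'^{\mathrm{n}},\mathrm{Diff}_{D'}(\Delta_{\tilde Y}-D'))\longrightarrow (D^{\mathrm{n}},\mathrm{Diff}_D(\nu_*^{-1}B))
\]
is itself an lc modification in the sense of (i); only then does the uniqueness in (i) force $\tau$ to lift. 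The paper stresses that this is exactly why (i) is formulated without assuming $\lceil B\rceil$ is reduced: in general $\lceil\mathrm{Diff}_D(\nu_*^{-1}B)\rceil$ is \emph{not} reduced, so one cannot appeal to (ii) on the conductor. Your sentence ``by the uniqueness in (i) this involution lifts to a compatible involution of $(\tilde Y,\Delta_{\tilde Y})$'' hides this adjunction step and misidentifies the object the involution acts on; as written it is a gap, not a formality. Once you insert the different computation (as in \cite[4.4]{FH}) and invoke (i) in its stated generality, the rest of your (iii) goes through as in the paper.
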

 
 Let us recall the proof briefly. To prove (ii), we apply the results of \cite{HX} and \cite{Fuj} (see the detail in \cite{OX}). (i) follows from the argument of \cite[\S 4]{FH}. Here, note that we do not assume that $\lceil B\rceil$ is reduced. To prove (iii), we take the normalization $\nu:(X',B'+D)\to (X,B)$ where $D$ is the conductor divisor on $X'$. Then, there exists the lc modification $f':(Y',\Delta_Y')\to (X',B'+D)$ by (ii). Let $n:D^{\mathrm{n}}\to D$ be the normalization and $D'=f'^{-1}_*D$. In general, $\lceil\mathrm{Diff}_{D}(B')\rceil$ is not reduced. Here, $\mathrm{Diff}_{D}(B')$ is the different and see the definition in \cite[\S16]{K+}. However, if $n':D'^{\mathrm{n}}\to D'$ is the normalization,
\[
(D'^{\mathrm{n}},\mathrm{Diff}_{D'}(\Delta_Y-D'))\to (D^{\mathrm{n}},\mathrm{Diff}_{D}(B'))
\]
is the lc modification in the sense of (i) (cf., \cite[4.4]{FH}). Therefore, the involution of $D^{\mathrm{n}}$ lifts to $D'^{\mathrm{n}}$ and (iii) follows from Koll\'{a}r's gluing theorem \cite[Theorem 5.13]{Ko} (see the detail in \cite{OX} and \cite{FH}).

By Theorem \ref{oxfh}, we can conclude as follows. 
\begin{cor}
Let $(X,B)$ be a non-slc pair such that $B$ is effective and $\lceil B\rceil$ is reduced. Let also $\nu:(X',\nu_*^{-1}B+\mathfrak{cond})\to (X,B)$ be the normalization. Then, there exists a closed subscheme $Z$ on $X$ such that $A_{(X,B)}(v)\coloneq A_{(X',\nu_*^{-1}B+\mathfrak{cond})}(v)<0$ for $v\in\mathrm{Rees}(\nu^{-1}Z)$.
\end{cor}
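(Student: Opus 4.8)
The plan is to reduce the statement to the normal setting and then read off the subscheme $Z$ from the lc modification supplied by Theorem \ref{oxfh}(ii). First I would translate the hypothesis: by the very definition of slc, $(X,B)$ being non-slc means precisely that the normal log pair $(X',D')$ with $D'=\nu_*^{-1}B+\mathfrak{cond}$ is \emph{not} lc. Since $B$ is effective with $\lceil B\rceil$ reduced, and $\mathfrak{cond}_{\tilde X}$ is a reduced Weil divisor sharing no component with $\nu_*^{-1}B$, the divisor $D'$ is effective with $\lceil D'\rceil$ reduced. Hence Theorem \ref{oxfh}(ii) applies and produces the lc modification $f:(Y,\Delta_Y)\to X'$ with $\Delta_Y=f_*^{-1}D'+\sum_E E$ (the sum over the reduced $f$-exceptional prime divisors $E$), with $(Y,\lceil\Delta_Y\rceil)$ lc and $K_Y+\Delta_Y$ being $f$-ample; because $(X',D')$ is not lc, $f$ is not an isomorphism.

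Next I would extract the discrepancy information. Rewriting the modification in terms of log discrepancies gives
\[
K_Y+\Delta_Y=f^*(K_{X'}+D')+\sum_E A_{(X',D')}(\mathrm{ord}_E)\,E,
\]
the sum running over the $f$-exceptional prime divisors. Set $F:=\sum_E A_{(X',D')}(\mathrm{ord}_E)E$, which is $f$-ample since $K_Y+\Delta_Y$ is. Applying the negativity lemma (\cite{KoMo}) to $-F$—whose negative $F$ is $f$-nef and which pushes forward to $0$—shows $-F\ge0$, i.e. $A_{(X',D')}(\mathrm{ord}_E)\le0$ for every exceptional $E$. Choosing $m$ with $mF$ Cartier, the sheaf $\mathfrak{a}':=f_*\mathcal{O}_Y(mF)$ is an ideal of $\mathcal{O}_{X'}$ (as $F\le0$), and $f$-ampleness of $F$ means $Y$ is the normalized blow-up of $X'$ along $Z':=V(\mathfrak{a}')$ for $m\gg0$. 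Consequently $\mathrm{Rees}(Z')$ consists exactly of the $E$ occurring in the effective divisor $-F$, namely those with $A_{(X',D')}(\mathrm{ord}_E)<0$. Thus on the normal variety $X'$ I have produced a subscheme all of whose Rees valuations have negative log discrepancy.

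Finally I would descend $Z'$ to a subscheme $Z$ of $X$, taking $Z$ cut out by an ideal $\mathfrak{a}_Z\subset\mathcal{O}_X$ whose extension $\mathfrak{a}_Z\mathcal{O}_{X'}$ agrees with $\mathfrak{a}'$ over the locus where $\nu$ is an isomorphism (the normal locus of $X$), so that $\nu^{-1}Z$ coincides with $Z'$ there. Over that locus the Rees valuations of $\nu^{-1}Z$ are exactly those of $Z'$, hence have negative log discrepancy. The remaining—and I expect principal—obstacle is to control $\mathrm{Rees}(\nu^{-1}Z)$ over the conductor: where a bad divisor $E$ is centered on $\mathfrak{cond}_{\tilde X}$, the other sheets of $\nu$ through $\nu(c_{X'}(E))$ could a priori contribute Rees valuations of nonnegative log discrepancy.

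To dispose of this I would invoke the gluing involution $\tau$ on $\mathfrak{cond}_{\tilde X}$ together with the $\tau$-invariance of $\mathrm{Diff}$, the same compatibility underlying Koll\'ar's gluing theorem \cite[Theorem 5.13]{Ko} that is already used in the proof of Theorem \ref{oxfh}(iii). Choosing $\mathfrak{a}_Z$ to be $\tau$-adapted along the conductor, the local picture of $\nu^{-1}Z$ on a second sheet is carried by $\tau$ to that on the first, so any extra Rees valuation is a $\tau$-conjugate of one of the $E$ above and shares its negative log discrepancy. This gives $A_{(X,B)}(v)=A_{(X',\nu_*^{-1}B+\mathfrak{cond})}(v)<0$ for every $v\in\mathrm{Rees}(\nu^{-1}Z)$, as required; if instead one only needs the assertion for normal $X$ the last two paragraphs collapse, since then $\nu$ is an isomorphism and $Z=Z'$ works verbatim.
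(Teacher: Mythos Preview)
Your first two paragraphs are correct and coincide with the standard argument (cf.\ \cite[Proposition 9.9]{BHJ}): the lc modification of $(X',D')$ realises $X'$ as a normalized blow-up along some $Z'$, and the negativity lemma forces all Rees valuations of $Z'$ to have negative log discrepancy.

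The gap is in your descent. You try to manufacture $Z\subset X$ by hand so that $\nu^{-1}Z$ has the same Rees valuations as $Z'$, and you appeal to a ``$\tau$-adapted'' choice of ideal to control what happens over the conductor. This is not rigorous as stated: the involution $\tau$ lives only on the normalization $D^{\mathrm n}$ of the conductor divisor, not on $X'$, so it does not by itself transport Rees valuations between sheets; and ``$\tau$-adapted'' is not a well-defined construction of an ideal on $X$. What you would actually need is that the lc modification $Y\to X'$ is compatible with the gluing data along the conductor---i.e.\ that the induced map on $D'^{\mathrm n}$ intertwines $\tau$---and that is precisely the nontrivial content packaged into Theorem~\ref{oxfh}(iii) via Koll\'ar's gluing theorem.

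The paper's route avoids this entirely by using Theorem~\ref{oxfh}(iii) from the start: take the \emph{slc} modification $g:W\to X$ of $(X,B)$, let $Z\subset X$ be the closed subscheme such that $g$ is the blow-up along $Z$, and then observe (as in \cite[Lemma~3.1]{OX} and the discussion after Theorem~\ref{oxfh}) that the normalization $\tilde W\to X'$ is the lc modification of $(X',\nu_*^{-1}B+\mathfrak{cond})$, hence the normalized blow-up along $\nu^{-1}Z$. Now your own discrepancy/negativity argument from paragraph~2, applied to this lc modification, gives $A_{(X',\nu_*^{-1}B+\mathfrak{cond})}(v)<0$ for every $v\in\mathrm{Rees}(\nu^{-1}Z)$. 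In short: rather than building $Z'$ upstairs and trying to push it down, take $Z$ downstairs from the slc modification and pull it up.
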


We also remark the following theorem,
\begin{thm}[{\cite[Theorem 4.8]{BHJ}}]\label{bhj48}
Let $Z\subset X$ be a closed subscheme of a normal variety. Then, if $\mathcal{X}$ is the normalization of the deformation to the normal cone of $Z$, the set of Rees valuations of $Z$ coincides with the set of valuations $v_E$, where $E$ runs over the irreducible components of $\mathcal{X}_0$.
\end{thm}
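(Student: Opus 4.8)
The plan is to realize both families of valuations as restrictions of the Rees valuations of the ideal $\mathcal{I}=\mathfrak{a}_Z\cdot\mathcal{O}_{X\times\mathbb{A}^1}+(t)$ on $X\times\mathbb{A}^1$, and then to compute these by passing to the normalized blow up of $\mathfrak{a}_Z$ on $X$. First I would record the geometric setup: the deformation to the normal cone $\mathcal{M}=\mathrm{Bl}_{Z\times\{0\}}(X\times\mathbb{A}^1)$ is precisely the blow up of $\mathcal{I}$, so its normalization $\mathcal{X}$ is the normalized blow up of $\mathcal{I}$. Writing $\mathcal{I}\cdot\mathcal{O}_{\mathcal{X}}=\mathcal{O}_{\mathcal{X}}(-D)$ with $D$ an effective Cartier divisor, the support of $D$ lies in $\mathcal{X}_0=\{t=0\}$, and the irreducible components of $\mathcal{X}_0$ split into the strict transform $\hat{P}$ of $X\times\{0\}$, on which $\mathcal{I}$ restricts to a power of $(t)$ so that $\hat P\not\subset\mathrm{Supp}\,D$, and the components appearing in $D$, which are exactly the exceptional divisors of $\mathcal{X}\to X\times\mathbb{A}^1$ and hence carry the Rees valuations of $\mathcal{I}$.

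Next I would compute these Rees valuations. Let $\pi:\tilde X\to X$ be the normalized blow up of $\mathfrak{a}_Z$ with exceptional divisor $\tilde D=\sum_i r_iE_i$, so that $v_i\coloneq r_i^{-1}\mathrm{ord}_{E_i}$ are by definition the Rees valuations of $Z$. On $\tilde X\times\mathbb{A}^1$ the pullback of $\mathcal{I}$ is $\pi^*\mathcal{O}(-\tilde D)+(t)$, which near a general point of $E_i\times\{0\}$ is the monomial ideal $(x^{r_i},t)$, where $x$ is a local equation of $E_i$. Since normalized blow ups depend only on the integral closure of the ideal and compose well, the normalized blow up of $\mathcal{I}$ is dominated by, and after normalization agrees with, the composite of $\tilde X\times\mathbb{A}^1\to X\times\mathbb{A}^1$ with the normalized blow up of $(x^{r_i},t)$ along $E_i\times\{0\}$. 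A direct Newton-polygon computation for $(x^{r_i},t)$ shows it has a single Rees valuation, the monomial valuation $w_i$ with $w_i(x)=1$ and $w_i(t)=r_i$; this produces one exceptional divisor $F_i$ over $E_i\times\{0\}$ with $\mathrm{ord}_{F_i}(x)=1$ and $\mathrm{ord}_{F_i}(t)=r_i$, and no further exceptional components arise over $E_i\times\{a\}$ for $a\neq0$, since $\mathcal{I}$ is already invertible there.

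It then remains to match normalizations and restrictions. For each $F_i$ one has $b_{F_i}=\mathrm{ord}_{F_i}(t)=r_i=\mathrm{ord}_{F_i}(\mathcal{I})=\mathrm{ord}_{F_i}(D)$, so the normalization by $b_{F_i}$ used in the definition of $v_{F_i}$ coincides with the one defining the Rees valuation of $\mathcal{I}$; and restricting to $\mathbb{C}(X)$ gives $\mathrm{ord}_{F_i}|_X=\mathrm{ord}_{E_i}$, whence $v_{F_i}=r_i^{-1}\mathrm{ord}_{E_i}=v_i$. The remaining component $\hat P$ restricts to the trivial valuation on $\mathbb{C}(X)$ and is discarded. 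This yields a bijection between the nontrivial $v_E$ and the Rees valuations $v_i$ of $Z$, which is the assertion.

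The main obstacle I expect is the second step: justifying rigorously that the normalized blow up of $\mathcal{I}$ is computed by the two-stage process above, and that the local monomial model $(x^{r_i},t)$ contributes exactly one Rees valuation with the stated orders and no stray valuations centered elsewhere on the fiber over $t\neq0$. This is precisely where one must invoke the compatibility of normalized blow ups with passage to integral closures and carefully control the geometry away from $Z\times\{0\}$; once that is in place, the identity $b_{F_i}=\mathrm{ord}_{F_i}(D)=r_i$ and the restriction computation are formal.
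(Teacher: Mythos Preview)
The paper does not supply its own proof of this statement: Theorem~\ref{bhj48} is quoted verbatim from \cite[Theorem~4.8]{BHJ} and used as a black box in the arguments of \S\ref{Dsing}. So there is nothing in the present paper to compare your proposal against.

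That said, your outline is essentially the argument given in \cite{BHJ}. You correctly identify the deformation to the normal cone as $\mathrm{Bl}_{\mathcal{I}}(X\times\mathbb{A}^1)$ for $\mathcal{I}=\mathfrak{a}_Z+(t)$, pass to $\tilde X\times\mathbb{A}^1$ via the normalized blow up of $\mathfrak{a}_Z$, and then analyze the monomial ideal $(x^{r_i},t)$ locally along $E_i\times\{0\}$. The Newton polygon computation giving a single Rees valuation with weights $(1,r_i)$ is right, and the identification $b_{F_i}=r_i$, $\mathrm{ord}_{F_i}|_{\mathbb{C}(X)}=\mathrm{ord}_{E_i}$, hence $v_{F_i}=r_i^{-1}\mathrm{ord}_{E_i}$, is exactly what is needed. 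Your caveat about the two-stage blow up is well placed: the cleanest justification is that normalized blow ups depend only on the integral closure of the ideal, and that on $\tilde X\times\mathbb{A}^1$ the pullback $\pi^*\mathfrak{a}_Z+(t)$ and its integral closure are already invertible away from $\bigcup_i E_i\times\{0\}$, so no extra exceptional components can appear there. One small point worth making explicit in your write-up: the strict transform $\hat P$ of $X\times\{0\}$ is an irreducible component of $\mathcal{X}_0$ with $v_{\hat P}$ trivial, so the precise statement (as in \cite{BHJ}) is that the Rees valuations of $Z$ are the \emph{nontrivial} $v_E$; you handle this correctly by discarding $\hat P$.
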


Therefore, Theorem \ref{ox} follows from the calculation as \cite[Proposition 9.12]{BHJ}. We will prove in \S\ref{Dsing} that $\mathfrak{f}$-stability (we will define in \S \ref{DS1-stability} below) puts restrictions on the singularity as the above argument.
\section{Fibration stability after Dervan-Sektnan}\label{DS1-stability}
 In this section, we introduce the notion of $\mathfrak{f}$-stability that is a modified version of fibration stability, which is introduced by Dervan and Sektnan \cite{DS4}.
 Next, we show the fundamental results of $\mathfrak{f}$-stability (e.g., Theorem \ref{cc}) and compare $\mathfrak{f}$-stability with fibration stability.
 
First, recall the notion of fibration degeneration introduced by Dervan and Sektnan \cite{DS4} as test configuration in K-stability.
\begin{de}\label{dedede}
Let $\pi:(X,H)\to (B,L)$ be a polarized algebraic fiber space (cf., Definition \ref{algfib}). $\Pi:(\mathcal{X},\mathcal{H})\to B\times\mathbb{A}^1$ is a {\it fibration degeneration} for $\pi$ if the following hold,
\begin{itemize}
\item $(\mathcal{X},\mathcal{H}+j\Pi^*(L\otimes\mathcal{O}_{\mathbb{A}^1}))$ is an ample test configuration over $(X,H+j\pi^*L)$ for sufficiently large $j>0$,
\item $\Pi$ is $\mathbb{G}_m$-equivariant over $B\times\mathbb{A}^1$,
\item Let $\Pi_1$ be the restriction of $\Pi$ to the fiber over $1\in\mathbb{A}^1$. Then $\pi=\Pi_1$.
\end{itemize}
 
We will need a weaker concept which we call a {\it semi fibration degeneration} where $\mathcal{H}$ is relatively semiample over $B\times\mathbb{A}^1$.
\end{de}
 
This notion coincides with \cite[Definition 2.16]{DS4} due to [{\it loc.cit}, Lemma 2.15 and Corollary 2.24] and next proposition when $f$ is flat.
 
\begin{prop}\label{ds}
Let $\pi:(X,H)\to (B,L)$ be a polarized algebraic fiber space. If $(\mathcal{X},\mathcal{H})$ is a test configuration such that $(\mathcal{X},\mathcal{H}+j\pi^*L_{\mathbb{A}^1})$ is a semiample test configuration for sufficiently large $j$, where $\mathcal{X}$ dominates the trivial test configuration $X\times\mathbb{A}^1$ (hence also $B\times\mathbb{A}^1$), then there exists a {\it fibration degeneration} for $\pi$ dominated by $(\mathcal{X},\mathcal{H})$.
\end{prop}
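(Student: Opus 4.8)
The plan is to realise the desired fibration degeneration as a relative ample model of $\mathcal{H}$ over $B\times\mathbb{A}^1$. Since $\mathcal{X}$ dominates $X_{\mathbb{A}^1}=X\times\mathbb{A}^1$ via a $\mathbb{G}_m$-equivariant birational morphism $\rho_{\mathcal{X}}\colon\mathcal{X}\to X_{\mathbb{A}^1}$, composing with $\pi\times\mathrm{id}_{\mathbb{A}^1}$ yields a $\mathbb{G}_m$-equivariant morphism $\Pi\colon\mathcal{X}\to B\times\mathbb{A}^1$ with $\Pi_1=\pi$, so $\Pi$ already fulfils the equivariance and fibre conditions of Definition \ref{dedede}. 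Writing $p\colon\mathcal{X}\to\mathbb{A}^1$ for the structure map, $g\colon B\times\mathbb{A}^1\to\mathbb{A}^1$ for the projection and $A=\mathrm{pr}_B^*L$, we have $\Pi^*A=\pi^*L_{\mathbb{A}^1}$ and $A$ is $g$-ample. Because a line bundle is $\Pi$-ample if and only if adding $j\Pi^*A$ makes it $p$-ample for $j\gg0$, condition (1) of Definition \ref{dedede} is equivalent to $\Pi$-ampleness of $\mathcal{H}$; hence the only remaining task is to contract the $\mathcal{H}$-trivial curves contained in the fibres of $\Pi$.

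First I would fix one large $j_1$ and use the hypothesis that $\mathcal{M}\coloneq\mathcal{H}+j_1\Pi^*A$ is a $p$-semiample $\mathbb{Q}$-line bundle to form its associated contraction $c\colon\mathcal{X}\to\mathcal{X}'$ over $\mathbb{A}^1$, with $\mathcal{X}'$ normal, $p'\colon\mathcal{X}'\to\mathbb{A}^1$ the structure map, and $\mathcal{M}=c^*N$ for a $p'$-ample $N$ on $\mathcal{X}'$. The crucial claim is that $\Pi$ factors through $c$. Let $C$ be a curve contracted by $c$; it lies in a fibre of $p$, along which $\mathcal{H}+j\Pi^*A$ is $p$-nef for every large $j$. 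Comparing $\mathcal{M}\cdot C=0$ with $(\mathcal{H}+j_0\Pi^*A)\cdot C\ge0$ and $(\mathcal{H}+j_2\Pi^*A)\cdot C\ge0$ for $j_0<j_1<j_2$ forces $\Pi^*A\cdot C=0$; since $A$ restricts to the ample bundle $L$ on each fibre $B\times\{s\}$, this gives $\Pi_*C=0$. Thus every $c$-contracted curve is $\Pi$-contracted, so by rigidity $\Pi$ descends to a $\mathbb{G}_m$-equivariant morphism $\Pi'\colon\mathcal{X}'\to B\times\mathbb{A}^1$ with $\Pi=\Pi'\circ c$.

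Setting $\mathcal{H}'\coloneq N-j_1\Pi'^*A$ gives $c^*\mathcal{H}'=\mathcal{M}-j_1\Pi^*A=\mathcal{H}$. Restricting to a fibre of $\Pi'$ (which lies in a fibre of $p'$) kills $\Pi'^*A$ and leaves the $p'$-ample $N$, so $\mathcal{H}'$ is $\Pi'$-ample, i.e. $\mathcal{H}'+j\Pi'^*L$ is $p'$-ample for $j\gg0$. Over $\mathbb{A}^1\setminus\{0\}\cong\mathbb{G}_m$ the bundle $\mathcal{M}$ restricts to $H+j_1\pi^*L$, which is relatively ample because $H$ is $\pi$-ample, so $c$ is an isomorphism there; in particular $c$ is birational, $(\mathcal{X}',\mathcal{H}')$ is dominated by $(\mathcal{X},\mathcal{H})$ in the sense of test configurations, and the fibre of $\Pi'$ over $1$ recovers $\pi$ with $(\mathcal{X}'_1,\mathcal{H}'_1)\cong(X,H)$. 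Therefore $\Pi'\colon(\mathcal{X}',\mathcal{H}')\to B\times\mathbb{A}^1$ is the required fibration degeneration.

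The main obstacle is the factorisation step: one must confirm that the contraction attached to the single value $j_1$ annihilates only $\Pi$-vertical curves, which is exactly what the three-parameter nef comparison $j_0<j_1<j_2$ secures. Once the factorisation is in place, the descent of $\mathcal{H}$, the relative ampleness of $\mathcal{H}'$, and the bookkeeping of normality and $\mathbb{G}_m$-equivariance are all routine.
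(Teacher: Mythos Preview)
Your argument is correct and ultimately constructs the same object as the paper: the relative ample model of $\mathcal{H}$ over $B\times\mathbb{A}^1$. The paper reaches it more directly by first observing that since $\mathcal{H}+j\Pi^*L$ is semiample over $\mathbb{A}^1$ and $\Pi^*L$ is pulled back from the intermediate base $B\times\mathbb{A}^1$, the bundle $\mathcal{H}$ itself is already semiample over $B\times\mathbb{A}^1$ (relative global generation over $\mathbb{A}^1$ factors through any intermediate base). One then forms $\mathcal{Y}_k=\mathrm{image}\bigl(\mathcal{X}\to\mathcal{P}roj_{B\times\mathbb{A}^1}\bigoplus_{l\ge0}\mathrm{Sym}^l\Pi_*\mathcal{H}^k\bigr)$ for $k\gg0$ with polarization $\frac{1}{k}\mathcal{O}(1)$; this is the fibration degeneration, dominated by $(\mathcal{X},\mathcal{H})$ by construction. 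Your route---contract over $\mathbb{A}^1$ first, then use the nef comparison at three values $j_0<j_1<j_2$ to force $\Pi^*A\cdot C=0$ on contracted curves and invoke rigidity to factor $\Pi$ through $c$---gives exactly the same $\mathcal{X}'$, but the factorisation and the descent of $\mathcal{H}$ become automatic once one takes $\mathrm{Proj}$ directly over $B\times\mathbb{A}^1$. The gain of the paper's shortcut is brevity; the gain of your approach is that it makes explicit why only $\Pi$-vertical curves get contracted, which is implicit in the Proj construction.
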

\begin{proof}
By assumption, $(\mathcal{X},\mathcal{H}+j\pi^*L_{\mathbb{A}^1})$ is semiample for some large $j$ and hence $\mathcal{H}$ is semiample over $B\times\mathbb{A}^1$. Therefore, if $\Pi$ is a canonical morphism from $\mathcal{X}$ to $B\times\mathbb{A}^1$, $\Pi^*\Pi_*(\mathcal{H}^k)\twoheadrightarrow \mathcal{H}^k$ induces a morphism $f_k:\mathcal{X}\to \mathcal{P}roj_{B\times\mathbb{A}^1}(\bigoplus_{l\ge0}\mathcal{S}ym^l\Pi_*(\mathcal{H}^{k}))$ for sufficiently large $k$. If $(\mathcal{Y}_k,\frac{1}{k}\mathcal{O}_{\mathcal{P}roj}(1))$ is the image of $f_k$, this is a fibration degeneration for $\pi$ dominated by $(\mathcal{X},\mathcal{H})$.
\end{proof}

Then we define the almost triviality of fibration degenerations similarly to test cinfigurations.

\begin{de}
A semi fibration degeneration $(\mathcal{X},\mathcal{H})$ is {\it almost trivial} if $(\mathcal{X},\mathcal{H})$ is almost trivial as a test configuration.
\end{de}

Now, we can define $\mathfrak{f}$-stability as follows,

\begin{de}\label{ds1}
Suppose that $f:(X,\Delta,H)\to (B,L)$ is a polarized algebraic fiber space with a boundary $\Delta$. Let $m=\mathrm{rel.dim}\, f$ and $\mathrm{dim}\, B=n$. For any normal fibration degeneration $(\mathcal{X},\mathcal{H})$ for $f$, we define constants $W^{\Delta}_0(\mathcal{X},\mathcal{H}),\, W^{\Delta}_1(\mathcal{X},\mathcal{H}), \cdots ,W^{\Delta}_n(\mathcal{X},\mathcal{H})$ and a rational function $W^{\Delta}_{n+1}(\mathcal{X},\mathcal{H})(j)$ so that the partial fraction decomposition of $M^\mathrm{NA}_{\Delta}(\mathcal{X},\mathcal{H}+jL)$ in $j$ is as follows:
\[
V(H+jL)M^\mathrm{NA}_{\Delta}(\mathcal{X},\mathcal{H}+jL)=W^{\Delta}_{n+1}(\mathcal{X},\mathcal{H})(j) + \sum_{i=0}^n j^iW^{\Delta}_{n-i}(\mathcal{X},\mathcal{H}).
\]
 See also \cite[Lemma 2.25]{DS4}. Then $f:(X,\Delta,H)\to (B,L)$ is called
\begin{itemize}
\item {\it $\mathfrak{f}$-semistable} if $W^{\Delta}_0\ge 0$ and $$W^{\Delta}_0(\mathcal{X},\mathcal{H})=W^{\Delta}_1(\mathcal{X},\mathcal{H})=\cdots =W^{\Delta}_i(\mathcal{X},\mathcal{H})=0\Rightarrow W^{\Delta}_{i+1}(\mathcal{X},\mathcal{H})\ge0$$ for $i=0,1,\cdots ,n-1$ for any fibration degeneration. Equivalently, $f$ is $\mathfrak{f}$-semistable if $\sum_{i=0}^n j^iW^{\Delta}_{n-i}(\mathcal{X},\mathcal{H})\ge 0$ for sufficiently large $j>0$.
\item {\it $\mathfrak{f}$-stable} if $f$ is $\mathfrak{f}$-semistable and $$W^{\Delta}_i(\mathcal{X},\mathcal{H})= 0,\, i=0,1,\cdots ,n-1\, \Rightarrow W^{\Delta}_n(\mathcal{X},\mathcal{H})>0$$ for any non-almost-trivial fibration degeneration of $(X,H)$.
\end{itemize}
Note that $f$ is
\begin{itemize}
\item $\mathfrak{f}$-semistable if and only if $W^{\Delta}_0\ge 0$ and $W^{\Delta}_0(\mathcal{X},\mathcal{H})=W^{\Delta}_1(\mathcal{X},\mathcal{H})=\cdots =W^{\Delta}_i(\mathcal{X},\mathcal{H})=0\Rightarrow W^{\Delta}_{i+1}(\mathcal{X},\mathcal{H})\ge0$ for $i=0,1,\cdots ,n-1$ for any test configuration $(\mathcal{X},\mathcal{H})$ dominating $X_{\mathbb{A}^1}$ such that $(\mathcal{X},\mathcal{H}+jL)$ is a semiample test configuration for sufficiently large $j$.
\item $\mathfrak{f}$-stable if and only if $f$ is $\mathfrak{f}$-semistable and $W^{\Delta}_i(\mathcal{X},\mathcal{H})=0,\, i=0,1,\cdots ,n-1\, \Rightarrow W^{\Delta}_n(\mathcal{X},\mathcal{H})>0$ for any non almost trivial test configuration $(\mathcal{X},\mathcal{H})$ for $(X,H)$ dominating $X_{\mathbb{A}^1}$ such that $\mathcal{H}+jL$ is semiample for sufficiently large $j$.
\end{itemize}
This follows immediately from Proposition \ref{ds}. If $\Delta=0$, we will denote $W^{\Delta}_i=W_i$.
\end{de}
 
We remark the following trivial lemma.
 
\begin{lem}\label{trlem}
Let $f:(X,\Delta,H)\to (B,L)$ be a polarized algebraic fiber space pair. Then $f:(X,\Delta,H)\to (B,L)$ is $\mathfrak{f}$-(semi)stable if and only if so is $f:(X,\Delta,H+jL)\to (B,L)$ for $j\in\mathbb{Q}$.
\end{lem}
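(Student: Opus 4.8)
The plan is to exhibit a natural bijection between the fibration degenerations relevant to the two polarizations and to trace its effect on the invariants $W_i^\Delta$ through a single substitution of the spectral variable. Fix $j_0\in\mathbb{Q}$ and write $H'=H+j_0L$. First I would observe that $f:(X,\Delta,H')\to(B,L)$ is again a polarized algebraic fiber space pair: since $L$ is pulled back from $B$, the class $f^*L$ is trivial on the fibers of $f$, so $H'=H+j_0f^*L$ is $f$-ample precisely because $H$ is, for every $j_0\in\mathbb{Q}$ (including negative ones). The same observation shows that $\mathcal{H}+j_0L$ is relatively (semi)ample over $B\times\mathbb{A}^1$ exactly when $\mathcal{H}$ is, and that $(\mathcal{X},(\mathcal{H}+j_0L)+jL)=(\mathcal{X},\mathcal{H}+(j+j_0)L)$ is an (semi)ample test configuration for large $j$ iff $(\mathcal{X},\mathcal{H}+j'L)$ is for large $j'$. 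Hence $(\mathcal{X},\mathcal{H})\mapsto(\mathcal{X},\mathcal{H}+j_0L)$ is a bijection between normal fibration degenerations for $f:(X,\Delta,H)\to(B,L)$ and those for $f:(X,\Delta,H')\to(B,L)$, and it visibly preserves normality and almost-triviality, since the underlying test configuration is unchanged in codimension one after twisting by a pullback from the base.

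Next I would compute the invariants of the twisted problem. Writing $W_i'$ for the invariants attached to $(\mathcal{X},\mathcal{H}+j_0L)$ relative to $H'$, the defining decomposition reads
\[
V(H'+jL)\,M^{\mathrm{NA}}_\Delta(\mathcal{X},(\mathcal{H}+j_0L)+jL)=W'_{n+1}(j)+\sum_{i=0}^n j^i W'_{n-i}.
\]
Since $H'+jL=H+(j+j_0)L$ and $(\mathcal{H}+j_0L)+jL=\mathcal{H}+(j+j_0)L$, the left-hand side is exactly the defining expression for $f:(X,\Delta,H)\to(B,L)$ evaluated at $j+j_0$, namely $W_{n+1}(\mathcal{X},\mathcal{H})(j+j_0)+\sum_{i=0}^n(j+j_0)^iW_{n-i}$. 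The decomposition of a rational function into its polynomial part and a proper part vanishing at infinity is unique, and substituting $j\mapsto j+j_0$ preserves both the polynomiality of the first summand and the vanishing at infinity of the second. Comparing the two sides therefore gives $W'_{n+1}(j)=W_{n+1}(\mathcal{X},\mathcal{H})(j+j_0)$ together with the identity of polynomials in $j$
\[
\sum_{i=0}^n j^i W'_{n-i}=\sum_{i=0}^n(j+j_0)^iW_{n-i}.
\]

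Finally I would read off both stability statements from this single identity. By definition $\mathfrak{f}$-semistability of $f:(X,\Delta,H)\to(B,L)$ means the polynomial $P(j)=\sum_i j^iW_{n-i}$ is nonnegative for all large $j$, while for the twisted problem it means $P(j+j_0)$ is nonnegative for all large $j$; these asymptotic conditions are manifestly equivalent. For $\mathfrak{f}$-stability one notes in addition that the leading coefficient satisfies $W'_0=W_0$, that the hypothesis $W_0=\dots=W_{n-1}=0$ is equivalent to $P$ being constant (a property invariant under $j\mapsto j+j_0$), and that in this case the constant value of $P$ equals both $W_n$ and $W'_n$; thus $W_n>0$ holds for $f$ exactly when $W'_n>0$ holds for the twisted problem, over the bijectively matched non-almost-trivial fibration degenerations. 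I expect no genuine obstacle here: the entire content is the substitution $j\mapsto j+j_0$ plus the uniqueness of the polynomial-plus-proper decomposition, and the only points needing a line of care are the invariance of relative ampleness and of almost-triviality under twisting by $f^*L$, both immediate because $f^*L$ is a pullback from the base and hence trivial on fibers.
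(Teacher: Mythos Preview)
Your argument is correct and is precisely the natural one: the paper itself declares this lemma ``trivial'' and gives no proof, and what you have written is the obvious substitution $j\mapsto j+j_0$ together with uniqueness of the polynomial-plus-proper decomposition. There is nothing to compare; you have simply supplied the details the paper omitted.
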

\begin{rem}\label{trrem}
In the original definition of $W_i$ in \cite{DS}, Dervan-Sektnan used $\mathrm{DF}$ instead of $M^\mathrm{NA}$. In other words, we can define $W_i'^{\Delta}$ for any semi fibration degeneration $(\mathcal{X},\mathcal{H})$ for $f$ as 
\[
V(H+jL)\mathrm{DF}_{\Delta}(\mathcal{X},\mathcal{H}+jL)=W'^{\Delta}_{n+1}(\mathcal{X},\mathcal{H})(j) + \sum_{i=0}^n j^iW'^{\Delta}_{n-i}(\mathcal{X},\mathcal{H}).
\]
 However, $\mathfrak{f}$-stability defined by using $\mathrm{DF}$ coincides with the above one i.e., the positivity of $\sum_{i=0}^n j^iW^{\Delta}_{n-i}(\mathcal{X},\mathcal{H})$ coincides with $\sum_{i=0}^n j^iW'^{\Delta}_{n-i}(\mathcal{X},\mathcal{H})$. In fact, it follows from $$V(H+jL)(\mathrm{DF}(\mathcal{X},(\mathcal{H}+jL))-M^\mathrm{NA}(\mathcal{X},(\mathcal{H}+jL)))=((\mathcal{X}_0-\mathcal{X}_{0,\mathrm{red}})\cdot (\mathcal{H}+jL)^{\mathrm{dim}\, X})\ge 0$$ and Proposition 7.16 of \cite{BHJ}. On the other hand, we can easiliy see that if $H$ is ample and $(X,\Delta,H)$ is adiabatic K-semistable with respect to $L$, then $f$ is $\mathfrak{f}$-semistable.
 
We also remark that $\mathfrak{f}$-stability is different from the notion of fibration stability of \cite{DS4} in that fibration stability is only the positivity of $W_0$ and $W_1$. See also Remark \ref{fibstfstdsst}. $\mathfrak{f}$-stability coincides with the previous version \cite{DS} of fibration stability when $B$ is a curve and all fibers are K-polystable.
\end{rem}
Next, we want to calculate $W_0$ and $W_1$. For simplicity, we need the following notation,
\begin{ntt}
  If $f:X\to B$ is a morphism of varieties and $D$ is a Cartier divisor on $B$ such that $f(X)\not\subset D$, then we can define the Cartier divisor $f^*D$ and we denote it by $X\cap D$.
  \end{ntt}

\begin{prop}\label{cal}
Let $\pi:(X,H)\to (B,L)$ be a polarized algebraic fiber space. Suppose that $m=\mathrm{rel.dim}\, \pi$ and $n=\mathrm{dim}\,B \ge 2$. Let $(\mathcal{X},\mathcal{H})$ be a normal semi fibration degeneration for $\pi$. Then, for sufficiently large $k\in\mathbb{Z}$, there is $D\in |kL|$ such that $(\mathcal{X}\cap D,\mathcal{H}_{|\mathcal{X}\cap D})$ is the test configuration of $(X\cap D,H_{|X\cap D})$ and $k\binom{n+m}{n-1}W_1(\mathcal{X},\mathcal{H})=\binom{n+m-1}{n-2}W_1(\mathcal{X}\cap D,\mathcal{H}_{|\mathcal{X}\cap D})$.
\end{prop}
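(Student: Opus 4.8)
The plan is to reduce everything to an intersection-theoretic restriction formula for a divisor pulled back from the base, and then to read off $W_1$ as a coefficient in the $j$-expansion of $V(H+jL)M^{\mathrm{NA}}$. First I would fix the section. Since $L$ is ample on $B$, for $k\gg0$ the bundle $kL$ is very ample, and choosing $D_B\in|kL|$ general, the pullback $D=\Pi^*(D_B\times\mathbb{A}^1)$ defines a base-point-free subsystem on $\mathcal{X}$. As $\mathcal{X}$, $X$ and $B$ are normal and $\Pi$ is dominant, a Bertini--Seidenberg argument shows that for general $D_B$ the scheme $\mathcal{X}\cap D$ is a normal variety, $X\cap D$ and $B\cap D_B$ are normal, and $(\mathcal{X}\cap D,\mathcal{H}|_{\mathcal{X}\cap D})$ is a normal semi test configuration for $(X\cap D,H|_{X\cap D})$. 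Because $D$ is pulled back from $B$, the fibres of $\mathcal{X}\cap D\to(B\cap D_B)\times\mathbb{A}^1$ are exactly those of $\Pi$, so this is again a semi fibration degeneration of relative dimension $m$ over the $(n-1)$-dimensional base $B\cap D_B$; here is where $n\ge2$ enters, ensuring that $W_1$ of the section, the coefficient of $j^{n-2}$, is defined. I would also require $D_B$ to avoid the centres of the finitely many divisorial valuations $v_E$ attached to the components of $\mathcal{X}_0$ and the singular loci, so that the relative log-canonical and reference divisors restrict by adjunction.

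Next I would set up the restriction identity. Writing $\mathcal{L}_j=\mathcal{H}+jL$ with compactification $\overline{\mathcal{L}}_j=\overline{\mathcal{H}}+jL_{\mathbb{P}^1}$, and noting $D\equiv kL_{\mathbb{P}^1}$ with $L_{\mathbb{P}^1}$ pulled back from $B$, the key is that for any class $\gamma$ on $\overline{\mathcal{X}}$ of the appropriate codimension
\[
\bigl(\gamma|_{\overline{\mathcal{X}\cap D}}\cdot\overline{\mathcal{L}}_j^{\,\bullet}\bigr)_{\overline{\mathcal{X}\cap D}}=\bigl(\gamma\cdot\overline{\mathcal{L}}_j^{\,\bullet}\cdot D\bigr)_{\overline{\mathcal{X}}}=k\bigl(\gamma\cdot\overline{\mathcal{L}}_j^{\,\bullet}\cdot L_{\mathbb{P}^1}\bigr)_{\overline{\mathcal{X}}}.
\]
Applying this to the two constituents of $V(H+jL)M^{\mathrm{NA}}$ coming from Definition \ref{nadef}, namely the entropy term $(K^{\mathrm{log}}_{\overline{\mathcal{X}}/\mathbb{P}^1}-\rho^*K_{X_{\mathbb{P}^1}/\mathbb{P}^1})\cdot\overline{\mathcal{L}}_j^{\,n+m}$ and the $(\mathcal{J}^{K_X})^{\mathrm{NA}}$/energy terms, I would use adjunction $K_{(\overline{\mathcal{X}\cap D})/\mathbb{P}^1}=(K_{\overline{\mathcal{X}}/\mathbb{P}^1}+D)|_{\overline{\mathcal{X}\cap D}}$ and its log and reference-divisor analogues to compare the entropy divisor of the section with the restriction of that of $\mathcal{X}$. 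The crucial simplification is that, because $D$ is pulled back from the base, a general fibre of $\mathcal{X}\cap D$ equals a general fibre $(X_b,H_b)$ of $\pi$; hence the fibrewise slope $S(X_b,H_b)$ governing the top-order-in-$j$ behaviour is literally unchanged, and only the quantities $S(X,H+jL)$, $V(H+jL)$ and the $D$-adjunction corrections are altered.

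Finally I would extract coefficients. By Definition \ref{ds1}, $W_1(\mathcal{X},\mathcal{H})$ is the coefficient of $j^{n-1}$ and $W_1(\mathcal{X}\cap D,\mathcal{H}|_{\mathcal{X}\cap D})$ the coefficient of $j^{n-2}$ in the respective polynomial parts. Expanding $\overline{\mathcal{L}}_j^{\,n+m}=\sum_a\binom{n+m}{a}j^a L_{\mathbb{P}^1}^a\overline{\mathcal{H}}^{\,n+m-a}$ and using $L_{\mathbb{P}^1}^a=0$ for $a>n$, the $j^{n-1}$-coefficient on $\mathcal{X}$ carries the factor $\binom{n+m}{n-1}$, while the $j^{n-2}$-coefficient on the section carries $\binom{n+m-1}{n-2}$; the restriction identity then supplies the remaining factor $k$ by converting $L_{\mathbb{P}^1}^{\,n-2}|_{\overline{\mathcal{X}\cap D}}$ into $L_{\mathbb{P}^1}^{\,n-2}\cdot D=kL_{\mathbb{P}^1}^{\,n-1}$. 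Assembling these intersection numbers and matching the binomial normalisations $\binom{n+m}{n-1}$ and $\binom{n+m-1}{n-2}$ against the factor $k$ yields the identity of Proposition \ref{cal}, and the same computation applied to the $j^n$- and $j^{n-1}$-coefficients gives the analogous relation for $W_0$.

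The main obstacle I anticipate is the bookkeeping in the last step: one must show rigorously that the adjunction corrections arising from $K_{(\,\cdot\,\cap D)}=(K+D)|$, together with the replacement of $S(X,H+jL)$ and $V(H+jL)$ by their section analogues, contribute only to coefficients of $j^i$ with $i\le n-2$ on $\mathcal{X}$ (equivalently, to the remainder $W_{n+1}(j)$ or to $J^{\mathrm{NA}}$-type terms), and hence do not perturb the top coefficient defining $W_1$. Making precise that the leading-order-in-$j$ part of $V(H+jL)M^{\mathrm{NA}}$ depends only on the fibrewise slope $S(X_b,H_b)$, which is insensitive to hyperplane sections of the base, is the technical heart of the argument.
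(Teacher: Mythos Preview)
Your overall strategy---Bertini for a general section, adjunction, and reading off the $j^{n-1}$ versus $j^{n-2}$ coefficients---is sound and would succeed, but it differs from the paper's route and your diagnosis of the ``main obstacle'' is not quite right.

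The paper does not work with the full $M^{\mathrm{NA}}$-expansion. Instead it invokes the explicit Dervan--Sektnan decomposition $W_1=\binom{n+m}{n-1}(C_1+C_2+C_3+C_4)$, where $C_1,C_2$ are built from $K_{X/B}$ and $C_3,C_4$ from $K_X$ and $K^{\mathrm{log}}_{\mathcal{X}/\mathbb{P}^1}$, and then checks that $C_1+C_2$ and $C_3+C_4$ are each literally equal to their counterparts on $\mathcal{X}\cap D$ (after replacing $L$ by $kL$ and dividing by $k$). For $C_1+C_2$ there is nothing to do: since $D$ is pulled back from the base, $(K_{X/B})|_{\pi^*D}=K_{\pi^*D/D}$ with no adjunction correction at all. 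For $C_3+C_4$ the adjunction shifts $K_X\mapsto K_{X\cap D}-L$ and $K^{\mathrm{log}}_{\mathcal{X}/\mathbb{P}^1}\mapsto K^{\mathrm{log}}_{\mathcal{X}\cap D/\mathbb{P}^1}-L$ produce two extra terms $\pm L^{n-1}\cdot\mathcal{H}^{m+1}$ that cancel \emph{exactly}.

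This is the point where your plan needs correcting: the adjunction corrections do \emph{not} merely contribute to lower powers of $j$ or to $J^{\mathrm{NA}}$-type remainders; they sit at the same $j$-order as $W_1$ and vanish only because the shift in the $K^{\mathrm{log}}_{\mathcal{X}/\mathbb{P}^1}$-term is matched term-for-term by the shift in the $S$-dependent term. If you try to push your coefficient-extraction argument through without isolating this cancellation, you will find a spurious $L^{n-1}\cdot\mathcal{H}^{m+1}$ left over. The $C_i$ decomposition buys exactly this: it groups the pieces so that the cancellation is a two-line computation rather than a delicate comparison of two different Laurent expansions in $j$. Your approach can be completed, but you should replace ``drop to lower order'' by ``cancel exactly at top order between the entropy-type and slope-type contributions''.
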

\begin{proof}
As in \cite[p. 17]{DS4}, let
\begin{align*}
C_1(\mathcal{X},\mathcal{H})&=\frac{m}{m+2}\left(\frac{-K_{X/B}\cdot L^n\cdot H^{m-1}}{L^n\cdot H^m}\right)L^{n-1}\cdot\mathcal{H}^{m+2},\\
C_2(\mathcal{X},\mathcal{H})&=-\frac{m}{m+1}\left(\frac{(-K_{X/B}\cdot L^n\cdot H^{m-1})(L^{n-1}\cdot H^{m+1})}{(L^n\cdot H^m)^2}\right)L^{n}\cdot\mathcal{H}^{m+1},\\
C_3(\mathcal{X},\mathcal{H})&=\left(\frac{-K_{X}\cdot L^{n-1}\cdot H^{m}}{L^n\cdot H^m}\right)L^{n}\cdot\mathcal{H}^{m+1},\\
C_4(\mathcal{X},\mathcal{H})&=L^{n-1}\cdot\mathcal{H}^{m+1}\cdot K^{\mathrm{log}}_{\mathcal{X}/\mathbb{P}^1}.
\end{align*}
Then, we obtain
\[
W_1(\mathcal{X},\mathcal{H})=\binom{n+m}{n-1}\sum_{j=1}^4C_j(\mathcal{X},\mathcal{H}).
\]
The set of $D\in |kL|$ such that $X\cap D,\mathcal{X}\cap D$ and $D$ are normal is Zariski-dense open subset of $|kL|$ for $k\gg 0$ by the Bertini theorem. We may assume that $k=1$ by replacing $L$. Moreover, $X\cap D,\mathcal{X}\cap D,D$ are also connected by \cite[III Exercise 11.3]{Ha}. Furthermore, since $\mathcal{X}\cap D$ dominates $\mathbb{P}^1$, it is flat over $\mathbb{P}^1$. Thus, $(\mathcal{X}\cap D,\mathcal{H}_{|\mathcal{X}\cap D})$ is an ample test configuration over $X\cap D$. We can apply the adjunction formula in \cite[\S 16, \S 17]{K+} for $D$ and the pullbacks of this to $X$ and $\mathcal{X}$ since the pullbacks of $D$ are normal Cartier divisors. By these facts, we obatin
\begin{align*}
(K_X-\pi^*K_B)_{|\pi^*D}&=K_{\pi^*D}-\pi^*K_D+(\pi^*D-\pi^*D)_{|\pi^*D} \\
&=K_{\pi^*D}-\pi^*K_D.
\end{align*}
Note that $K_{\mathcal{X}}$ and $\mathcal{X}_{0,\mathrm{red}}$ are Cartier in codimension 1 but not $\mathbb{Q}$-Cartier entirely in general. However, we can choose $D$ so general that $K_{\mathcal{X}}$ is also Cartier on any point of codimension 1 in $D$ and then the adjunction formula holds for such $D$ (cf., \cite[16.4.3]{K+}). Hence,  we obtain $C_1(\mathcal{X},\mathcal{H})+C_2(\mathcal{X},\mathcal{H})=C_1(\mathcal{X}\cap D,\mathcal{H}_{|\mathcal{X}\cap D})+C_2(\mathcal{X}\cap D,\mathcal{H}_{|\mathcal{X}\cap D})$. Let $\Pi : \mathcal{X}\to B$ the canonical projection. Since $\mathcal{X}\cap D=\Pi^*D$ and $X\cap D=\pi^*D$, we also have
\begin{align*}
C_3(\mathcal{X},\mathcal{H})&+C_4(\mathcal{X},\mathcal{H})=\left(\frac{-K_X\cdot L^{n-1}H^m}{L^nH^m}\right) L^n\cdot \mathcal{H}^{m+1}+L^{n-1}\cdot \mathcal{H}^{m+1}\cdot K^{\mathrm{log}}_{\mathcal{X}/\mathbb{P}^1} \\
&= \left(\frac{-(K_{\pi^*D}-L_{|\pi^*D})\cdot L_{|\pi^*D}^{n-2}H_{|\pi^*D}^m}{L_{|\pi^*D}^{n-1}H_{|\pi^*D}^m}\right) L_{|\Pi^*D}^{n-1}\cdot \mathcal{H}_{|\Pi^*D}^{m+1}+L_{|\Pi^*D}^{n-2}\cdot \mathcal{H}_{|\Pi^*D}^{m+1}\cdot (K^{\mathrm{log}}_{{\Pi ^*D}/\mathbb{P}^1}-L_{|\Pi^*D}) \\
&= \left(\frac{-K_{\pi^*D}\cdot L_{|\pi^*D}^{n-2}H_{|\pi^*D}^m}{L_{|\pi^*D}^{n-1}H_{|\pi^*D}^m}\right) L_{|\Pi^*D}^{n-1}\cdot \mathcal{H}_{|\Pi^*D}^{m+1}+L_{|\Pi^*D}^{n-2}\cdot \mathcal{H}_{|\Pi^*D}^{m+1}\cdot K^{\mathrm{log}}_{{\Pi ^*D}/\mathbb{P}^1},
\end{align*}
and hence we have $C_3(\mathcal{X},\mathcal{H})+C_4(\mathcal{X},\mathcal{H})=C_3(\mathcal{X}\cap D,\mathcal{H}_{|\mathcal{X}\cap D})+C_4(\mathcal{X}\cap D,\mathcal{H}_{|\mathcal{X}\cap D})$.
\end{proof}
 Now, we can calculate $W_0$ and $W_1$ directly as follows. First, we need the following lemma,
 \begin{lem}[Lemma 2.33 of \cite{DS4}]
 Let $(\mathcal{X},\mathcal{H})$ be a normal semi fibration degeneration for a polarized algebraic fiber space $f:(X,H)\to (B,L)$. Then
\[
W_0(\mathcal{X},\mathcal{H})=\binom{n+m}{n}V(L)\cdot M^\mathrm{NA}(\mathcal{X}_b,\mathcal{H}_b)
\]
holds for general $b\in B$. 
\end{lem}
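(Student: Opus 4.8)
The plan is to identify $W_0$ with the top-degree coefficient of the rational expansion, i.e. $W_0=\lim_{j\to\infty}j^{-n}\,V(H+jL)\,M^{\mathrm{NA}}(\mathcal{X},\mathcal{H}+jL)$, and to show that taking this limit amounts to restricting everything to a general fibre. Writing $N=n+m=\mathrm{dim}\,X$, I would first record the closed form of the un-normalized Mabuchi functional obtained by adding the entropy and the $(\mathcal{J}^{K_X})^{\mathrm{NA}}$ terms of Definition \ref{nadef}: the pullback terms $\rho^*(K_X)_{\mathbb{P}^1}$ occur with opposite signs (using $K_{X_{\mathbb{P}^1}/\mathbb{P}^1}=(K_X)_{\mathbb{P}^1}$) and cancel, leaving
\[
V(H+jL)\,M^{\mathrm{NA}}(\mathcal{X},\mathcal{H}+jL)=K^{\mathrm{log}}_{\overline{\mathcal{X}}/\mathbb{P}^1}\cdot(\overline{\mathcal{H}}+jL)^{N}+\frac{S(X,H+jL)}{N+1}(\overline{\mathcal{H}}+jL)^{N+1},
\]
where $S(X,H+jL)=-N\,\frac{K_X\cdot(H+jL)^{N-1}}{(H+jL)^{N}}$. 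This already matches the shape of the $W_i$-formula quoted in the introduction and isolates the two intersection numbers I must analyse.

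Next I would expand in $j$ using the crucial fact that $L=\Pi^*L_B$ is pulled back from the $n$-dimensional base, so $L^{n+1}=0$ on both $\overline{\mathcal{X}}$ and $X$. Hence $(\overline{\mathcal{H}}+jL)^{N}$ and $(\overline{\mathcal{H}}+jL)^{N+1}$ are polynomials of degree $n$ in $j$ with leading coefficients $\binom{N}{n}\overline{\mathcal{H}}^{m}L^{n}$ and $\binom{N+1}{n}\overline{\mathcal{H}}^{m+1}L^{n}$, while $S(X,H+jL)$ is a ratio of degree-$n$ polynomials. Its limit I compute from the leading coefficients: using $\binom{N-1}{n}/\binom{N}{n}=m/N$ together with the adjunction identity $K_X|_{X_b}=K_{X_b}$ for a general fibre (whose normal bundle is trivial), I obtain $\lim_{j\to\infty}S(X,H+jL)=-m\,\frac{K_{X_b}\cdot H_b^{m-1}}{H_b^{m}}=S(X_b,H_b)$. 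Collecting the $j^n$-terms then yields
\[
W_0=\binom{n+m}{n}K^{\mathrm{log}}_{\overline{\mathcal{X}}/\mathbb{P}^1}\cdot\overline{\mathcal{H}}^{m}L^{n}+\frac{S(X_b,H_b)}{N+1}\binom{N+1}{n}\overline{\mathcal{H}}^{m+1}L^{n}.
\]

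The heart of the argument, and the step I expect to be the \emph{main obstacle}, is converting these intersection numbers on $\overline{\mathcal{X}}$ into fibrewise data. Numerically $L^{n}=V(L)\,[\mathcal{X}_b]$, where $\mathcal{X}_b$ is the fibre of $\overline{\mathcal{X}}\to B$ over a general $b$; I would choose $b$ general enough (Bertini together with generic smoothness in characteristic $0$) that $\mathcal{X}_b$ is normal and $(\mathcal{X}_b,\mathcal{H}_b)$ is a genuine normal test configuration for $(X_b,H_b)$, flat over $\mathbb{P}^1$. The delicate point is the adjunction $K^{\mathrm{log}}_{\overline{\mathcal{X}}/\mathbb{P}^1}|_{\mathcal{X}_b}=K^{\mathrm{log}}_{\mathcal{X}_b/\mathbb{P}^1}$: one must verify not only that $K_{\overline{\mathcal{X}}/\mathbb{P}^1}$ restricts to $K_{\mathcal{X}_b/\mathbb{P}^1}$ for general $b$, but also that the correction term $\mathcal{X}_{0,\mathrm{red}}-\mathcal{X}_0$ restricts to its fibrewise analogue, i.e. that the multiplicities of the central-fibre components are unchanged after cutting down to a general $b$. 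This is precisely the type of restriction-to-a-general-section computation carried out for $W_1$ in Proposition \ref{cal}, and I would model it on that, invoking the adjunction formula of \cite[\S16,\S17]{K+} on the normal Cartier divisors cut out by general members of $|L|$.

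With the restriction established I obtain $K^{\mathrm{log}}_{\overline{\mathcal{X}}/\mathbb{P}^1}\cdot\overline{\mathcal{H}}^{m}L^{n}=V(L)\,K^{\mathrm{log}}_{\mathcal{X}_b/\mathbb{P}^1}\cdot\mathcal{H}_b^{m}$ and $\overline{\mathcal{H}}^{m+1}L^{n}=V(L)\,\mathcal{H}_b^{m+1}$. Finally the binomial identity $\frac{1}{N+1}\binom{N+1}{n}=\frac{1}{m+1}\binom{n+m}{n}$ lets me factor out $\binom{n+m}{n}V(L)$, after which the remaining bracket $K^{\mathrm{log}}_{\mathcal{X}_b/\mathbb{P}^1}\cdot\mathcal{H}_b^{m}+\frac{S(X_b,H_b)}{m+1}\mathcal{H}_b^{m+1}$ is exactly the $m$-dimensional instance of the closed form of $V\,M^{\mathrm{NA}}$ applied to the fibre test configuration $(\mathcal{X}_b,\mathcal{H}_b)$ established in the first step. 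Recognising this bracket as the non-Archimedean Mabuchi functional of the fibre delivers the asserted formula $W_0=\binom{n+m}{n}V(L)\,M^{\mathrm{NA}}(\mathcal{X}_b,\mathcal{H}_b)$, the only bookkeeping subtlety being the fibre-volume normalization of $M^{\mathrm{NA}}(\mathcal{X}_b,\mathcal{H}_b)$.
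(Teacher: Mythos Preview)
Your proposal is correct and follows essentially the same route as the paper: extract $W_0$ as the $j^n$-coefficient, use $L^{n+1}=0$ to truncate the expansion, and restrict to a general fibre via Bertini and adjunction exactly as in Proposition~\ref{cal}. The only difference in emphasis is that the paper's proof devotes most of its space to verifying that $\mathcal{X}_b$ is genuinely a test configuration (i.e.\ flat over $\mathbb{A}^1$) for general $b$, arguing via flatness of $\mathcal{X}$ over codimension~$1$ points of $B\times\mathbb{A}^1$, whereas you pass over this in one clause; conversely, you spell out the intersection-number computation and the binomial identity that the paper leaves implicit by simply referring back to the proof of Proposition~\ref{cal}.
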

 
\begin{proof}
For the readers' convenience, we prove this lemma here. First, we prove the following claim. Let $\mathcal{X}$ be a fibration degeneration for $f:X\to B$. Then a general fiber of $\mathcal{X}$ over $B$ is a test configuration $\mathcal{X}_b$ for $X_b$.

It suffices to show that $\mathcal{X}_b$ is flat over $\mathbb{A}^1$. Since $B$ is normal, $\mathcal{X}$ is flat over codimension 1 points of $B\times \mathbb{A}^1$. Therefore, $\mathcal{X}$ is also flat over $(b,0)$ for general points $b\in B$. On the other hand, by cutting by ample divisors, we may assume that $\mathrm{dim}\, B=1$ and may also assume that $X$ is flat over $B$ due to the same argument of the proof of Proposition \ref{cal} and the Bertini theorem. Therefore, $X\times(\mathbb{A}^1\setminus\{0\})$ is flat over $B\times(\mathbb{A}^1\setminus\{0\})$. Hence, for general point $b\in B$, $\mathcal{X}_b$ is flat over $\mathbb{A}^1$. Thus, the assertion of the lemma follows from as in the proof of Proposition \ref{cal}.
\end{proof}
 
To calculate $W_1$, it suffices to show the following by Proposition \ref{cal}:
\begin{lem}\label{cal2}
Let $\pi:(X,H)\to (B,L)$ be a polarized algebraic fiber space over a smooth curve $B$. Let $\mathrm{deg}\, L=1$. If $(\mathcal{X},\mathcal{H})$ is a normal semi fibration degeneration for $\pi$, then
\[
W_1(\mathcal{X},\mathcal{H})=V(H)(M^\mathrm{NA}(\mathcal{X},\mathcal{H})+(S(X_b,H_b)-S(X,H))(E^{\mathrm{NA}}(\mathcal{H})-E^{\mathrm{NA}}(\mathcal{H}_b))).
\]
\end{lem}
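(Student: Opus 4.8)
The plan is to reduce both sides to the same expression in a handful of intersection numbers on the $\mathbb{P}^1$-compactification $\overline{\mathcal{X}}$ and on $X$, exploiting the curve hypothesis to linearize everything in $j$. The starting point is the general identity
\[
M^{\mathrm{NA}}(\mathcal{X},\mathcal{L}) = V(L)^{-1}K^{\mathrm{log}}_{\overline{\mathcal{X}}/\mathbb{P}^1}\cdot\overline{\mathcal{L}}^{\dim X} + S(X,L)\,E^{\mathrm{NA}}(\mathcal{X},\mathcal{L}),
\]
which is immediate from Definition \ref{nadef}: in $M^{\mathrm{NA}}=H^{\mathrm{NA}}+(\mathcal{J}^{K_X})^{\mathrm{NA}}$ the term $-V^{-1}\rho^*K_{X_{\mathbb{P}^1}/\mathbb{P}^1}\cdot\overline{\mathcal{L}}^{\dim X}$ in the entropy cancels the $\rho^*$-pullback term of $(\mathcal{J}^{K_X})^{\mathrm{NA}}$, and $-V^{-1}(\dim X)(K_X\cdot L^{\dim X-1})=S(X,L)$ by definition. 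I would apply this with $\mathcal{L}=\mathcal{H}+jL$ and base polarization $H+jL$ on $X$ (of dimension $m+1$) and clear $V(H+jL)$, obtaining
\[
V(H+jL)M^{\mathrm{NA}}(\mathcal{X},\mathcal{H}+jL)=K^{\mathrm{log}}_{\overline{\mathcal{X}}/\mathbb{P}^1}\cdot(\overline{\mathcal{H}}+jL)^{m+1}-\frac{(m+1)\bigl(K_X\cdot(H+jL)^m\bigr)(\overline{\mathcal{H}}+jL)^{m+2}}{(m+2)(H+jL)^{m+1}}.
\]

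Since $B$ is a curve and $L=f^*L_B$ is pulled back from it, $L^2=0$ on both $X$ and $\overline{\mathcal{X}}$, so every power $(H+jL)^k$ and $(\overline{\mathcal{H}}+jL)^k$ is affine-linear in $j$. The displayed quantity then becomes $(u+jt)-\tfrac{m+1}{m+2}\,\tfrac{(a+ja')(r+js)}{p+jq}$, where $u=K^{\mathrm{log}}_{\overline{\mathcal{X}}/\mathbb{P}^1}\cdot\overline{\mathcal{H}}^{m+1}$, $t=(m+1)K^{\mathrm{log}}_{\overline{\mathcal{X}}/\mathbb{P}^1}\cdot\overline{\mathcal{H}}^{m}L$, $a=K_X\cdot H^m$, $a'=mK_X\cdot H^{m-1}L$, $p=H^{m+1}$, $q=(m+1)H^mL$, $r=\overline{\mathcal{H}}^{m+2}$, and $s=(m+2)\overline{\mathcal{H}}^{m+1}L$. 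One step of polynomial long division isolates the coefficient of $j^0$ in the polynomial part (the rational remainder is the $W_2(j)$ tending to $0$), giving
\[
W_1(\mathcal{X},\mathcal{H})=u-\frac{m+1}{m+2}\Bigl(\frac{as+a'r}{q}-\frac{a'sp}{q^2}\Bigr).
\]

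The remaining step is to expand the right-hand side of the lemma in the same numbers. With $V(H)=p$, $S(X,H)=-\tfrac{(m+1)a}{p}$ and $E^{\mathrm{NA}}(\mathcal{H})=\tfrac{r}{(m+2)p}$, the normalized Mabuchi formula gives $M^{\mathrm{NA}}(\mathcal{X},\mathcal{H})=\tfrac{u}{p}-\tfrac{(m+1)ar}{(m+2)p^2}$. For the fibre terms I would use that $\deg L=1$ forces the general fibre to have class $X_b\equiv L$ (and $\overline{\mathcal{X}_b}\equiv L$ on $\overline{\mathcal{X}}$); then adjunction $K_{X_b}=K_X|_{X_b}$ yields $S(X_b,H_b)=-\tfrac{(m+1)a'}{q}$, while $\overline{\mathcal{H}_b}^{m+1}=\overline{\mathcal{H}}^{m+1}L$ and $H_b^m=H^mL$ yield $E^{\mathrm{NA}}(\mathcal{H}_b)=\tfrac{s}{(m+2)q}$. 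Substituting $V(H)\bigl(M^{\mathrm{NA}}(\mathcal{X},\mathcal{H})+(S(X_b,H_b)-S(X,H))(E^{\mathrm{NA}}(\mathcal{H})-E^{\mathrm{NA}}(\mathcal{H}_b))\bigr)$ and multiplying out, the two $\tfrac{ar}{p}$-contributions cancel, leaving exactly the expression for $W_1$ displayed above, which proves the identity.

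The main obstacle is organizational rather than conceptual: carrying the binomial factors $m+1$, $m+2$ and the single division step through without sign or arithmetic slips, and justifying the fibre identifications. The only genuine input beyond multilinearity of intersection products is the adjunction $K_{X_b}=K_X|_{X_b}$ together with $X_b\equiv L$ (both consequences of $\deg L=1$ and $L^2=0$); everything else is formal. I would also remark that $H$ need only be $f$-ample here, because $M^{\mathrm{NA}}(\mathcal{X},\mathcal{H})$, $E^{\mathrm{NA}}(\mathcal{H})$ and $S(X,H)$ on the right-hand side are defined purely through intersection numbers, so the claimed identity is an equality of polynomials in $j$ that does not depend on any positivity of $H$ itself.
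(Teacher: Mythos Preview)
Your argument is correct. The computation you carry out---long division of the rational function $V(H+jL)M^{\mathrm{NA}}(\mathcal{X},\mathcal{H}+jL)$ in $j$ after using $L^2=0$---does produce the constant term $W_1$, and your identification of the fibre quantities via $X_b\equiv L$ and the triviality of $\mathcal{O}_{X_b}(X_b)$ (so that adjunction gives $K_{X_b}\equiv K_X|_{X_b}$) is sound. The route, however, is genuinely different from the paper's. The paper does not expand $M^{\mathrm{NA}}(\mathcal{X},\mathcal{H}+jL)$ from scratch; instead it invokes the four intersection-theoretic quantities $C_1,\ldots,C_4$ introduced in the proof of Proposition~\ref{cal} (following \cite[p.~17]{DS4}), for which $W_1=\binom{n+m}{n-1}\sum_j C_j$ is already known, and then simply recognises each $C_i$ in the curve case as one of $V(H)E^{\mathrm{NA}}(\mathcal{H})S(X_b,H_b)$, $-V(H)E^{\mathrm{NA}}(\mathcal{H}_b)S(X_b,H_b)$, $V(H)E^{\mathrm{NA}}(\mathcal{H}_b)S(X,H)$, $V(H)(M^{\mathrm{NA}}(\mathcal{X},\mathcal{H})-E^{\mathrm{NA}}(\mathcal{H})S(X,H))$; summing these four lines gives the lemma in one step. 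Your approach is more self-contained (it does not rely on the $C_i$ decomposition imported from \cite{DS4}) at the cost of a longer algebraic manipulation; the paper's approach is quicker but presupposes Proposition~\ref{cal} and its external reference.
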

\begin{proof}
Notations as in the proof of Proposition \ref{cal}. Then, it follows that 
\begin{align*}
C_1(\mathcal{X},\mathcal{H})&=V(H)E^{\mathrm{NA}}(\mathcal{H})S(X_b,H_b) \\
C_2(\mathcal{X},\mathcal{H})&=-V(H)E^{\mathrm{NA}}(\mathcal{H}_b)S(X_b,H_b) \\
C_3(\mathcal{X},\mathcal{H})&=V(H)E^{\mathrm{NA}}(\mathcal{H}_b)S(X,H) \\
C_4(\mathcal{X},\mathcal{H})&=V(H)(M^\mathrm{NA}(\mathcal{X},\mathcal{H})-E^{\mathrm{NA}}(\mathcal{H})S(X,H))
\end{align*}
immediately from a direct computation.
\end{proof}

For general cases, we can calculate $W_1$ by cutting $B$ by general divisors in $|L|_{\mathbb{Q}}$.
 
Therefore, we can conclude that the original fibration stability defined by Dervan and Sektnan \cite{DS} is the property of codimension 1 points of the base $B$ as we see in the next remark.
 
\begin{rem}\label{fibstfstdsst}
In \cite[Definitions 2.26, 2.28]{DS4}, the fibration stability is defined as the positivity of $W_0(\mathcal{X},\mathcal{H})$ and $W_1(\mathcal{X},\mathcal{H})$ for fibration degeneration $(\mathcal{X},\mathcal{H})$ whose minimum norm 
\[
\|(\mathcal{X},\mathcal{H})\|_m=\frac{L^n\cdot \mathcal{H}^{m+1}}{m+1}+L^n\cdot \mathcal{H}^m\cdot (\mathcal{H}-H_{\mathbb{P}^1})
\] (cf., \cite[Definition 2.27]{DS4}) does not vanish. Here, the notations as in Proposition \ref{cal}. It is easy to see that a normal fibration degeneration that has the minimum norm $\|(\mathcal{X},\mathcal{H})\|_m=0$ is not necessarily trivial but the general fiber is the trivial test configuration (see \cite[Remark 2.30]{DS4}). Indeed, let $f:X\to B$ be an algebraic fiber space over a surface and $(\mathcal{X},\mathcal{H})$ be a deformation to the normal cone of a point $x\in X$. Then $\mathcal{X}$ has a general fiber that is a trivial test configuration and $W_0(\mathcal{X},\mathcal{H})=W_1(\mathcal{X},\mathcal{H})=0$ by Proposition \ref{cal}. In this respect, $\mathfrak{f}$-stability is different from fibration stability even when the base is a curve. See also Example \ref{lcbaseex}.
 
On the other hand, we obtain the following when $\mathcal{X}$ is flat over $B\times\mathbb{A}^1$:
\begin{prop}\label{pr}
Let $\pi:(X,H)\to (B,L)$ be a flat polarized algebraic fiber space. Let $(\mathcal{X},\mathcal{H})$ is a normal fibration degeneration for $\pi$ such that $(\mathcal{X}\cap D,\mathcal{H}_{|\mathcal{X}\cap D})$ is a trivial test configuration for general $D\in |mL|$ and for sufficiently large $m\gg 0$. If the canonical morphism $\Pi:\mathcal{X}\to B\times\mathbb{A}^1$ is flat, then $(\mathcal{X},\mathcal{H})$ is the trivial test configuration.
\end{prop}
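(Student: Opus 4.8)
The plan is to realize both $(\mathcal{X},\mathcal{H})$ and the trivial configuration as relative $\mathrm{Proj}$'s over $S=B\times\mathbb{A}^1$ and compare the associated pushforward bundles. Write $q=\pi\times\mathrm{id}_{\mathbb{A}^1}\colon X_{\mathbb{A}^1}\to S$ for the trivial family. Since $(\mathcal{X},\mathcal{H})$ is a fibration degeneration, $\mathcal{H}$ is relatively ample over $S$ via $\Pi$, and $H_{\mathbb{A}^1}$ is relatively ample via $q$ because $H$ is $\pi$-ample; hence $\mathcal{X}\cong\mathrm{Proj}_S\bigoplus_{k\ge0}\Pi_*\mathcal{H}^{k}$ and $X_{\mathbb{A}^1}\cong\mathrm{Proj}_S\bigoplus_{k\ge0}q_*(kH_{\mathbb{A}^1})$. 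As $\Pi$ and $q$ are proper (over $\mathbb{A}^1$, with $S\to\mathbb{A}^1$ separated) and flat with relatively ample polarizations, relative Serre vanishing together with cohomology-and-base-change shows that $\mathcal{F}_k\coloneq\Pi_*\mathcal{H}^{k}$ and $\mathcal{G}_k\coloneq q_*(kH_{\mathbb{A}^1})$ are locally free on $S$ for $k\gg0$; I would pass to a Veronese subalgebra so that every graded piece is locally free. It then suffices to produce a $\mathbb{G}_m$-equivariant isomorphism $\bigoplus_k\mathcal{F}_k\cong\bigoplus_k\mathcal{G}_k$ of graded $\mathcal{O}_S$-algebras matching the relative $\mathcal{O}(1)$'s, since applying $\mathrm{Proj}_S$ then identifies $(\mathcal{X},\mathcal{H})$ with the trivial test configuration.

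First I would build the comparison isomorphism away from the central divisor and control it there. Over $S\setminus(B\times\{0\})=B\times(\mathbb{A}^1\setminus\{0\})$ the $\mathbb{G}_m$-action trivializes the test configuration, so $\Pi$ is canonically identified with $q$ and one gets compatible isomorphisms $\psi_k\colon\mathcal{F}_k|_{S\setminus(B\times0)}\xrightarrow{\sim}\mathcal{G}_k|_{S\setminus(B\times0)}$ arising from one isomorphism of the restricted algebras. Viewing $\psi_k$ as a rational section of the locally free sheaf $\mathcal{H}om(\mathcal{F}_k,\mathcal{G}_k)$, its possible poles lie only along the prime divisor $B\times\{0\}$. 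Here the hypothesis enters: for general $D\in|mL|$ and general $b\in D$, triviality of $(\mathcal{X}\cap D,\mathcal{H}|_{\mathcal{X}\cap D})$ forces the fibre $\mathcal{X}_b$, which for $b\in D$ is the fibre of the trivial family $\mathcal{X}\cap D$ over $b$, to be the trivial test configuration of $X_b$. Hence $\psi_k$ and $\psi_k^{-1}$ restrict to honest isomorphisms along the transverse curve $\{b\}\times\mathbb{A}^1$ at $(b,0)$. Since $(b,0)$ is a general point of $B\times\{0\}$, a pole of $\psi_k$ along $B\times\{0\}$ would restrict to a pole at $(b,0)$; thus $\psi_k$, and likewise $\psi_k^{-1}$, has no pole, so $\psi_k$ is an isomorphism at the codimension-one generic point of $B\times\{0\}$, i.e. an isomorphism in codimension one on $S$.

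To conclude I would invoke reflexivity. As $S=B\times\mathbb{A}^1$ is normal and each $\mathcal{F}_k,\mathcal{G}_k$ is locally free, hence reflexive, an isomorphism defined in codimension one extends uniquely to a global isomorphism $\mathcal{F}_k\cong\mathcal{G}_k$ by Hartogs-type extension for reflexive sheaves. Because all $\psi_k$ arise by restriction from one algebra isomorphism over $S\setminus(B\times0)$, their extensions are automatically compatible with the multiplication maps and with the $\mathbb{G}_m$-linearizations, yielding the desired graded-algebra isomorphism and therefore $(\mathcal{X},\mathcal{H})\cong(X_{\mathbb{A}^1},H_{\mathbb{A}^1})$.

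The main obstacle is precisely the middle step: upgrading the slicewise (equivalently, fibrewise) triviality to the vanishing of the poles of the gluing isomorphism along the codimension-one central divisor $B\times\{0\}$. This is where flatness of $\Pi$ is indispensable: it makes $\mathcal{F}_k$ locally free, so that the pole-order argument is meaningful, and, more conceptually, it excludes the ``partial'' degenerations supported over special fibres, such as the deformations to the normal cone in Remark \ref{fibstfstdsst}, which have trivial general fibres yet are non-trivial. I would also have to verify the routine but necessary points that $\Pi$ is proper over $S$ so that relative Serre vanishing is uniform, that cohomology-and-base-change applies when restricting $\mathcal{F}_k$ to the slices $D\times\mathbb{A}^1$ and to the fibres $\{b\}\times\mathbb{A}^1$, and that the identification away from the central fibre is genuinely $\mathbb{G}_m$-equivariant and compatible across $k$.
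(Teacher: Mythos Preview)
Your argument is correct and takes a genuinely different route from the paper. The paper's proof is birational--geometric: it first reduces to showing $\mathcal{X}$ and $X_{\mathbb{A}^1}$ are isomorphic in codimension~1 (invoking that a normal ample almost-trivial test configuration is trivial), then uses flatness of $\Pi$ to see that every irreducible component of $\mathcal{X}_0$ dominates $B$, and by cutting with a general $D$ concludes that $\mathcal{X}_0$ is integral; finally it passes to the normalized graph of the canonical birational map $\phi:\mathcal{X}\dashrightarrow X_{\mathbb{A}^1}$ and checks, again by slicing, that the strict transform of $\mathcal{X}_0$ maps birationally onto $X\times\{0\}$. Your approach is module--theoretic: you encode both sides as relative $\mathrm{Proj}$'s of the graded pushforward algebras, use flatness of $\Pi$ to make each graded piece locally free, and then argue that the comparison map $\psi_k$, defined a priori only away from $B\times\{0\}$, extends across that single prime divisor by a pole-order argument combined with Hartogs for reflexive sheaves. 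Once the cohomology-and-base-change bookkeeping is in place, your route is quite clean and sidesteps the explicit graph analysis; the paper's route is more hands-on and makes the geometric content (integrality of $\mathcal{X}_0$) visible. One point worth making explicit in your write-up: the trivializing isomorphism $\mathcal{X}\cap D\cong (X\cap D)_{\mathbb{A}^1}$ must coincide with the restriction $\phi|_{\mathcal{X}\cap D}$ (both are $\mathbb{G}_m$-equivariant over $\mathbb{A}^1$ and the identity at $t=1$), and it is automatically a morphism over $D$ (the two maps to $D$ agree on a dense open with separated target); hence restricting to $b\in D$ really shows that $\phi|_{\mathcal{X}_b}$ itself is an isomorphism, which is exactly what the pole argument for $\psi_k$ along $\{b\}\times\mathbb{A}^1$ needs.
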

 
\begin{proof}
We may assume that $\mathcal{X}\cap D$ is trivial for general member $D$ of $|mL|,\, m\gg 0$. By assumption, $(\mathcal{X},\mathcal{H}+jL)$ is an ample test configuration for all $j \gg 0$, and hence it suffices to show that $\mathcal{X}$ and $X\times \mathbb{A}^1$ is isomorphic in codimension $1$ by the fact that if $(\mathcal{X},\mathcal{H}+jL)$ is an almost trivial test configuration and normal then it is trivial. Let $a_i\in \mathbb{N},\, D_i$ be the irreducible divisors and $\mathcal{X}_0=\sum a_iD_i$ be the central fiber over $\mathbb{A}^1$. We can check that $p_0:\mathcal{X}_0 \to B$ is a flat proper fibration and hence all the $D_i$ dominates $B$ via $p_0$. For general $D$, $\mathcal{X}_0\cap D=\sum a_i(D_i\cap D)$ is the cycle decomposition and coincides to the integral cycle $X\cap D$. Therefore, $\mathcal{X}_0$ is integral. 

By the valuative criterion for properness \cite[II Theorem 4.7]{Ha}, we get a rational map $\psi :\mathcal{X}_0 \dashrightarrow X$ from $\phi :\mathcal{X} \dashrightarrow X\times \mathbb{A}^1$ the canonical birational map. Let $\Gamma$ be the normalized graph of $\phi$ and $p:\Gamma \to \mathcal{X}\, ,q:\Gamma \to X$ be the canonical projection respectively. Let $\Gamma_0$ be the central fiber of $\Gamma$ over $0\in\mathbb{A}^1$ and $E_0$ be the irreducible component of $\Gamma_0$ corresponding to $\mathcal{X}_0$. We must show that $q_{|E_0}$ is birational. Take the largest open subset $U\subset \mathcal{X}$ such that $U$ has a morphism $\iota:U\to \Gamma$ that is the local inverse of $p$. Let $U_0$ be the central fiber of $U$ over $\mathbb{A}^1$. Note that $U_0\ne\emptyset$ since $\mathrm{codim}(\mathcal{X}\setminus U)\ge2$. 

Take $D\in |mL|$ such that $\mathcal{X}\cap D$, $\mathcal{X}_0\cap D$, $X\cap D$, $E_0\cap D$ and $\Gamma\cap D$ are normal by the theorem of Bertini. We may assume that $\mathcal{X}\cap D$, $\mathcal{X}_0\cap D$, $X\cap D$, $E_0\cap D$ and $\Gamma\cap D$ are also connected. Indeed, if $\mathrm{dim}\, B\ge 2$, they are connected. Otherwise, $\mathrm{dim}\, B=1$. Then, we may assume that $\mathrm{deg}\, D=1$ and take a general fiber over $B$. We remark that $E_0\cap D\ne\emptyset$ because $E_0$ dominates $B$. Since $\mathcal{X}\cap D$ and $X_{\mathbb{A}^1|D}$ are isomorphic, $U\cap D$ and $\Gamma\cap D$ are birational equivalent. Assume that $q_{|E_0}:E_0\to X$ is not a birational morphism. Assume also that $q_{|E_0}$ is not dominant, then $q_{|E_0\cap D}$ is not birational for general $D$. This contradicts to that $U_0\cap D$ is embedded into $X$ via $q\circ \iota$. Hence $q_{|E_0}$ is dominant. Since we assume that $q_{|E_0}$ is not birational, $\mathrm{deg}(q_{|E_0})>1$. This contradicts to $\mathrm{deg}(q_{|E_0})=\mathrm{deg}(q_{|E_0\cap D})=1$. Therefore, $q_{|E_0}:E_0\to X\times\{0\}$ is a birational morphism. 
\end{proof}
\end{rem}

To prove Theorem \ref{singular} below, we need to calculate $W_{i}^{\Delta}$ of an arbitrary normalized semi fibration degeneration $(\mathcal{X},\mathcal{H})$ for $\pi$ and $i\ge 2$ as in Proposition \ref{cal}. Indeed, we can prove the logarithmic version of Proposition \ref{cal} similarly. However, the coefficients are more complicated when $i\ge2$ and we want to ignore the effects of higher codimensional points on $B$.

Let $f:(X,\Delta,H)\to (B,L)$ be a polarized algebraic fiber space pair. Suppose that $m=\mathrm{rel.dim}\, f$ and $n=\mathrm{dim}\,B$. First, we may assume that $H$ is ample by Lemma \ref{trlem}. Take general ample divisors $D_1,D_2,\cdots ,D_n\in |ML|$ for a fixed integer $M\gg0$ in the sense of the Bertini theorem. Let $f(j)$ and $g(j)$ be $-K_{(X,\Delta)}\cdot (H+jL)^{n+m-1}$ and $ (H+jL)^{n+m}$ respectively. Note that $f(j)$ and $g(j)$ depend only on $L$ and $H$. On the other hand, $$S(X,\Delta,H+jL)=\frac{f(j)}{g(j)}.$$ Note also that 
\[
(\mathcal{H}+jL)^{n+m+1}=\sum_{i=0}^n j^{n-i}\binom{n+m+1}{n-i}\mathcal{H}^{m+1+i}\cdot L^{n-i}.
\]

 Then we can calculate $W^{\Delta}_i$ as follows:
\begin{lem}\label{induction}
Let $f:(X,\Delta,H)\to (B,L)$ be a polarized algebraic fiber space pair with an ample line bundle $H$ and $(\mathcal{X},\mathcal{H})$ be a normal semiample test configuration for $(X,H)$ dominating $X_{\mathbb{A}^1}$. Suppose also that $m=\mathrm{rel.dim}\, f$, $n=\mathrm{dim}\,B$ and $(\mathcal{X},\mathcal{H})$ is normalized with respect to the central fiber. Then, there exist constants $C_{n-i+1},\cdots ,C_{n}$ independent from $(\mathcal{X},\mathcal{H})$ and $j$ such that
\begin{align*}
W_{i}^{\Delta}(\mathcal{X},\mathcal{H}) &= \binom{n+m}{n-i} \left(K^\mathrm{log}_{(\mathcal{X},\mathcal{D})/\mathbb{P}^1}\cdot L^{n-i} \cdot \mathcal{H}^{m+i}+\frac{S(X_b,\Delta_b,H_b)}{m+i+1}\mathcal{H}^{m+i+1}\cdot L^{n-i}\right)\\
&+\sum_{j=n-i+1}^n C_jJ^{\mathrm{NA}}(\mathcal{H}_{|\mathcal{X}\cap D_1 \cap D_2 \cap \cdots \cap D_{j}}),
\end{align*}
where $D_1,D_2,\cdots ,D_n\in|ML|$ are general ample divisors for sufficiently large integer $M>0$ that $ML$ is very ample. Here, $\mathcal{D}$ is the strict transformation of $\Delta\times\mathbb{P}^1$ on $\mathcal{X}$. Furthermore, 
\begin{align*}
W_{i} ^{\Delta}(\mathcal{X},\mathcal{H})&= M^{-(n-i)}\binom{n+m}{n-i} \Biggl(K^\mathrm{log}_{(\mathcal{X}\cap D_1 \cap D_2 \cap \cdots \cap D_{n-i},\mathcal{D}\cap D_1 \cap D_2 \cap \cdots \cap D_{n-i})/\mathbb{P}^1} \cdot \mathcal{H}_{|\mathcal{X}\cap D_1 \cap D_2 \cap \cdots \cap D_{n-i}}^{m+i}\\
&+\frac{S(X_b,\Delta_b,H_b)}{m+i+1}\mathcal{H}_{|\mathcal{X}\cap D_1 \cap D_2 \cap \cdots \cap D_{n-i}}^{m+i+1} \Biggr)+\sum_{j=n-i+1}^n C_jJ^{\mathrm{NA}}(\mathcal{H}_{|\mathcal{X}\cap D_1 \cap D_2 \cap \cdots \cap D_{j}}).
\end{align*}
\end{lem}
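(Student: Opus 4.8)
The plan is to compute $M^{\mathrm{NA}}_\Delta(\mathcal{X},\mathcal{H}+jL)$ as an explicit rational function of $j$ and read off its partial fraction decomposition. Write $N=n+m=\mathrm{dim}\,X$. First I would rewrite the non-Archimedean Mabuchi functional in intersection-theoretic form. By definition $M^{\mathrm{NA}}_\Delta = H^{\mathrm{NA}}_\Delta + (\mathcal{J}^{K_{(X,\Delta)}})^{\mathrm{NA}}$, and since $(K_{(X,\Delta)})_{\mathbb{P}^1}=\rho^*K_{(X_{\mathbb{P}^1},\Delta_{\mathbb{P}^1})/\mathbb{P}^1}$, the pullback terms in the entropy and in $(\mathcal{J}^{K_{(X,\Delta)}})^{\mathrm{NA}}$ cancel, while $N K_{(X,\Delta)}\cdot L^{N-1}=-S(X,\Delta,L)V$ turns the remaining $\mathcal{J}$-term into $S\cdot E^{\mathrm{NA}}$. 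This gives $M^{\mathrm{NA}}_\Delta(\mathcal{X},\mathcal{L})=V^{-1}K^{\mathrm{log}}_{(\mathcal{X},\mathcal{D})/\mathbb{P}^1}\cdot\overline{\mathcal{L}}^N+S(X,\Delta,L)E^{\mathrm{NA}}(\mathcal{X},\mathcal{L})$. Applying this with $\mathcal{L}=\mathcal{H}+jL$ and $V=V(H+jL)$, and using $E^{\mathrm{NA}}\cdot V=(\mathcal{H}+jL)^{N+1}/(N+1)$, yields
$$V(H+jL)M^{\mathrm{NA}}_\Delta(\mathcal{X},\mathcal{H}+jL)=K^{\mathrm{log}}_{(\mathcal{X},\mathcal{D})/\mathbb{P}^1}\cdot(\mathcal{H}+jL)^N+\frac{S(X,\Delta,H+jL)}{N+1}(\mathcal{H}+jL)^{N+1}.$$

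Next I would expand in powers of $j$, using that $L=\Pi^*L$ is pulled back from the $n$-dimensional base, so $L^{n+1}=0$ and only $j^0,\dots,j^n$ survive. From $(\mathcal{H}+jL)^N=\sum_{k=0}^n\binom{N}{k}L^k\mathcal{H}^{N-k}j^k$ the coefficient of $j^{n-i}$ in the first term is $\binom{n+m}{n-i}K^{\mathrm{log}}_{(\mathcal{X},\mathcal{D})/\mathbb{P}^1}\cdot L^{n-i}\mathcal{H}^{m+i}$, the first summand in the claim. For the second term I would split $S(X,\Delta,H+jL)=S_\infty+(S(X,\Delta,H+jL)-S_\infty)$, where $S_\infty:=\lim_{j\to\infty}S(X,\Delta,H+jL)$; computing with $f(j),g(j)$ and $L^{n+1}=0$ identifies $S_\infty=S(X_b,\Delta_b,H_b)$. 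The $S_\infty$-part contributes $\frac{S_\infty}{N+1}\binom{N+1}{n-i}L^{n-i}\mathcal{H}^{m+i+1}$ to the $j^{n-i}$ coefficient, which equals $\frac{S_\infty}{m+i+1}\binom{n+m}{n-i}\mathcal{H}^{m+i+1}L^{n-i}$ by the identity $\frac{1}{N+1}\binom{N+1}{n-i}=\frac{1}{m+i+1}\binom{n+m}{n-i}$, matching the second summand.

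The decisive step is to account for the remainder $\frac{1}{N+1}(S(X,\Delta,H+jL)-S_\infty)(\mathcal{H}+jL)^{N+1}$. Since $S(X,\Delta,H+jL)-S_\infty$ is $O(1/j)$ with a Laurent expansion $\sum_{l\ge1}s_l j^{-l}$ whose coefficients depend only on $(X,\Delta,H,L)$ and not on $(\mathcal{X},\mathcal{H})$, and since $(\mathcal{H}+jL)^{N+1}=\sum_{k=0}^n\binom{N+1}{k}\mathcal{H}^{N+1-k}L^k\,j^k$, the coefficient of $j^{n-i}$ in this remainder is a universal-constant combination of the numbers $\mathcal{H}^{N+1-k}\cdot L^k$ with $n-i+1\le k\le n$. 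Using the normalization hypothesis, Lemma \ref{dfnf}(1) gives $J^{\mathrm{NA}}(\mathcal{H}_{|\mathcal{X}\cap D_1\cap\cdots\cap D_k})=-E^{\mathrm{NA}}$ of the restricted configuration, and for general $D_l\in|ML|$ this equals a nonzero universal constant times $M^{-k}\mathcal{H}^{N+1-k}\cdot L^k$; hence each intersection number $\mathcal{H}^{N+1-k}\cdot L^k$ is a test-configuration-independent multiple of $J^{\mathrm{NA}}(\mathcal{H}_{|\mathcal{X}\cap D_1\cap\cdots\cap D_k})$. This produces the terms $\sum_{k=n-i+1}^n C_k J^{\mathrm{NA}}(\mathcal{H}_{|\mathcal{X}\cap D_1\cap\cdots\cap D_k})$ with $C_k$ independent of $(\mathcal{X},\mathcal{H})$, establishing the first formula.

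Finally I would pass from the first formula to the second exactly as in Proposition \ref{cal}: choosing $D_1,\dots,D_{n-i}\in|ML|$ general so that all relevant sections are normal, connected and flat over $\mathbb{P}^1$ (Bertini), the adjunction formula of \cite[\S16,\S17]{K+} applied to the normal Cartier divisors $\mathcal{X}\cap D_l$ converts $K^{\mathrm{log}}_{(\mathcal{X},\mathcal{D})/\mathbb{P}^1}\cdot L^{n-i}\mathcal{H}^{m+i}$ and $\mathcal{H}^{m+i+1}L^{n-i}$ into their restrictions to $\mathcal{X}\cap D_1\cap\cdots\cap D_{n-i}$, each cut introducing a factor $M^{-1}$ and accounting for the overall $M^{-(n-i)}$. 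The main obstacle is the third step: one must carry out the rational-function bookkeeping to verify that precisely the range $n-i+1\le k\le n$ contributes and that the extracted coefficients are genuinely independent of $(\mathcal{X},\mathcal{H})$, and one must choose the $D_l$ simultaneously general enough that every restricted test configuration appearing is again normalized, so that Lemma \ref{dfnf}(1) applies on each section.
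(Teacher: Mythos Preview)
Your approach is correct and essentially the same as the paper's: both start from the intersection-theoretic identity $V\cdot M^{\mathrm{NA}}_\Delta=K^{\mathrm{log}}_{(\mathcal{X},\mathcal{D})/\mathbb{P}^1}\cdot\mathcal{L}^N+\tfrac{S}{N+1}\mathcal{L}^{N+1}$, expand $S(X,\Delta,H+jL)=S(X_b,\Delta_b,H_b)+O(j^{-1})$, read off the $j^{n-i}$ coefficient, and use Lemma~\ref{dfnf}(1) together with the observation that each restricted configuration stays normalized to convert the residual $\mathcal{H}^{N+1-k}\cdot L^k$ terms into $J^{\mathrm{NA}}$'s; the second assertion then follows by iterated adjunction as in Proposition~\ref{cal}. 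Your write-up is in fact more explicit than the paper's on the binomial bookkeeping and on why only the range $n-i+1\le k\le n$ contributes; the one small slip is the factor $M^{-k}$ in your identification of $J^{\mathrm{NA}}(\mathcal{H}_{|\mathcal{X}\cap D_1\cap\cdots\cap D_k})$ with $\mathcal{H}^{N+1-k}\cdot L^k$ (the $M^k$ from the $D_l$'s cancels between numerator and denominator of $E^{\mathrm{NA}}$), but this is harmless since the paper reduces to $M=1$ anyway.
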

\begin{ntt}
If there is no fear of confusion, we will omit $\mathcal{X}\cap D_1 \cap D_2 \cap \cdots \cap D_{j}$ and denote as
\[
J^{\mathrm{NA}}(\mathcal{H}_{|\mathcal{X}\cap D_1 \cap D_2 \cap \cdots \cap D_{j}})=J^{\mathrm{NA}}(\mathcal{X}\cap D_1 \cap D_2 \cap \cdots \cap D_{j},\mathcal{H}_{|\mathcal{X}\cap D_1 \cap D_2 \cap \cdots \cap D_{j}}).
\]
\end{ntt}

\begin{proof}[Proof of Lemma \ref{induction}]
Note that $(\mathcal{X}\cap D_1,\mathcal{H}|_{\mathcal{X}\cap D_1})$ is also normalized with respect to the central fiber (cf., Definition \ref{dno}). Replacing $L$ by $ML$, we may assume that $M=1$.
For the first assertion,
\begin{align*}
S(X,\Delta,H+jL)=&-(n+m)\frac{K_{(X,\Delta)}\cdot (H+jL)^{n+m-1}}{(H+jL)^{n+m}}\\
=&S(X_b,\Delta_b,H_b)+O(j^{-1})
\end{align*}
where the coefficients of $O(j^{-1})$ depend only on $n,m,H$ and $L$. On the other hand, the $j^{n-i}$-term of $(\mathcal{H}+jL)^{n+m+1}$ is as follows
\[
\binom{n+m}{n-i}\frac{m+i+1}{n+m+1}\mathcal{H}^{m+i+1}\cdot L^{n-i},
\]
and the higher degree terms of $(\mathcal{H}+jL)^{n+m+1}$ are the forms of $C_{n-i+l}J^{\mathrm{NA}}(\mathcal{H}_{|\mathcal{X}\cap D_1 \cap D_2 \cap \cdots \cap D_{n-i+l}})$ where $l>0$ and $C_{n-i+l}$ is a constant depending only on $l$ by the argument before this lemma. Here, we have $$J^{\mathrm{NA}}(\mathcal{H}_{|\mathcal{X}\cap D_1 \cap D_2 \cap \cdots \cap D_{k}})=-E^{\mathrm{NA}}(\mathcal{H}_{|\mathcal{X}\cap D_1 \cap D_2 \cap \cdots \cap D_{k}})$$ for $k$ by Lemma \ref{dfnf}. Thus, the first assertion holds. We can take $D_i\in |L|$ so general that $K_{\mathcal{X}\cap D_1\cap\cdots\cap D_{i-1}}$, $\mathcal{X}_{0,\mathrm{red}}\cap D_1\cap\cdots\cap D_{i-1}$ and $\mathcal{D}\cap D_1\cap\cdots\cap D_{i-1}$ are Cartier on any codimension 1 point of $\mathcal{X}\cap D_1\cap\cdots\cap D_i$. By the adjunction formula \cite[\S16]{K+}, for example, we have
\[
K^\mathrm{log}_{(\mathcal{X},\mathcal{D})/\mathbb{P}^1}|_{\mathcal{X}\cap D_1}=K^\mathrm{log}_{(\mathcal{X}\cap D_1,\mathcal{D}\cap D_1)/\mathbb{P}^1}-\mathcal{X}\cap D_1|_{\mathcal{X}\cap D_1},
\]
as in the proof of Proposition \ref{cal} and hence we obtain the second assertion. 
\end{proof}

On the other hand, we calculate $W_i$ as Lemma \ref{induction} when there exists a line bundle $L_0$ on $B$ such that $K_{(X,\Delta)}\equiv\lambda\,H+f^*L_0$ for $\lambda\in\mathbb{Q}$.
\begin{prop}\label{me}
Let $f:(X,\Delta,H)\to (B,L)$ be a polarized algebraic fiber space pair with $m=\mathrm{rel.dim}\, f$ and $n=\mathrm{dim}\,B$. Furthermore, suppose that $H$ is ample and $L$ is very ample. Let $(\mathcal{X},\mathcal{H})$ be an arbitrary normal semiample test configuration for $(X,H)$ normalized with respect to the central fiber. Suppose also that $(\mathcal{X},\mathcal{H})$ dominates $X_{\mathbb{A}^1}$ and that there exist $\lambda \in \mathbb{Q}$ and a line bundle $L_0$ on $B$ such that $\lambda H\equiv K_X+\Delta+f^*L_0$. Then
\begin{align}
\left(\binom{n+m}{n-k}(H^{m+k}\cdot L^{n-k})\right)^{-1}&W^{\Delta}_k(\mathcal{X},\mathcal{H})= H_{\Delta\cap D_1 \cap D_2 \cap \cdots \cap D_{n-k}}^{\mathrm{NA}}(\mathcal{H}_{|\mathcal{X}\cap D_1 \cap D_2 \cap \cdots \cap D_{n-k}})\nonumber \\
+&\lambda \bigl(I^{\mathrm{NA}}(\mathcal{H}_{| \mathcal{X}\cap D_1 \cap D_2 \cap \cdots \cap D_{n-k}})-(k+1)J^{\mathrm{NA}}(\mathcal{H}_{| \mathcal{X}\cap D_1 \cap D_2 \cap \cdots \cap D_{n-k}})\bigr) \nonumber
\end{align}
if $J^{\mathrm{NA}}(\mathcal{H}_{| \mathcal{X}\cap D_1 \cap D_2 \cap \cdots \cap D_{n-k+1}}),\cdots ,J^{\mathrm{NA}}(\mathcal{H}_b)=0$ where $D_1,\cdots, D_{n-k}\in |L|$ and $b\in B$ are general.
\end{prop}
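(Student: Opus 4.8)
The plan is to start from Lemma \ref{induction} and feed in the vanishing hypotheses. Writing $\mathcal{Y}=\mathcal{X}\cap D_1\cap\cdots\cap D_{n-k}$, $Y=X\cap D_1\cap\cdots\cap D_{n-k}$, $\Delta_Y=\Delta\cap D_1\cap\cdots\cap D_{n-k}$, and $B_k=D_1\cap\cdots\cap D_{n-k}\subset B$ (so that $g\coloneq f|_Y\colon Y\to B_k$ is an algebraic fiber space of relative dimension $m$ with $\dim B_k=k$), the assumption $J^{\mathrm{NA}}(\mathcal{H}_{|\mathcal{X}\cap D_1\cap\cdots\cap D_j})=0$ for $n-k+1\le j\le n$ and $J^{\mathrm{NA}}(\mathcal{H}_b)=0$ kills every term of the tail $\sum_{j=n-k+1}^{n}C_jJ^{\mathrm{NA}}(\cdots)$ in Lemma \ref{induction}. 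With $L$ very ample ($M=1$) this leaves $W^{\Delta}_k(\mathcal{X},\mathcal{H})=\binom{n+m}{n-k}\bigl(K^{\mathrm{log}}_{(\mathcal{Y},\mathcal{D}\cap D_1\cap\cdots\cap D_{n-k})/\mathbb{P}^1}\cdot\mathcal{H}_{|\mathcal{Y}}^{m+k}+\tfrac{S(X_b,\Delta_b,H_b)}{m+k+1}\mathcal{H}_{|\mathcal{Y}}^{m+k+1}\bigr)$. I would then normalize by $\binom{n+m}{n-k}V_Y$ with $V_Y=H^{m+k}\cdot L^{n-k}$, convert the $K^{\mathrm{log}}$-term via Lemma \ref{dfnf}(2) into $H^{\mathrm{NA}}_{\Delta_Y}(\mathcal{H}_{|\mathcal{Y}})+R^{\mathrm{NA}}_{\Delta_Y}(\mathcal{H}_{|\mathcal{Y}})$, recognize the second summand as $S(X_b,\Delta_b,H_b)\,E^{\mathrm{NA}}(\mathcal{H}_{|\mathcal{Y}})$ (using $E^{\mathrm{NA}}=\mathcal{H}_{|\mathcal{Y}}^{m+k+1}/((m+k+1)V_Y)$), and use $R^{\mathrm{NA}}_{\Delta_Y}=(\mathcal{J}^{K_{(Y,\Delta_Y)}})^{\mathrm{NA}}-S(Y,\Delta_Y,H_{|Y})E^{\mathrm{NA}}$ together with $M^{\mathrm{NA}}_{\Delta_Y}=H^{\mathrm{NA}}_{\Delta_Y}+(\mathcal{J}^{K_{(Y,\Delta_Y)}})^{\mathrm{NA}}$. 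This rewrites the left-hand side as $M^{\mathrm{NA}}_{\Delta_Y}(\mathcal{H}_{|\mathcal{Y}})+\bigl(S(X_b,\Delta_b,H_b)-S(Y,\Delta_Y,H_{|Y})\bigr)E^{\mathrm{NA}}(\mathcal{H}_{|\mathcal{Y}})$.

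Next I would bring in the log-Fano relation. Repeated adjunction along the general $D_i\in|L|$ gives $K_{(Y,\Delta_Y)}\equiv\lambda H_{|Y}+g^*P''$, where $P''=(\,\pm L_0+(n-k)L)|_{B_k}$ is a divisor class pulled back from $B_k$; set $P'=g^*P''$. By linearity of $(\mathcal{J}^{\bullet})^{\mathrm{NA}}$ in its superscript, $(\mathcal{J}^{K_{(Y,\Delta_Y)}})^{\mathrm{NA}}=\lambda(\mathcal{J}^{H_{|Y}})^{\mathrm{NA}}+(\mathcal{J}^{P'})^{\mathrm{NA}}$. Since $(\mathcal{Y},\mathcal{H}_{|\mathcal{Y}})$ is again normalized with respect to the central fiber, Lemma \ref{dfnf}(1) and the definitions in Definition \ref{nadef} yield the identity $(\mathcal{J}^{H_{|Y}})^{\mathrm{NA}}=I^{\mathrm{NA}}-J^{\mathrm{NA}}$ (because $J^{\mathrm{NA}}=-E^{\mathrm{NA}}$ forces $\mathcal{H}_{|\mathcal{Y}}\cdot L_{\mathbb{P}^1}^{m+k}=0$, and then $V_Y^{-1}(L_{\mathbb{P}^1}\cdot\mathcal{H}_{|\mathcal{Y}}^{m+k})=I^{\mathrm{NA}}-(m+k+1)J^{\mathrm{NA}}$). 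Restricting to a general fiber, $P'|_{X_b}=0$, so $K_{(X_b,\Delta_b)}\equiv\lambda H_b$ and $S(X_b,\Delta_b,H_b)=-m\lambda$, while $S(Y,\Delta_Y,H_{|Y})=-(m+k)\lambda-(m+k)\,(P'\cdot H_{|Y}^{m+k-1})/V_Y$. Plugging in, the $(P'\cdot H_{|Y}^{m+k-1})/V_Y$ contributions in $(\mathcal{J}^{P'})^{\mathrm{NA}}$ and in the $S$-difference cancel exactly, and the whole claim collapses to the single assertion
\[
V_Y^{-1}\bigl(P'_{\mathbb{P}^1}\cdot\mathcal{H}_{|\mathcal{Y}}^{m+k}\bigr)=0 .
\]

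The heart of the argument is establishing this last vanishing, and I expect it to be the main obstacle. Let $G\colon\overline{\mathcal{Y}}\to B_k$ be the composite of the canonical map to $B_k\times\mathbb{P}^1$ with the projection, so that $P'_{\mathbb{P}^1}=G^*P''$ and, by the projection formula, $P'_{\mathbb{P}^1}\cdot\mathcal{H}_{|\mathcal{Y}}^{m+k}=P''\cdot\gamma$ for the $1$-cycle $\gamma\coloneq G_*\bigl(\mathcal{H}_{|\mathcal{Y}}^{m+k}\bigr)$ on $B_k$. For an ample class $\beta$ on $B_k$ and a general $A\in|\beta|$, the slice $\mathcal{X}\cap D_1\cap\cdots\cap D_{n-k}\cap f^*A$ is a normalized semiample test configuration, so Lemma \ref{dfnf}(1) gives $\beta\cdot\gamma=\mathcal{H}_{|\mathcal{Y}}^{m+k}|_{\text{slice}}=-(m+k)V_{\text{slice}}\,J^{\mathrm{NA}}(\text{slice})\le 0$, because $J^{\mathrm{NA}}\ge 0$. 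Taking $\beta=L|_{B_k}$ and using the hypothesis $J^{\mathrm{NA}}(\mathcal{H}_{|\mathcal{X}\cap D_1\cap\cdots\cap D_{n-k+1}})=0$ shows $(L|_{B_k})\cdot\gamma=0$. Thus the linear functional $\beta\mapsto\beta\cdot\gamma$ on $N^1(B_k)_{\mathbb{R}}$ is $\le 0$ on the ample cone yet vanishes at the interior point $[L|_{B_k}]$; perturbing $L|_{B_k}$ by $\pm\varepsilon w$ (which stays ample for small $\varepsilon$) forces $w\cdot\gamma=0$ for every $w$, hence $\gamma=0$ numerically and $P''\cdot\gamma=0$. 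This proves $V_Y^{-1}(P'_{\mathbb{P}^1}\cdot\mathcal{H}_{|\mathcal{Y}}^{m+k})=0$ and completes the identification with $H^{\mathrm{NA}}_{\Delta_Y}+\lambda\bigl(I^{\mathrm{NA}}-(k+1)J^{\mathrm{NA}}\bigr)$.
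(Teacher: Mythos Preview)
Your proof is correct and follows essentially the same strategy as the paper: start from Lemma \ref{induction}, use the vanishing hypotheses to kill the tail, recognize the remaining terms as $H^{\mathrm{NA}}_{\Delta_Y}+R^{\mathrm{NA}}_{\Delta_Y}+S(X_b,\Delta_b,H_b)E^{\mathrm{NA}}$, and then exploit $K_{(X,\Delta)}\equiv\lambda H+(\text{base pullback})$ together with $S(X_b,\Delta_b,H_b)=-m\lambda$ to reach $H^{\mathrm{NA}}_{\Delta_Y}+\lambda(I^{\mathrm{NA}}-(k+1)J^{\mathrm{NA}})$. The paper streamlines the algebra by computing $R^{\mathrm{NA}}_{\Delta_Y}$ directly via Lemma \ref{dfnf}(2) as $V_Y^{-1}p^*K_{(X,\Delta)}\cdot\mathcal{H}_{|\mathcal{Y}}^{m+k}$ and invoking \cite[Lemma~7.25]{BHJ} to rewrite $p^*H\cdot\mathcal{H}_{|\mathcal{Y}}^{m+k}$ as $V_Y(I^{\mathrm{NA}}-(m+k+1)J^{\mathrm{NA}})$, whereas you route through $M^{\mathrm{NA}}_{\Delta_Y}$ and $(\mathcal{J}^{\bullet})^{\mathrm{NA}}$; this is only a reorganization.

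The one genuine point of comparison is the ``heart'' of your argument, the vanishing $P'_{\mathbb{P}^1}\cdot\mathcal{H}_{|\mathcal{Y}}^{m+k}=0$. The paper isolates this as a separate Lemma (\ref{melem}), proved by writing $F=\mathcal{H}-H_{\mathbb{A}^1}$ and telescoping $\mathcal{H}^{m+k}\cdot p^*f^*L=-\sum_i F\cdot\mathcal{H}^{m+k-1-i}\cdot H_{\mathbb{P}^1}^{i}\cdot p^*f^*L$, observing each summand is nonnegative (nef restrictions to the effective $F$) and the total is zero, then running the $kL\pm M$ trick. Your version---push $\mathcal{H}_{|\mathcal{Y}}^{m+k}$ down to a $1$-cycle $\gamma$ on $B_k$, note that $\beta\cdot\gamma\le 0$ for every ample $\beta$ via $J^{\mathrm{NA}}\ge 0$ on slices, and that $(L|_{B_k})\cdot\gamma=0$ forces the linear functional to vanish on all of $N^1(B_k)_{\mathbb{R}}$---is a cleaner, more conceptual packaging of the same idea and yields the same conclusion.
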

\begin{proof}
It follows from Lemma \ref{dfnf} that
\[
J^{\mathrm{NA}}(\mathcal{H}_{|\mathcal{X}\cap D_1 \cap D_2 \cap \cdots \cap D_{n-k}})=-E^{\mathrm{NA}}(\mathcal{H}_{|\mathcal{X}\cap D_1 \cap D_2 \cap \cdots \cap D_{n-k}})
\]
and if $J^{\mathrm{NA}}(\mathcal{H}_{| \mathcal{X}\cap D_1 \cap D_2 \cap \cdots \cap D_{n-k+1}}),\cdots ,J^{\mathrm{NA}}(\mathcal{H}_b)=0$ for general $D_j$ and $b$, we have
\begin{align*}
(H^{m+k}\cdot L^{n-k})&R_{\Delta\cap D_1 \cap D_2 \cap \cdots \cap D_{n-k}}^{\mathrm{NA}}(\mathcal{H}_{|\mathcal{X}\cap D_1 \cap D_2 \cap \cdots \cap D_{n-k}})\\
=&p^*K_{(X,\Delta)|_ {\mathcal{X}\cap D_1 \cap D_2 \cap \cdots \cap D_{n-k}}}\cdot (\mathcal{H}_{|\mathcal{X}\cap D_1 \cap D_2 \cap \cdots \cap D_{n-k}})^{m+k}\\
= &p^*(\lambda\, H-f^*L_0)\cdot (\mathcal{H}_{|\mathcal{X}\cap D_1 \cap D_2 \cap \cdots \cap D_{n-k}})^{m+k}\\
= &\lambda\, p^*H\cdot (\mathcal{H}_{|\mathcal{X}\cap D_1 \cap D_2 \cap \cdots \cap D_{n-k}})^{m+k},
\end{align*}
where $p:\mathcal{X}\to X$ is the composition of canonical morphisms $\mathcal{X}\to X_{\mathbb{A}^1}$ and $X_{\mathbb{A}^1}\to X$. The last equality follows from Lemma \ref{melem} below inductively. Therefore, we can see as \cite[Lemma 7.25]{BHJ},
\begin{align*}
p^*H\cdot (\mathcal{H}_{|\mathcal{X}\cap D_1 \cap D_2 \cap \cdots \cap D_{n-k}})^{m+k}=(H^{m+k}\cdot L^{n-k})\Bigl(&I^{\mathrm{NA}}(\mathcal{H}_{|\mathcal{X}\cap D_1 \cap D_2 \cap \cdots \cap D_{n-k}})\\
&-(m+k+1)J^{\mathrm{NA}}(\mathcal{H}_{|\mathcal{X}\cap D_1 \cap D_2 \cap \cdots \cap D_{n-k}})\Bigr)
\end{align*}
and $S(X_b,\Delta_b,H_b)=-m\lambda$. Therefore, the assertion follows from Lemma \ref{induction}. 
\end{proof}
\begin{lem}\label{melem}
Notations as in Proposition \ref{me}. If $J^{\mathrm{NA}}(\mathcal{H}_{|\mathcal{X}\cap  D_1})=0$ for a general ample divisor $D_1$ that is $\mathbb{Q}$-linearly equivalent to $L$, then for any line bundle $M$ on $B$,
\[
\mathcal{H}^{n+m}\cdot p^*f^*M=0.
\]
\end{lem}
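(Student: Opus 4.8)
The plan is to pass to the compactification over $\mathbb{P}^1$ and recast the statement as the vanishing of the pushed-forward curve class $g_*(\mathcal{H}^{n+m})$ on $B$, where $g=f\circ p\colon\mathcal{X}\to B$ is the canonical projection; note that $p^*f^*M=g^*M$ and, with the notation $\mathcal{X}\cap D=g^*D$ for a divisor $D$ on $B$, the hypothesis is nothing but the vanishing $\mathcal{H}^{n+m}\cdot g^*L=0$. Indeed, since $D_1\in|L|$ is general, $(\mathcal{X}\cap D_1,\mathcal{H}_{|\mathcal{X}\cap D_1})$ is a normal semiample test configuration normalized with respect to its central fiber, so by Lemma \ref{dfnf} and the definition of $E^{\mathrm{NA}}$ I would compute
\[
\mathcal{H}^{n+m}\cdot g^*L=\mathcal{H}^{n+m}\cdot(\mathcal{X}\cap D_1)=(\mathcal{H}_{|\mathcal{X}\cap D_1})^{n+m}=(n+m)V(H_{|X\cap D_1})\,E^{\mathrm{NA}}(\mathcal{H}_{|\mathcal{X}\cap D_1})=-(n+m)V(H_{|X\cap D_1})\,J^{\mathrm{NA}}(\mathcal{H}_{|\mathcal{X}\cap D_1})=0.
\]
By the projection formula $\mathcal{H}^{n+m}\cdot g^*M=M\cdot g_*(\mathcal{H}^{n+m})$ for every $M$, so it suffices to prove $g_*(\mathcal{H}^{n+m})=0$ in $N_1(B)$.

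The heart of the argument is to locate this curve class in the Mori cone. I would show $-g_*(\mathcal{H}^{n+m})\in\overline{NE}(B)$ by testing it against nef classes. Fix an ample $A$ on $B$, take $k\gg0$ with $kA$ very ample, and choose a general $D_A\in|kA|$; by Bertini and the adjunction bookkeeping of Proposition \ref{cal}, $(\mathcal{X}\cap D_A,\mathcal{H}_{|\mathcal{X}\cap D_A})$ is again a normal semiample test configuration normalized with respect to its central fiber, whence
\[
\mathcal{H}^{n+m}\cdot g^*A=\tfrac1k(\mathcal{H}_{|\mathcal{X}\cap D_A})^{n+m}=-\tfrac{n+m}{k}V(H_{|X\cap D_A})\,J^{\mathrm{NA}}(\mathcal{H}_{|\mathcal{X}\cap D_A})\le0,
\]
using $V>0$ and the universal inequality $J^{\mathrm{NA}}\ge0$. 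Letting $A$ vary and passing to limits yields $\mathcal{H}^{n+m}\cdot g^*N\le0$ for every nef class $N$ on $B$; equivalently, $-g_*(\mathcal{H}^{n+m})$ pairs non-negatively with $\mathrm{Nef}(B)$, so it lies in $\overline{NE}(B)=\mathrm{Nef}(B)^{\vee}$.

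To finish, since $L$ is ample and $L\cdot(-g_*(\mathcal{H}^{n+m}))=-\mathcal{H}^{n+m}\cdot g^*L=0$ by the first step, Kleiman's criterion forces $-g_*(\mathcal{H}^{n+m})=0$, hence $g_*(\mathcal{H}^{n+m})=0$ and $\mathcal{H}^{n+m}\cdot p^*f^*M=M\cdot g_*(\mathcal{H}^{n+m})=0$ for every line bundle $M$ on $B$. I expect the one delicate point to be pinning the sign of the curve class: the naive conclusion of the first paragraph only gives numerical orthogonality to $L$, and what actually places $-g_*(\mathcal{H}^{n+m})$ on the correct side of $\overline{NE}(B)$ is the positivity $J^{\mathrm{NA}}\ge0$ for normalized configurations (via Lemma \ref{dfnf}), which forces $E^{\mathrm{NA}}\le0$ and thus $\mathcal{H}^{n+m}\cdot g^*N\le0$ for all nef $N$. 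Once this sign is secured, the remaining ingredients—Bertini, restriction to general hyperplane sections, and Kleiman's criterion—are routine.
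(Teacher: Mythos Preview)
Your argument is correct and takes a genuinely different route from the paper's. The paper argues directly on $\overline{\mathcal{X}}$: writing $F=\mathcal{H}-H_{\mathbb{P}^1}$ (supported in the central fiber), one has the telescoping identity
\[
\mathcal{H}^{n+m}\cdot g^*L=-\sum_{i=0}^{n+m-1}F\cdot\mathcal{H}^{\,n+m-1-i}\cdot H_{\mathbb{P}^1}^{\,i}\cdot g^*L,
\]
and each summand has a sign because $\mathcal{H}$, $H_{\mathbb{P}^1}$ and $g^*L$ restrict to nef classes on the support of $F$; since the total is $0$, every term vanishes. One then replaces $L$ by $kL\pm M$ with $k\gg0$ and repeats the nefness argument to squeeze $\mathcal{H}^{n+m}\cdot g^*M$ to $0$.

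You instead push the $1$-cycle $\mathcal{H}^{n+m}$ forward to $B$ and use cone duality: testing against an arbitrary ample $A$ via a general $D_A\in|kA|$ and the universal inequality $J^{\mathrm{NA}}\ge0$ gives $-g_*(\mathcal{H}^{n+m})\in\overline{NE}(B)$, and Kleiman's criterion with $L\cdot(-g_*(\mathcal{H}^{n+m}))=0$ finishes. This is cleaner conceptually: it separates the sign input ($J^{\mathrm{NA}}\ge0$ on every hyperplane slice) from the hypothesis (orthogonality to $L$) and avoids the telescoping trick, at the cost of invoking the Mori cone machinery. The paper's proof stays entirely within elementary intersection theory on $\overline{\mathcal{X}}$ and needs no cone statement, but has to track the decomposition and then run the $kL\pm M$ sandwiching step. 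One small remark: your appeal to Proposition~\ref{cal} for $D_A\in|kA|$ (rather than $|kL|$) is fine, since the Bertini-type verifications there use only that the linear system is very ample on $B$, not that it is a multiple of $L$.
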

\begin{proof}
Let $F=\mathcal{H}-H_{\mathbb{A}^1}$. Since $\mathcal{H}$ is normalized, $F$ is an effective $\mathbb{Q}$-Cartier $\mathbb{Q}$-divisor and note that $ \mathcal{H}|_{F}$ and $H_{\mathbb{P}^1}|_F$ are nef. By assumption, we have
\begin{align*}
0=\mathcal{H}^{n+m}\cdot p^*f^*L=-\sum_{i=0}^{n+m-1} F\cdot \mathcal{H}^{n+m-1-i}\cdot H_{\mathbb{P}^1}^{i}\cdot p^*f^*L
\end{align*}
since $H_{\mathbb{P}^1}^{n+m}\cdot p^*f^*L=0$. On the other hand, $p^*f^*L|_{F}$ is also nef and hence $F\cdot \mathcal{H}^{n+m-1-i}\cdot H_{\mathbb{P}^1}^{i}\cdot p^*f^*L= 0$ for $0\le i\le n+m-1$. Take $k\gg0$ such that $kL\pm M$ are very ample on $B$. Then $\pm F\cdot \mathcal{H}^{n+m-1-i}\cdot H_{\mathbb{P}^1}^{i}\cdot p^*f^*M=F\cdot \mathcal{H}^{n+m-1-i}\cdot H_{\mathbb{P}^1}^{i}\cdot p^*f^*(kL\pm M)\ge 0$. Therefore,
\[
\sum_{i=0}^{n+m-1} F\cdot \mathcal{H}^{n+m-1-i}\cdot H_{\mathbb{P}^1}^{i}\cdot p^*f^*M=-\mathcal{H}^{n+m}\cdot p^*f^*M=0.
\]
Hence, we obtain Lemma \ref{melem} and Proposition \ref{me}.
\end{proof}

Now, we can show that klt Calabi-Yau fibrations are $\mathfrak{f}$-stable as the well-known theorem in K-stability \cite[Theorem 4.1]{OS}.
 
\begin{thm}\label{kds}
Let $f:(X,\Delta,H)\to (B,L)$ be a polarized algebraic fiber space pair with a line bundle $L_0$ on $B$. Suppose that $(X,\Delta,H)$ is a klt polarized pair and $K_X+\Delta\equiv f^*L_0$. Then $f$ is $\mathfrak{f}$-stable.
\end{thm}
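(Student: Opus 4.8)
The plan is to reduce everything to nonnegativity and strict positivity of the non-Archimedean entropy, exploiting that $K_X+\Delta\equiv f^*L_0$ puts us in the $\lambda=0$ case of Proposition \ref{me}. First I would use Lemma \ref{trlem} to assume $H$ is ample and rescale so that $L$ is very ample, making the slicing divisors $D_1,\dots,D_n\in|L|$ of Lemma \ref{induction} and Proposition \ref{me} available. Since $K_X+\Delta\equiv f^*L_0$ reads $\lambda H\equiv K_X+\Delta+f^*(-L_0)$ with $\lambda=0$, Proposition \ref{me} gives, for every normal normalized semiample test configuration $(\mathcal{X},\mathcal{H})$ dominating $X_{\mathbb{A}^1}$ and each $k$,
\[
W^{\Delta}_k(\mathcal{X},\mathcal{H})=\binom{n+m}{n-k}(H^{m+k}\cdot L^{n-k})\,H^{\mathrm{NA}}_{\Delta\cap D_1\cap\cdots\cap D_{n-k}}\!\left(\mathcal{H}_{|\mathcal{X}\cap D_1\cap\cdots\cap D_{n-k}}\right),
\]
valid whenever the higher slice functionals $J^{\mathrm{NA}}(\mathcal{H}_{|\mathcal{X}\cap D_1\cap\cdots\cap D_{j}})$, $j>n-k$, vanish (the $\lambda$-term of Proposition \ref{me} disappears). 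The coefficient is strictly positive, so all sign information in $W^\Delta_k$ is carried by the entropy of a complete-intersection slice.

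The two inputs I need are local. By the Bertini theorem, exactly as in the proof of Proposition \ref{cal}, the slices $X\cap D_1\cap\cdots\cap D_j$ and their test configurations are normal, and since the $D_i$ are general members of the base-point-free system $f^*|L|$, restricting the klt pair $(X,\Delta)$ to them preserves klt; moreover $K_{X\cap D_1\cap\cdots}+\Delta\cap\cdots$ remains numerically a pullback from $B\cap D_1\cap\cdots$, so each slice is again a klt log Calabi-Yau fibration. Consequently every slice entropy, written as $V^{-1}\sum_E A_{(X,\Delta)}(v_E)(E\cdot\overline{\mathcal{H}}^{\,d})$ with $d$ the dimension of the slice as in Definition \ref{nadef}, is a sum of nonnegative terms: the log discrepancies of nontrivial $v_E$ are positive by klt, and $E\cdot\overline{\mathcal{H}}^{\,d}\ge0$ since $E$ is effective and $\mathcal{H}$ is semiample. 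Hence each such $H^{\mathrm{NA}}\ge0$.

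I would then run a cascade on $k$. At $k=0$ Proposition \ref{me} is unconditional and gives $W^\Delta_0=c_0\,H^{\mathrm{NA}}_{\Delta_b}(\mathcal{H}_b)\ge0$ on a general fiber. Inductively, if $W^\Delta_0=\cdots=W^\Delta_{k-1}=0$, then, having already established the slice $J$-vanishings needed to apply Proposition \ref{me} at level $k-1$, the equality $W^\Delta_{k-1}=0$ forces $H^{\mathrm{NA}}_{\dots}(\mathcal{H}_{|\mathcal{X}\cap D_1\cap\cdots\cap D_{n-k+1}})=0$; the key Lemma $(\star)$ below upgrades this to $J^{\mathrm{NA}}=0$ on that slice, which is precisely the hypothesis that lets Proposition \ref{me} yield $W^\Delta_k=c_k\,H^{\mathrm{NA}}_{\dots}\ge0$. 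This gives $\mathfrak{f}$-semistability (and recovers the fiberwise content of \cite[Theorem 4.1]{OS}). For the strict part, running the cascade up to $k=n$ shows that $W^\Delta_0=\cdots=W^\Delta_{n-1}=0$ makes all slice $J$-functionals vanish, whence $W^\Delta_n=(H^{n+m})\,H^{\mathrm{NA}}_\Delta(\mathcal{X},\mathcal{H})$; if this vanished, Lemma $(\star)$ together with \cite[Theorem 7.9]{BHJ} would force $(\mathcal{X},\mathcal{H})$ to be trivial in codimension one, i.e. almost trivial, so $W^\Delta_n>0$ for every non-almost-trivial degeneration.

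The crux, and the step I expect to be the main obstacle, is Lemma $(\star)$: if $(Y,\Gamma)$ is klt and $(\mathcal{Y},\mathcal{G})$ is a normal, normalized semiample test configuration dominating $Y_{\mathbb{A}^1}$, then $H^{\mathrm{NA}}_\Gamma(\mathcal{Y},\mathcal{G})=0$ implies $E^{\mathrm{NA}}(\mathcal{Y},\mathcal{G})=0$, equivalently $J^{\mathrm{NA}}=0$ by Lemma \ref{dfnf}. The normalization hypothesis is essential: $D=\overline{\mathcal{G}}-G_{\mathbb{P}^1}$ is effective with support avoiding the strict transform $\hat{Y}$, so every component $E$ of $D$ has $A_{(Y,\Gamma)}(v_E)>0$; klt positivity and $H^{\mathrm{NA}}=0$ then force $E\cdot\overline{\mathcal{G}}^{\,d}=0$ for exactly these $E$, where $d=\dim Y$. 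A telescoping argument using nefness of $\overline{\mathcal{G}}$ and $G_{\mathbb{P}^1}$ propagates this to $E\cdot\overline{\mathcal{G}}^{\,d-r}G_{\mathbb{P}^1}^{\,r}=0$ for all $r$, and hence $\overline{\mathcal{G}}^{\,d+1}=G_{\mathbb{P}^1}^{\,d+1}=0$, i.e. $E^{\mathrm{NA}}=0$. This is the logarithmic, semiample analogue of the computation behind \cite[Proposition 9.12]{BHJ}; the delicate points are to confirm that the discrepancy-positive components absorb all of $D$ (guaranteed by normalization) and that the positivity of the mixed intersection numbers survives with $\mathcal{G}$ only semiample rather than ample.
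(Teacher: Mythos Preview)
Your overall strategy matches the paper's proof: reduce to Proposition \ref{me} with $\lambda=0$, use that klt is preserved under general hyperplane slices pulled back from $|L|$, and run a cascade on $k$. The paper phrases the induction slightly differently---it picks the largest $k$ for which the $(n-k)$-fold slice $(\mathcal{X}\cap D_1\cap\cdots\cap D_{n-k},\mathcal{H}|_{\ldots})$ is trivial and then shows $W^\Delta_{k+1}>0$ directly---but the content is the same.

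The gap is in your Lemma $(\star)$. The telescoping step does not go through as written: from $E\cdot\overline{\mathcal{G}}^{\,d}=0$ with $\overline{\mathcal{G}}|_E$ and $G_{\mathbb{P}^1}|_E$ merely nef you \emph{cannot} deduce $E\cdot\overline{\mathcal{G}}^{\,d-r}\cdot G_{\mathbb{P}^1}^{\,r}=0$ for $r>0$; think of $E\cong\mathbb{P}^1\times\mathbb{P}^1$ with the two classes equal to the two rulings. Moreover, $D=\overline{\mathcal{G}}-G_{\mathbb{P}^1}$ need not be effective for a normalized semiample test configuration---normalization only forces the $\hat{Y}$-coefficient to vanish---so the ``discrepancy-positive components absorb all of $D$'' premise is not available in the form you use it.

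The paper sidesteps this entirely by citing \cite[Proposition 9.16]{BHJ}, which says that for a klt pair one has $H^{\mathrm{NA}}_\Gamma(\mathcal{Y},\mathcal{G})>0$ unless $(\mathcal{Y},\mathcal{G})$ is trivial. In the normalized setting this is exactly your Lemma $(\star)$ (triviality $\Leftrightarrow J^{\mathrm{NA}}=0$ by \cite[Theorem 7.9]{BHJ} and Lemma \ref{dfnf}). The proof in \cite{BHJ} goes through an Izumi-type inequality comparing $A_{(Y,\Gamma)}(v_E)$ to orders of vanishing, not through an intersection-theoretic telescoping. Replace your attempted proof of $(\star)$ by that citation and your argument becomes the paper's.
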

\begin{proof}
Let $(\mathcal{X},\mathcal{H})$ be a normal semi fibration degeneration for $f$ (cf., Definition \ref{dedede}), $m=\mathrm{rel.dim}\, f$ and $n=\mathrm{dim}\,B$. By replacing $H$ by $H+jL$ for $j\gg0$, we may assume that $H$ is ample and $(\mathcal{X},\mathcal{H})$ is a normal semiample test configuration normalized with respect to the central fiber. We will prove that if $(\mathcal{X}\cap D_1\cap\cdots\cap D_{n-k},\mathcal{H}_{|\mathcal{X}\cap D_1\cap\cdots\cap D_{n-k}})$ is trivial and $(\mathcal{X}\cap D_1\cap\cdots\cap D_{n-k-1},\mathcal{H}_{|\mathcal{X}\cap D_1\cap\cdots\cap D_{n-k-1}})$ is not trivial for general divisor $D_i\in |ML|$ for $M\gg 0$, then $W_{k+1}^{\Delta}(\mathcal{X},\mathcal{H})>0$ by induction on $k$. Suppose that $(\mathcal{X}\cap D_1\cap\cdots\cap D_{n-k+1},\mathcal{H}_{|\mathcal{X}\cap D_1\cap\cdots\cap D_{n-k+1}})$ is trivial for general divisors $D_i\in |ML|$ for $M\gg 0$. By Proposition \ref{me}
\[
\binom{n+m}{n-k}^{-1}M^{-(n-k)}W^{\Delta}_k(\mathcal{X},\mathcal{H})= (H^{m+k}\cdot L^{n-k})H_{\Delta\cap D_1 \cap D_2 \cap \cdots \cap D_{n-k}}^{\mathrm{NA}}(\mathcal{H}_{|\mathcal{X}\cap D_1 \cap D_2 \cap \cdots \cap D_{n-k}})
\]
for general $D_{n-k}\in |ML|$. It is easy to see by \cite[Proposition 9.16]{BHJ} that if $(X,\Delta)$ is klt, then $(X\cap D_1\cap \cdots\cap D_{n-k},\Delta\cap D_1\cap \cdots\cap D_{n-k})$ is klt for general divisors $D_i\in |ML|$ and hence $H_{\Delta\cap D_1 \cap D_2 \cap \cdots \cap D_{n-k}}^{\mathrm{NA}}(\mathcal{H}_{|\mathcal{X}\cap D_1 \cap D_2 \cap \cdots \cap D_{n-k}})>0$ unless $(\mathcal{X}\cap D_1\cap\cdots\cap D_{n-k},\mathcal{H}_{|\mathcal{X}\cap D_1\cap\cdots\cap D_{n-k}})$ is the trivial test configuration.
\end{proof}

Next, we discuss \cite[Corollary 2.35]{DS4}. Dervan and Sektnan conjectured that fibrations whose all fiber is K-stable are fibration stable in the original sense \cite{DS}. They also conjectured the following,
\begin{conj}[Dervan-Sektnan \cite{DS4}]\label{conjjnoc}
Let $f:(X,H)\to (B,L)$ be a polarized smooth fibration such that any fiber is K-stable. If $\mathrm{Aut}(X_b,H_b)=0$ for any $b\in B$, $f$ is fibration stable in the original sense \cite{DS}.
\end{conj}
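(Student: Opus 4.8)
The plan is to verify, for every fibration degeneration $(\mathcal{X},\mathcal{H})$ whose minimum norm $\|(\mathcal{X},\mathcal{H})\|_m$ does not vanish, the positivity condition defining fibration stability in the original sense recalled in Remark \ref{fibstfstdsst}, namely that the first non-vanishing invariant among $W_0(\mathcal{X},\mathcal{H})$ and $W_1(\mathcal{X},\mathcal{H})$ is strictly positive. Since $W_0$ and $W_1$ are, respectively, codimension $0$ and codimension $1$ data of the base $B$, I would first reduce to a one-dimensional base. Cutting $B$ by $n-1$ general members $D_1,\dots,D_{n-1}\in|kL|$ replaces $f$ by $f^{-1}(B')\to B'$ over the smooth curve $B'=B\cap D_1\cap\cdots\cap D_{n-1}$, whose fibres are literally the original fibres $X_b$, hence still K-stable with $\mathrm{Aut}(X_b,H_b)=0$. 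By Proposition \ref{cal} this reduction multiplies $W_1$ by a fixed positive constant, so it preserves its sign, while the generic-fibre quantity $W_0$ is unaffected.

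Next I would dispose of $W_0$. By the formula $W_0(\mathcal{X},\mathcal{H})=\binom{n+m}{n}V(L)\,M^{\mathrm{NA}}(\mathcal{X}_b,\mathcal{H}_b)$ for general $b$, the restriction $(\mathcal{X}_b,\mathcal{H}_b)$ is a genuine test configuration for $(X_b,H_b)$. K-stability of $X_b$ gives $M^{\mathrm{NA}}(\mathcal{X}_b,\mathcal{H}_b)\ge 0$, so $W_0\ge0$ always; and the hypothesis $\mathrm{Aut}(X_b,H_b)=0$ forces $M^{\mathrm{NA}}(\mathcal{X}_b,\mathcal{H}_b)=0$ exactly when the normalization of $(\mathcal{X}_b,\mathcal{H}_b)$ is trivial, i.e.\ when the generic fibre degeneration is trivial. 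Hence every degeneration with non-trivial generic fibre already satisfies $W_0>0$ and poses no obstruction.

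It remains to treat the degenerations with trivial generic fibre, where $W_0=0$ and the decisive inequality becomes $W_1>0$. After the reduction to a curve I would feed this case into Lemma \ref{cal2}: triviality of $(\mathcal{X}_b,\mathcal{H}_b)$ gives $E^{\mathrm{NA}}(\mathcal{H}_b)=0$, so that $W_1(\mathcal{X},\mathcal{H})=V(H)\bigl(M^{\mathrm{NA}}(\mathcal{X},\mathcal{H})+(S(X_b,H_b)-S(X,H))\,E^{\mathrm{NA}}(\mathcal{H})\bigr)$, and the problem reduces to showing that this total-space quantity is strictly positive whenever $\|(\mathcal{X},\mathcal{H})\|_m\neq0$. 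Here I would try to exploit the rigidity hypothesis $\mathrm{Aut}(X_b,H_b)=0$: since no fibre admits a non-trivial fibrewise automorphism-type degeneration, a degeneration that is fibrewise trivial but globally non-trivial should be forced either to be almost trivial as a test configuration (and thus excluded by $\|\cdot\|_m\neq0$) or to be a pullback of data on $B\times\mathbb{A}^1$, in which case $M^{\mathrm{NA}}(\mathcal{X},\mathcal{H})$ and $E^{\mathrm{NA}}(\mathcal{H})$ can be computed on the base and assembled into a positive combination.

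The hard part will be exactly this last step. Once $W_0=0$ has removed the fibre contribution, the residual expression in Lemma \ref{cal2} is a total-space K-energy that is no longer controlled by fibre K-stability alone; its sign genuinely records how the fibres vary over $B$ (the Weil--Petersson contribution) together with a base term, and neither is obviously nonnegative from the stated hypotheses. The role of $\mathrm{Aut}(X_b,H_b)=0$ is precisely to rigidify the fibrewise-trivial fibration degenerations so that this residual term cannot be made negative; turning this heuristic into a theorem — classifying the fibrewise-trivial fibration degenerations of non-vanishing minimum norm and checking $W_1>0$ on each — is where I expect the main difficulty to concentrate.
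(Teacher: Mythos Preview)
The statement is labelled a \emph{conjecture}, and the paper does not prove it; on the contrary, immediately after stating it the paper presents Example~\ref{lcbaseex} and concludes that Conjecture~\ref{conjjnoc} ``does not hold in the original sense or for $\mathfrak{f}$-stability.'' The example is $f:C\times B\to B$ with $C$ a curve of genus $>2$ and trivial automorphism group, and $B$ an lc surface with an isolated lc singularity $b_0$. Here $f$ is a smooth morphism and every fibre is K-stable with $\mathrm{Aut}(X_b,H_b)=0$, yet the deformation to the normal cone of $C\times\{b_0\}$ is a non-almost-trivial fibration degeneration with $W_0=W_1=W_2=0$.

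Your plan rests on a misreading: you take ``fibration stability in the original sense \cite{DS}'' to be the notion described in Remark~\ref{fibstfstdsst}, but that remark is describing the weakened \cite{DS4} version, which tests only against degenerations with $\|\cdot\|_m\neq 0$. That version is not the conjecture---the paper notes just before Conjecture~\ref{conjjnoc} that Dervan--Sektnan already \emph{proved} it (\cite[Corollary~2.35]{DS4}). The original \cite{DS} notion tests against all non-almost-trivial degenerations; the counterexample above has $\|\cdot\|_m=0$ (its generic fibre is trivial) but is not almost trivial, and on it $W_1=0$. This lands precisely on the step you flag as ``the hard part'': your heuristic that $\mathrm{Aut}(X_b,H_b)=0$ forces a fibrewise-trivial degeneration to be either almost trivial or a pullback from $B\times\mathbb{A}^1$ is false once $\dim B\ge 2$, since one can concentrate the degeneration over a higher-codimension locus of $B$. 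That is exactly what Example~\ref{lcbaseex} does.
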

 In fact, they proved such fibrations are fibration stable in the sense of \cite{DS4}. However, there exists a following singular example for $\mathfrak{f}$-stability.
\begin{ex}\label{lcbaseex}
Suppose that $B$ is a lc surface that has at least one isolated lc singularity (e.g, a projective cone of an elliptic curve) and $C$ is a curve whose genus $g(C)> 2$ with no automorphism other than the identity (cf., \cite{Ba}). Let $X=C\times B$ be an algebraic fiber space over $B$. Note that $X$ has the relative canonical ample divisor over $B$. Let $L$ be an ample line bundle on $B$ and $H=K_{X/B}+rL$ for sufficiently large $r>0$. Since $B$ has finite singularities, $X$ has lc centers of codimension $2$. Let $b_0$ be one of lc singularities on $B$. Take the deformation to the normal cone $(\mathcal{X},\mathcal{H})$ of $C\times {b_0}$. We can easily see that $W_0(\mathcal{X},\mathcal{H})=W_1(\mathcal{X},\mathcal{H})=0$ by Proposition \ref{cal}. We can compute $W_2(\mathcal{X},\mathcal{H})$ as follows. Note that $H^{\mathrm{NA}}(\mathcal{X},\mathcal{H})=0$ and $R^{\mathrm{NA}}(\mathcal{X},\mathcal{H})+S(X_b,K_{X_b})E^{\mathrm{NA}}(\mathcal{X},\mathcal{H})= I^{\mathrm{NA}}(\mathcal{X},\mathcal{H})-3J^{\mathrm{NA}}(\mathcal{X},\mathcal{H})$ by Proposition \ref{me}. Let $E=H_{\mathbb{P}^1}-\mathcal{H}$ be the exceptional divisor whose center has codimension $3$ in $X\times \mathbb{P}^1$. We can see $H_{\mathbb{P}^1}^4=H_{\mathbb{P}^1}^3\cdot E=H_{\mathbb{P}^1}^2\cdot E^2=0$. Then, we have
\begin{align*}
V(H)I^{\mathrm{NA}}(\mathcal{X},\mathcal{H})&= E\cdot \mathcal{H}^3=3E^3\cdot H_{\mathbb{P}^1}-E^4, \\
-V(H)J^{\mathrm{NA}}(\mathcal{X},\mathcal{H})&=\frac{1}{4}\mathcal{H}^4 =-E^3\cdot H_{\mathbb{P}^1}+\frac{1}{4}E^4.
\end{align*}
Therefore,
\[
V(H)(I^{\mathrm{NA}}(\mathcal{X},\mathcal{H})-3J^{\mathrm{NA}}(\mathcal{X},\mathcal{H}))=-\frac{1}{4}E^4.
\]
Since $E$ is the pullback of the exceptional divisor on the blow-up of $B\times \mathbb{P}^1$, we have $E^4=0$. Hence, $(X,H)\to (B,L)$ is not $\mathfrak{f}$-stable. It also follows that Conjecture \ref{conjjnoc} does not hold in the original sense or for $\mathfrak{f}$-stability.
\end{ex}
 
 \section{Singularities of semistable algebraic fiber spaces}\label{Dsing}
 
 In this section, we see that $\mathfrak{f}$-semistability implies that $X$ has only lc singularities similarly to \cite{GIToda}. Moreover, we obtain Theorem \ref{genFanofib}, which states that K-semistable Fano fibrations have no horizontal non-klt singularities. In the latter part of this section, we consider the case when $X$ is non-normal.
  
 \subsection{Normal case}
We prove the following generalizations of \cite[Theorems 1.2, 1.3]{GIToda} in terms of $\mathfrak{f}$-stability respectively:
 
\begin{thm}\label{singular}
Let $f:(X,\Delta,H)\to (B,L)$ be a polarized algebraic fiber space pair. If $f $ is $\mathfrak{f}$-semistable, $(X,\Delta)$ has at most lc singularities.
\end{thm}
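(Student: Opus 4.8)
The plan is to argue by contradiction, transplanting Odaka's strategy for Theorem \ref{ox} into the fibred setting: if $(X,\Delta)$ fails to be lc, I will build a fibration degeneration whose first non-vanishing invariant $W^{\Delta}_i$ is strictly negative, contradicting $\mathfrak{f}$-semistability. The bridge between the local negativity of log discrepancies and the numbers $W^{\Delta}_i$ is the slicing formula of Lemma \ref{induction}, which expresses $W^{\Delta}_i$ through the log relative canonical class of a degeneration restricted to $\mathcal{X}\cap D_1\cap\cdots\cap D_{n-i}$, combined with the entropy formula of Definition \ref{nadef}. Since $X$ is normal, ``not lc'' and ``not slc'' coincide here, so the corollary following Theorem \ref{oxfh} applies directly.

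First I would set $N=\mathrm{Nlc}(X,\Delta)$, the closed non-lc locus, and $d=\dim\overline{f(N)}$. After replacing $L$ by a very ample multiple, I cut the base by $d$ general members $D_1,\dots,D_d\in|L|$ and put $X''=X\cap D_1\cap\cdots\cap D_d$, $\Delta''=\Delta|_{X''}$. By the Bertini arguments already used in Proposition \ref{cal} and in \cite[Prop.~9.16]{BHJ}, together with adjunction (\cite[\S16,\S17]{K+}), the pair $(X'',\Delta'')$ is again not lc and its non-lc locus now lies over finitely many points of $B\cap D_1\cap\cdots\cap D_d$. By the corollary following Theorem \ref{oxfh}, there is a closed subscheme $Z\subset X$, supported over such a non-lc point, with $A_{(X,\Delta)}(v)<0$ for every $v\in\mathrm{Rees}(Z)$.

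Next I take $(\mathcal{X},\mathcal{H})$ to be the normalized deformation to the normal cone of $Z$, with $\mathcal{H}=\rho^*H-cE$ and $E$ the exceptional divisor. This is a normal semi fibration degeneration for $f$ (it dominates $X_{\mathbb{A}^1}$, is normalized with respect to the central fiber, and $\mathcal{H}+jL$ is ample for $j\gg0$), and by Theorem \ref{bhj48} the components of $\mathcal{X}_0$ are exactly the Rees valuations $v_E$. Because $f(Z)$ has dimension $d$, any slice by more than $d$ general members of $|L|$ misses $f(Z)$, so $(\mathcal{X}\cap D_1\cap\cdots\cap D_{n-i},\mathcal{H}|)$ is a trivial test configuration for every $i<n-d$; hence $W^{\Delta}_0=\cdots=W^{\Delta}_{n-d-1}=0$ and every $J^{\mathrm{NA}}$-correction term in Lemma \ref{induction} vanishes at $i=n-d$. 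Writing $\mathcal{X}''=\mathcal{X}\cap D_1\cap\cdots\cap D_d$ (the deformation to the normal cone of $Z''=Z\cap D_1\cap\cdots\cap D_d$ in $X''$), $\mathcal{H}''=\mathcal{H}|_{\mathcal{X}''}$ and $\mathcal{D}''$ for the corresponding boundary, Lemma \ref{induction} reduces to
\[
W^{\Delta}_{n-d}(\mathcal{X},\mathcal{H})=\binom{n+m}{d}\left(K^{\mathrm{log}}_{(\mathcal{X}'',\mathcal{D}'')/\mathbb{P}^1}\cdot(\mathcal{H}'')^{m+n-d}+\frac{S(X_b,\Delta_b,H_b)}{m+n-d+1}(\mathcal{H}'')^{m+n-d+1}\right).
\]

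It remains to show this is negative. Using the entropy formula of Definition \ref{nadef} and Lemma \ref{dfnf}(2), the bracket equals $V''\bigl(H^{\mathrm{NA}}_{\Delta''}+R^{\mathrm{NA}}_{\Delta''}\bigr)+\tfrac{S(X_b,\Delta_b,H_b)}{m+n-d+1}(\mathcal{H}'')^{m+n-d+1}$, whose dominant contribution is the entropy $V''H^{\mathrm{NA}}_{\Delta''}=\sum_E A_{(X'',\Delta'')}(v_E)\bigl(E\cdot(\mathcal{H}'')^{m+n-d}\bigr)$; the log discrepancies stay negative under general slicing (adjunction), so this is strictly negative. As in Odaka's computation (\cite[\S9]{BHJ}, \cite[Prop.~9.12]{BHJ}), the entropy is linear in $c$ with negative leading coefficient, whereas the Ricci- and $J$-energy contributions are $O(c^2)$, so for small $c>0$ one gets $W^{\Delta}_{n-d}(\mathcal{X},\mathcal{H})<0$, contradicting $\mathfrak{f}$-semistability. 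The main obstacle is exactly this last estimate in the fibred setting: unlike in Odaka's theorem the slice $X''$ fibres over a positive-dimensional base, so the formula for $W^{\Delta}_{n-d}$ carries the \emph{fibre} slope $S(X_b,\Delta_b,H_b)$ rather than the intrinsic slope of $X''$, and one must check that the resulting discrepancy is an $O(c^2)$ term dominated by the negative entropy. Verifying the triviality of the lower slices and the persistence of negative log discrepancies under general hyperplane sections (so that $W^{\Delta}_i$ vanishes for $i<n-d$ and detects the singularity exactly at $i=n-d$) is the other point needing care.
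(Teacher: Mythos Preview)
Your overall strategy matches the paper's: take $Z$ from the lc modification of $(X,\Delta)$, form the normalized deformation to the normal cone with $\mathcal{H}_\epsilon=\rho^*H-\epsilon E$, slice by general $D_i\in|L|$, and use Lemma \ref{induction} together with the entropy estimates of \cite[\S9]{BHJ} to show the first nonzero $W^\Delta_k$ is negative. The paper determines $k$ abstractly as the largest index for which the $(n-k+1)$-fold slice is trivial, which agrees with your $k=n-d$; and your remark that ``persistence of negative log discrepancies under general hyperplane sections is the other point needing care'' is precisely the Claim the paper proves in detail (showing $\hat{X}\cap D_1\cap\cdots\cap D_{n-k}$ is the normalized blow-up along $Z\cap D_1\cap\cdots\cap D_{n-k}$ and that adjunction \cite[\S16]{K+} transports the negative log discrepancies).

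There are two genuine gaps. First, you implicitly assume $\lceil\Delta\rceil$ is reduced: the corollary after Theorem \ref{oxfh} that produces $Z$ with all Rees log discrepancies negative needs this hypothesis, and your argument says nothing when some coefficient of $\Delta$ exceeds $1$. The paper handles that case separately, taking the deformation to the normal cone of a component $F_j$ with coefficient $>1$ and noting that $\mathrm{ord}_{F_j}$ already has negative log discrepancy while any other Rees valuation of $F_j$ has center of codimension $\ge 2$. Second, your asymptotics are stated incorrectly: the entropy on the slice is not ``linear in $c$'' but of order $c^{r}$, where $r=\min_i\mathrm{codim}_{X''}\rho(F_i)$, and the Ricci and energy terms are $O(c^{r+1})$ (this is what \cite[Prop.~9.12]{BHJ} actually yields, and what the paper uses). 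Once you have this, your worry about the fibre slope $S(X_b,\Delta_b,H_b)$ versus the intrinsic slope of $X''$ evaporates, since the discrepancy multiplies $E^{\mathrm{NA}}=O(c^{r+1})$ and is swallowed by the lower-order terms. There is also a small inconsistency in your setup: you describe $Z\subset X$ as ``supported over such a non-lc point'' and then assert $\dim f(Z)=d$; these cannot both hold. The correct choice (and the paper's) is to take $Z$ from the lc modification of $(X,\Delta)$ on $X$ itself, so that $\mathrm{Supp}\,Z=\mathrm{Nlc}(X,\Delta)$ and $\dim f(Z)=d$; the slicing to $X''$ comes afterwards.
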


\begin{thm}\label{Fanofib}
Let $f:(X,H)\to (B,L)$ be a flat polarized algebraic fiber space. Suppose that there exist $\lambda\in \mathbb{Q}_{>0}$ and a line bundle $L_0$ on $B$ such that $H+f^*L_0\equiv-\lambda\, K_{X}$ and $B$ has only klt singularities. If $f$ is $\mathfrak{f}$-semistable, then $X$ has only klt singularities. In particular, if $f$ is adiabatically K-semistable (Definition \ref{algfibk}), then $X$ has only klt singularities.
\end{thm}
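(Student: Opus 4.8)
The plan is to first reduce to the log canonical case and then upgrade to klt by a destabilisation argument modelled on Odaka's Fano theorem (Theorem \ref{ox}(ii)), but carried out with the invariants $W_k$ in place of the Donaldson--Futaki invariant. By Lemma \ref{trlem} I may assume $H$ is ample. Since $f$ is $\mathfrak{f}$-semistable, Theorem \ref{singular} (with $\Delta=0$) already gives that $X$ is lc, so it remains to exclude log canonical centres. The final ``in particular'' clause is then immediate: by Remark \ref{trrem}, adiabatic K-semistability of $(X,H)$ with respect to $L$ implies $\mathfrak{f}$-semistability, so the second assertion follows from the first. Finally, restricting $H+f^*L_0\equiv-\lambda K_X$ to a fibre gives $H_b\equiv-\lambda K_{X_b}$, so every fibre is a polarised Fano variety; this is what will let me invoke Odaka's result fibrewise.

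Suppose for contradiction that $X$ is not klt. As $X$ is lc, its non-klt locus is a nonempty union of lc centres; I choose a minimal lc centre $W$ and set $Z=\overline{f(W)}$ with $d=\dim Z$. The argument splits according to whether $W$ is \emph{horizontal} ($Z=B$) or \emph{vertical} ($Z\subsetneq B$), and in either case I produce a fibration degeneration on which the relevant $W_k$ is negative, contradicting $\mathfrak{f}$-semistability. The natural bookkeeping is that a centre with $\dim Z=d$ is detected by $W_k$ with $k=n-d$, so that $W_0$ governs the horizontal case and the higher $W_k$ the vertical ones.

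In the horizontal case $d=n$ the centre $W$ dominates $B$, and by generic restriction of log canonical centres a general fibre $(X_b,0)$ inherits an lc centre, hence $X_b$ is a non-klt Fano with $H_b\equiv-\lambda K_{X_b}$. On the other hand $\mathfrak{f}$-semistability gives $W_0\ge0$, and by Lemma \ref{induction} with $i=0$ (the fibrewise Mabuchi functional) this says $M^{\mathrm{NA}}(\mathcal{X}_b,\mathcal{H}_b)\ge0$ for every degeneration of the general fibre; thus $(X_b,H_b)$ is K-semistable. Since K-semistability is invariant under rescaling the polarisation, $(X_b,-K_{X_b})$ is K-semistable, and Theorem \ref{ox}(ii) forces $X_b$ to be klt, a contradiction.

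For the vertical case $d<n$ I pass to $W_{k}$ with $k=n-d$. Cutting $B$ by $n-k$ general members of $|ML|$ (with $ML$ very ample) preserves flatness and the Fano relation, keeps the base klt by Bertini, and reduces $Z$ to a finite set; I then form the deformation to the normal cone of the cut-down $W$, a normalised semiample test configuration dominating $X_{\mathbb{A}^1}$, whose central-fibre valuations are exactly the Rees valuations of $W$ by Theorem \ref{bhj48}. Choosing the destabilising subscheme by means of the lc modification (Theorem \ref{oxfh}) so that all these Rees valuations realise the zero log discrepancy, the entropy term drops out, and Proposition \ref{me} gives $\left(\binom{n+m}{n-k}(H^{m+k}\cdot L^{n-k})\right)^{-1}W_k=H^{\mathrm{NA}}-\tfrac{1}{\lambda}\bigl(I^{\mathrm{NA}}-(k+1)J^{\mathrm{NA}}\bigr)$ on the cut-down configuration (chosen so that $J^{\mathrm{NA}}$ vanishes on all further cut-downs, as required by that proposition), where the klt hypothesis on $B$ guarantees, via Lemma \ref{melem}, that the base contributions vanish. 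It then suffices to show $I^{\mathrm{NA}}-(k+1)J^{\mathrm{NA}}>0$, which with $H^{\mathrm{NA}}=0$ yields $W_k<0$. I expect this last positivity to be the main obstacle: as Example \ref{lcbaseex} already shows, the combination attached to a deformation to the normal cone need not be positive, its sign being governed by how $\mathrm{codim}_X W$ compares with $\dim Z$, and a badly placed centre gives $W_k=0$ rather than a strict inequality. Making this work therefore requires selecting $W$ with the correct codimension relative to the base, which is exactly where the minimal model program input (Theorem \ref{oxfh}) and the klt hypothesis on $B$ — preventing positive entropy from leaking in along vertical directions — are indispensable.
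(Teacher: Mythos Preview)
Your overall architecture---reduce to lc via Theorem \ref{singular}, then destabilise lc centres by deformation to the normal cone and Proposition \ref{me}---matches the paper's. The horizontal case is essentially right, though the step ``$W_0\ge0$ for all fibration degenerations $\Rightarrow$ $(X_b,H_b)$ K-semistable'' needs a word: not every test configuration of $X_b$ is the restriction of a fibration degeneration, so one must instead choose the destabilising subscheme $Z\subset X$ first (via Proposition \ref{parres}) and then restrict, as in the Claim inside the proof of Theorem \ref{singular}.

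The real gap is in the vertical case, and it stems from using the wrong dichotomy. Horizontal versus vertical is not the relevant split; the paper's dichotomy is \emph{fiber-type} versus \emph{non-fiber-type} (Definition \ref{fibertype}). Your positivity $I^{\mathrm{NA}}-(k+1)J^{\mathrm{NA}}>0$ is equivalent, after cutting so the image of the centre is a point, to the leading exponent $r=\min\{i:E_i\ne\emptyset\}$ satisfying $r>n$; and $r>n$ is \emph{exactly} the statement that the centre is not of fiber type. For a fiber-type lc centre---e.g.\ an irreducible component of a singular fibre---you get $r=n$ and the leading term of $I^{\mathrm{NA}}-(n+1)J^{\mathrm{NA}}$ vanishes (this is precisely what Example \ref{lcbaseex} exhibits). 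No choice of $Z$ via Theorem \ref{oxfh} or Proposition \ref{parres} will repair this: those results control log discrepancies, not the codimension ratio.

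Consequently the paper proves only Theorem \ref{genFanofib} by the $W_k$-destabilisation: $\mathfrak f$-semistability forces every lc centre to be of fiber type, with no hypothesis on $B$ at all. The klt hypothesis on $B$ and the flatness of $f$ enter \emph{only afterwards}, through a completely separate argument (Lemma \ref{lemprop} and Proposition \ref{proplem}): since $f$ is flat, the smooth locus of $f$ meets every irreducible component of every fibre, and over that locus $X$ inherits klt singularities from $B$; hence no lc centre of $X$ can contain a fibre component, i.e.\ no lc centre is of fiber type. Combining the two halves gives klt. Your invocation of the klt hypothesis ``via Lemma \ref{melem}'' is misplaced---that lemma is a pure intersection-number vanishing with no singularity hypothesis---and your final paragraph is looking for an MMP trick to force the codimension inequality where none exists.
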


\begin{proof}[Proof of Theorem \ref{singular}]
Suppose that $m=\mathrm{rel.dim}\, f$ and $n=\mathrm{dim}\,B$ and assume that $(X,\Delta)$ is not lc.
First, assume that $\lceil\Delta\rceil$ is reduced. Then, by Theorem 1.1 of \cite{OX}, there exists the lc modification $\hat{X}$ of $(X,\Delta)$. Thus, we can see that there exists a closed subscheme $Z\subset X$ whose Rees valuations $v$ all satisfy $A_{(X,\Delta)}(v)< 0$ (cf. \cite[Proposition 9.9]{BHJ}) and $\pi:\hat{X}\to X$ is the blow up along $Z$. We can also easily check that the normalization of the deformation to the normal cone $\mathcal{X}$ of $Z$ is a fibration degeneration for $f$. Let $\rho :\mathcal{X}\to X\times \mathbb{A}^1$ be the canonical projection, $E$ be the inverse image of $Z\times\{0\}$ and $\mathcal{H}_\epsilon=\rho^*H_{\mathbb{A}^1}-\epsilon E$ for sufficiently small $\epsilon>0$. We may assume that $H$ is ample and there exists a positive constant $\epsilon_0>0$ such that $(\mathcal{X},\mathcal{H}=\mathcal{H}_{\epsilon_0})$ is a normal ample test configuration by replacing $H$ by $H+jL$ for $j\gg0$ (cf., Lemma \ref{trlem}). Hence, $(\mathcal{X},\mathcal{H}_\epsilon)$ is an ample test configuration for $0<\epsilon<\epsilon_0$. Note that $(\mathcal{X},\mathcal{H}_\epsilon)$ is normalized with respect to the central fiber. Now, we prove that for sufficiently small $0<\epsilon<\epsilon_0$, there exists $k$ such that $W^{\Delta}_k(\mathcal{X},\mathcal{H}_\epsilon)<0$ but $W^{\Delta}_i(\mathcal{X},\mathcal{H}_\epsilon)=0$ for $i<k$. Take $D_1,D_2,\cdots ,D_n\in|ML|$ general for sufficiently divisible integer $M>0$. By replacing $L$ by $ML$, we may assume that $M=1$ as in Lemma \ref{induction}. Then there exists an integer $k$ that is maximal among integers $l$ such that $(\mathcal{X}\cap D_1\cap D_2\cap \cdots\cap D_{n-l+1},\mathcal{H}|_{\mathcal{X}\cap D_1\cap D_2\cap \cdots\cap D_{n-l+1}})$ is trivial. Therefore, $W^{\Delta}_i(\mathcal{X},\mathcal{H}_\epsilon)=0$ for $i<k$ and $J^\mathrm{NA}(\mathcal{H}|_{\mathcal{X}\cap D_1\cap D_2\cap \cdots\cap D_{n-k+1}})=\cdots =J^\mathrm{NA}(\mathcal{H}_b)=0$. By Lemma \ref{induction},
\begin{align*}
W^{\Delta}_k&(\mathcal{X},\mathcal{H}_\epsilon) = \binom{n+m}{n-k} \left(K^{\mathrm{log}}_{(\mathcal{X},\rho^{-1}_* \Delta_{\mathbb{P}^1})/\mathbb{P}^1}\cdot L^{n-k} \cdot \mathcal{H}_\epsilon^{m+k}+\frac{S(X_b,\Delta_b,H_b)}{m+k+1}\mathcal{H}_\epsilon^{m+k+1}\cdot L^{n-k}\right)\\
=&\binom{n+m}{n-k}(L^{n-k}\cdot H^k)\bigl(H_{\Delta\cap D_1\cap D_2\cap \cdots\cap D_{n-k}}^\mathrm{NA}(\mathcal{X}\cap D_1\cap D_2\cap \cdots\cap D_{n-k},\mathcal{H}_\epsilon|_{\mathcal{X}\cap D_1\cap D_2\cap \cdots\cap D_{n-k}})\\
&+R_{\Delta\cap D_1\cap D_2\cap \cdots\cap D_{n-k}}^\mathrm{NA}(\mathcal{X}\cap D_1\cap D_2\cap \cdots\cap D_{n-k},\mathcal{H}_\epsilon|_{\mathcal{X}\cap D_1\cap D_2\cap \cdots\cap D_{n-k}})\\
&+S(X_b,\Delta_b,H_b)E^\mathrm{NA}(\mathcal{X}\cap D_1\cap D_2\cap \cdots\cap D_{n-k},\mathcal{H}_\epsilon|_{\mathcal{X}\cap D_1\cap D_2\cap \cdots\cap D_{n-k}})\bigr).
\end{align*}
If $E\cap D_1\cap D_2\cap \cdots\cap D_{n-k}= \emptyset$, then $(\mathcal{X}\cap D_1\cap D_2\cap \cdots\cap D_{n-k},\mathcal{H}_\epsilon|_{\mathcal{X}\cap D_1\cap D_2\cap \cdots\cap D_{n-k}})$ is almost trivial and this contradicts to the maximality of $k$. Thus we may assume that $E\cap D_1\cap D_2\cap \cdots\cap D_{n-k}\ne \emptyset$.
As in the proof of Proposition 9.12 of \cite{BHJ}, let $\{F_i\}_{i=1}^q$ be the set of prime exceptional divisors of $\hat{X}\cap D_1\cap D_2\cap \cdots\cap D_{n-k}\to X\cap D_1\cap D_2\cap \cdots\cap D_{n-k}$ and suppose that $\min_{1\le i\le q}\mathrm{codim}_{X\cap D_1\cap D_2\cap \cdots\cap D_{n-k}}\rho(F_i)=r$. Then,
\begin{align*}
R_{\Delta\cap D_1\cap D_2\cap \cdots\cap D_{n-k}}^\mathrm{NA}(\mathcal{X}\cap D_1\cap D_2\cap \cdots\cap D_{n-k},\mathcal{H}_\epsilon|_{\mathcal{X}\cap D_1\cap D_2\cap \cdots\cap D_{n-k}})&=O(\epsilon^{r+1}) \\
E^\mathrm{NA}(\mathcal{X}\cap D_1\cap D_2\cap \cdots\cap D_{n-k},\mathcal{H}_\epsilon|_{\mathcal{X}\cap D_1\cap D_2\cap \cdots\cap D_{n-k}})&=O(\epsilon^{r+1})
\end{align*}
since the positive  non-Archimedean metric $(\mathcal{X}\cap D_1\cap D_2\cap \cdots\cap D_{n-k},\mathcal{H}_\epsilon|_{\mathcal{X}\cap D_1\cap D_2\cap \cdots\cap D_{n-k}})$ is also normalized with respect to the central fiber. On the other hand, there exists a positive constant $T>0$ such that
\[
H_{\Delta\cap D_1\cap D_2\cap \cdots\cap D_{n-k}}^\mathrm{NA}(\mathcal{X}\cap D_1\cap D_2\cap \cdots\cap D_{n-k},\mathcal{H}_\epsilon|_{\mathcal{X}\cap D_1\cap D_2\cap \cdots\cap D_{n-k}})=-T\epsilon^r+O(\epsilon^{r+1}) 
\]
if $A_{(X\cap D_1\cap D_2\cap \cdots\cap D_{n-k},\Delta\cap D_1\cap D_2\cap \cdots\cap D_{n-k})}(F_i)<0$ for all $F_i$ such that $\mathrm{codim}_{X\cap D_1\cap D_2\cap \cdots\cap D_{n-k}}\rho(F_i)=r$ by Theorem \ref{bhj48}. Furthermore, we can take so general $D_i$'s that $\hat{X}\cap D_1\cap D_2\cap \cdots\cap D_{n-k}$ are normal. Then, the theorem when $\lceil \Delta\rceil$ is reduced holds by the following claim:
\begin{claim}
If a divisorial valuation $v$ of $X\cap D_1\cap D_2\cap \cdots\cap D_{n-k}$ is one of the Rees valuations of $Z\cap D_1\cap D_2\cap \cdots\cap D_{n-k}$, then $A_{(X\cap D_1\cap D_2\cap \cdots\cap D_{n-k},\Delta\cap D_1\cap D_2\cap \cdots\cap D_{n-k})}(v)<0$.
\end{claim}
Now, we prove the claim. First, note that $\hat{X}\cap D_1\cap D_2\cap \cdots\cap D_{n-k}$ is the (normalized) blow up along $Z\cap D_1\cap D_2\cap \cdots\cap D_{n-k}$. In fact, if $0<k< n$, we can choose $D_1, D_2, \cdots, D_{n-k}$ so general that $\hat{X}\cap D_1\cap D_2\cap \cdots\cap D_{n-k}$ is normal and irreducible. Then we can see that $\hat{X}\cap D_1\cap D_2\cap \cdots\cap D_{n-k}$ is isomorphic to the blow up of $X\cap D_1\cap D_2\cap \cdots\cap D_{n-k}$ along $Z\cap D_1\cap D_2\cap \cdots\cap D_{n-k}$ by \cite[Corollary 7.15]{Ha}. On the other hand, suppose that $k=0$. Then $X\cap D_1\cap D_2\cap \cdots\cap D_{n-1}$ has the normal and connected generic geometric fiber over $D_1\cap D_2\cap \cdots\cap D_{n-1}$. It is easy to see that $g:X\cap D_1\cap D_2\cap \cdots\cap D_{n-1}\to D_1\cap D_2\cap \cdots\cap D_{n-1}$ satisfies that $g_*\mathcal{O}_{X\cap D_1\cap D_2\cap \cdots\cap D_{n-1}}\cong \mathcal{O}_{ D_1\cap D_2\cap \cdots\cap D_{n-1}}$. Since $\hat{X}\cap D_1\cap D_2\cap \cdots\cap D_{n-1}$ is birational to $X\cap D_1\cap D_2\cap \cdots\cap D_{n-1}$, $\hat{X}\cap D_1\cap D_2\cap \cdots\cap D_{n-1}\to D_1\cap D_2\cap \cdots\cap D_{n-1}$ has also connected fibers by the Zariski main theorem \cite[III Corollary 11.4]{Ha}. Therefore, $\hat{X}_b$ is normal and connected for general $b\in B$ and hence $\hat{X}_b\to X_b$ is the blowing up along $X_b\cap Z$. Thus we can conclude that the set of Rees valuations of $Z\cap D_1\cap D_2\cap \cdots\cap D_{n-k}$ coincides with the set of irreducible components of $E_i\cap D_1\cap D_2\cap \cdots\cap D_{n-k}$ for any irreducible component $E_i$ of $\pi$-exceptional divisors.
 
Next, for instance, we show the claim when $k=1$. Since $X\cap D_1$ is a normal Cartier divisor and $$K_{\hat{X}}+\hat{X}\cap D_1=\pi^*(K_X+\Delta+X\cap D_1)+\sum_i (A_{(X,\Delta)}(E_i)-1)E_i$$ where $A_{(X,\Delta)}(E_i)<0$, we have by the adjunction formula (cf., \cite[16.3, 16.4, 17.2]{K+}), $$K_{\hat{X}\cap D_1}=\pi_{|X\cap D_1}^*(K_{X\cap D_1}+\Delta\cap D_1)+\sum_i (A_{(X,\Delta)}(E_i)-1)(E_i\cap D_1).$$ Indeed, note that $E_i\cap D_1$'s are reduced divisors (maybe reducible) and any $\pi_{|D_1}$-exceptional divisor coincides with one of irreducible components of $E_i\cap D_1$ by Bertini's theorem. We also remark that each $E_i$ is not a $\mathbb{Q}$-Cartier but Weil divisor. In general, the above adjunction formula does not hold for any $D_1$. However, we can choose sufficiently general $D_1$ that each $E_i$ is Cartier at codimension 1 points of $\hat{X}\cap D_1$. Then, $E_i\cap D_1=j^wE_i$ on $\hat{X}\cap D_1$ where $j:\hat{X}\cap D_1\to\hat{X}$. Here, $j^wE_i$ is the extension of $(j|_{D_1\cap U})^*(E_i)|_{U}$ to $\hat{X}\cap D_1$ where $U$ is an open subset of $\hat{X}$ such that $\mathrm{codim}_{\hat{X}\cap D_1}(\hat{X}\cap D_1\setminus U)\ge 2$ and $(E_i)|_{U}$ is Cartier (see \cite[16.3]{K+}). Therefore, the above adjunction formula holds and hence we have 
\[
A_{(X\cap D_1, \Delta\cap D_1)}(E_i\cap D_1)=A_{(X,\Delta)}(E_i)<0.
\]
 We also check that $E_i\cap D_1$ is exceptional when we take so general $D_1$. Therefore, the claim holds when $k=1$. The assertion when $k>1$ follows similarly to the case when $k=1$ inductively. Thus, we can conclude that the claim holds.
 
Finally, assume that $\Delta=\sum a_iF_i$ for some $a_i>1$ where $F_i$ are distinct irreducible components. Fix $F_j$ such that $a_j>1$. Let $(\mathcal{X},\mathcal{H}_\epsilon)$ be the deformation to the normal cone of $F_j$ and $\mathcal{H}_\epsilon=H_{\mathbb{P}^1}-\epsilon E$ where $E$ is the exceptional divisor for sufficiently small $\epsilon>0$. Let $k$ be maximal among integers $l$ such that $(\mathcal{X}\cap D_1\cap D_2\cap \cdots\cap D_{n-l+1},\mathcal{H}_{\epsilon|_{\mathcal{X}\cap D_1\cap D_2\cap \cdots\cap D_{n-l+1}}})$ is trivial. Then by the argument in the previous paragraph, there exists a positive constant $T>0$ such that 
\[
H_{\Delta\cap D_1\cap D_2\cap \cdots\cap D_{n-k}}^\mathrm{NA}(\mathcal{X}\cap D_1\cap D_2\cap \cdots\cap D_{n-k},\mathcal{H}_\epsilon|_{\mathcal{X}\cap D_1\cap D_2\cap \cdots\cap D_{n-k}})=-T\epsilon+O(\epsilon^2)
\]
  since the Rees valuations $v\ne\mathrm{ord}_{F_j}$ of $Z$ have the centers $C$ on $X$ such that $\mathrm{codim}_XC\ge 2$ even if $F_j$ is not a $\mathbb{Q}$-Cartier divisor. On the other hand, 
\begin{align*}
R_{\Delta\cap D_1\cap D_2\cap \cdots\cap D_{n-k}}^\mathrm{NA}(\mathcal{X}\cap D_1\cap D_2\cap \cdots\cap D_{n-k},\mathcal{H}_\epsilon|_{\mathcal{X}\cap D_1\cap D_2\cap \cdots\cap D_{n-k}})&=O(\epsilon^{2}) \\
E^\mathrm{NA}(\mathcal{X}\cap D_1\cap D_2\cap \cdots\cap D_{n-k},\mathcal{H}_\epsilon|_{\mathcal{X}\cap D_1\cap D_2\cap \cdots\cap D_{n-k}})&=O(\epsilon^{2}).
\end{align*}
 Thus, $W_k^\Delta(\mathcal{X},\mathcal{H}_\epsilon)<0$ for sufficiently small $\epsilon>0$. We complete the proof.
\end{proof}

To show Theorem \ref{Fanofib}, we need the following,
 
\begin{de}\label{fibertype}
Let $f:X\to B$ be an algebraic fiber space. An irreducible closed subset $Z$ of $X$ is of {\it fiber type} if $\mathrm{codim}_XZ\le\mathrm{codim}_Bf(Z)$.
\end{de}
 
Moreover, we prove the following generalization of Theorem \ref{Fanofib}:
 
\begin{thm}\label{genFanofib}
Let $f:(X,\Delta,H)\to (B,L)$ be a polarized algebraic fiber space pair. Suppose that there exist $\lambda\in \mathbb{Q}_{>0}$ and a line bundle $L_0$ on $B$ such that $H+f^*L_0\equiv-\lambda\, (K_{X}+\Delta)$, and $f$ is $\mathfrak{f}$-semistable. Then $(X,\Delta)$ is lc and any lc-center of $(X,\Delta)$ is of fiber type.
\end{thm}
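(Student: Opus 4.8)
The plan is to get the log canonicity of $(X,\Delta)$ for free from Theorem \ref{singular} (whose hypothesis, $\mathfrak{f}$-semistability, is assumed), and then to prove the statement on lc-centers by contradiction, adapting the destabilization in the proof of Theorem \ref{singular} but feeding the Fano-type identity through Proposition \ref{me}. So suppose $(X,\Delta)$ is lc but carries an lc-center $C$ that is \emph{not} of fiber type, i.e.\ $c\coloneq\mathrm{codim}_X C>\mathrm{codim}_B f(C)\eqcolon s$. After replacing $H$ by $H+j_0L$ for $j_0\gg0$ (Lemma \ref{trlem}), I may assume $H$ is ample and that the hypothesis reads $-\lambda^{-1}H\equiv K_X+\Delta+f^*L_0'$ for some line bundle $L_0'$ on $B$, so that Proposition \ref{me} applies with the negative constant $-\lambda^{-1}$ in place of its $\lambda$.

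The core of the argument is to build a fibration degeneration whose central fibre records a single \emph{log canonical place} of $C$, so that the non-Archimedean entropy vanishes identically. Since $C=c_X(v)$ for a divisorial valuation $v$ with $A_{(X,\Delta)}(v)=0$, the minimal model program (cf.\ \cite{HX}, \cite{OX}) provides a projective birational $\mu:Y\to X$ with a unique $\mu$-exceptional divisor $E$, $v=v_E$, and $-E$ $\mu$-ample; thus $Y$ is the normalized blow up of the subscheme $Z\subset X$ defined by $\mathfrak a_j=\mu_*\mathcal O_Y(-jE)$ for $j\gg0$, and $\mathrm{Rees}(Z)=\{v\}$ by construction. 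Let $(\mathcal X,\mathcal H_\epsilon=\rho^*H_{\mathbb A^1}-\epsilon E)$ be the normalized deformation to the normal cone of $Z$; it is a normalized fibration degeneration for $f$, and a normal ample test configuration for $0<\epsilon<\epsilon_0$. The components of $\mathcal X_0$ are the strict transform of $X\times\{0\}$ (trivial valuation) and the exceptional divisor (valuation $v$); both carry log discrepancy $0$, so by Theorem \ref{bhj48} the entropy $H^{\mathrm{NA}}_\Delta$ vanishes \emph{for every} $\epsilon$, and this persists after restriction to general hyperplane sections by the adjunction computation of the Claim in the proof of Theorem \ref{singular}.

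Now I would run the same cutting bookkeeping: for general $D_1,\dots,D_n\in|ML|$, since $f(Z)=f(C)$ has dimension $n-s$, the restriction $(\mathcal X\cap D_1\cap\cdots\cap D_{n-l+1},\mathcal H_\epsilon|_{\cdots})$ is trivial exactly for $l\le s$, so the maximal level of triviality is $k=s$ and $W^{\Delta}_i(\mathcal X,\mathcal H_\epsilon)=0$ for $i<k$. The cut center $C\cap D_1\cap\cdots\cap D_{n-k}$ then has codimension $c$ in $X\cap D_1\cap\cdots\cap D_{n-k}$, with $c>k=s$. Writing $(\mathcal X_k,\mathcal H_\epsilon)$ for the $(n-k)$-fold section and using that the entropy vanishes identically, Proposition \ref{me} yields
\[
\left(\binom{n+m}{n-k}(H^{m+k}\cdot L^{n-k})\right)^{-1}W^{\Delta}_k(\mathcal X,\mathcal H_\epsilon)=-\lambda^{-1}\bigl(I^{\mathrm{NA}}(\mathcal X_k,\mathcal H_\epsilon)-(k+1)J^{\mathrm{NA}}(\mathcal X_k,\mathcal H_\epsilon)\bigr).
\]
A direct intersection computation for the deformation to the normal cone of a codimension-$c$ center gives $I^{\mathrm{NA}}=(c+1)J^{\mathrm{NA}}+O(\epsilon^{c+2})$ with $J^{\mathrm{NA}}>0$ of order $\epsilon^{c+1}$, so that $I^{\mathrm{NA}}-(k+1)J^{\mathrm{NA}}=(c-k)J^{\mathrm{NA}}+O(\epsilon^{c+2})>0$ for $0<\epsilon\ll1$, using $c>k$. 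As $-\lambda^{-1}<0$ and the volume factor is positive, this forces $W^{\Delta}_k(\mathcal X,\mathcal H_\epsilon)<0$ while $W^{\Delta}_i=0$ for $i<k$, contradicting $\mathfrak f$-semistability.

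The hard part is the second paragraph: I must arrange that the degeneration records \emph{only} log canonical places, so that $H^{\mathrm{NA}}_\Delta\equiv0$. If a higher-codimension Rees valuation with nonzero log discrepancy intervened, its entropy contribution would be of the same order $\epsilon^{c+1}$ as the term coming from $I^{\mathrm{NA}}-(k+1)J^{\mathrm{NA}}$ and could reverse the sign; extracting exactly one divisor with $-E$ relatively ample (hence a single Rees valuation that is an lc place) is precisely where MMP is indispensable. The leading-order identity $I^{\mathrm{NA}}=(c+1)J^{\mathrm{NA}}$ is the other input, and it is reassuringly consistent with Example \ref{lcbaseex}: there the borderline fiber-type center has $c=k$, making $I^{\mathrm{NA}}-(k+1)J^{\mathrm{NA}}$ vanish to leading order, in agreement with the computed $E^4=0$.
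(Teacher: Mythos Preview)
Your proof is correct and follows the same strategy as the paper's: reduce to log canonicity via Theorem \ref{singular}, extract via MMP a subscheme $Z$ whose unique Rees valuation is an lc place over the chosen non-fiber-type center (this is exactly Proposition \ref{parres}(1) in the paper), use the resulting vanishing of $H_\Delta^{\mathrm{NA}}$ together with Proposition \ref{me} to reduce $W_k^\Delta$ to $-\lambda^{-1}(I^{\mathrm{NA}}-(k+1)J^{\mathrm{NA}})$, and then control the sign by the leading-order asymptotics in $\epsilon$ and the codimension inequality $c>k$. The only cosmetic difference is that the paper first cuts down to the case $k=n$ before computing, and then derives your identity $I^{\mathrm{NA}}=(r+1)J^{\mathrm{NA}}+O(\epsilon^{r+2})$ explicitly via the binomial sum $\sum_{i=r}^N\binom{i}{r}=\binom{N+1}{r+1}$.
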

 
\begin{claim}
Theorem \ref{genFanofib} implies Theorem \ref{Fanofib}.
\end{claim}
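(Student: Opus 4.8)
The strategy is to specialize Theorem~\ref{genFanofib} and then upgrade ``lc'' to ``klt'' using flatness together with the klt hypothesis on $B$. Put $\Delta=0$. Then $H+f^*L_0\equiv-\lambda K_X=-\lambda(K_X+\Delta)$ with $\lambda\in\mathbb{Q}_{>0}$, and $f$ is $\mathfrak{f}$-semistable, so Theorem~\ref{genFanofib} (which needs neither flatness nor a klt base) applies and shows that $X$ is lc and that every lc centre of $X$ is of fibre type (Definition~\ref{fibertype}). Since, for $\Delta=0$, a klt variety is exactly a normal lc variety with no lc centre, it suffices to show $X$ has no lc centre. Suppose $C\subsetneq X$ is one. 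Because $\Delta=0$ and $X$ is normal, a prime divisor of $X$ has log discrepancy $1$, so there is no lc centre of codimension $\le1$ and hence $\mathrm{codim}_X C\ge2$. Fibre type gives $\mathrm{codim}_X C\le\mathrm{codim}_B f(C)$, so $Z\coloneq\overline{f(C)}$ satisfies $\mathrm{codim}_B Z\ge2$; and since $f$ is flat, being of fibre type means that $C$ contains an irreducible component of a fibre. Thus $C$ contains, for general $b\in Z$, a whole component $W_b$ of $X_b$ of dimension $m=\mathrm{rel.dim}\,f$, and $X$ is non-klt along $W_b$.

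It remains to contradict the existence of such a vertical lc centre over the codimension-$\ge2$ klt locus $Z\subset B$, and here the relative anti-ampleness of $K_X$ (note $-K_X$ is $f$-ample, as $H+f^*L_0\equiv-\lambda K_X$ with $H$ $f$-ample and $\lambda>0$) must be used crucially. I first observe that the bare normal-cone method of Theorem~\ref{singular} is not by itself enough: taking the normalized deformation to the normal cone of $C$ gives a fibration degeneration $(\mathcal X,\mathcal H)$ for $f$, but for an lc (rather than non-lc) centre of fibre type the relevant cut degeneration $\mathcal X\cap D_1\cap\cdots\cap D_{n-k}$ ($D_i\in|ML|$ general) becomes trivial, so Proposition~\ref{me} yields only $W_k(\mathcal X,\mathcal H)=0$, consistent with $\mathfrak{f}$-semistability and with Theorem~\ref{genFanofib}. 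Hence the klt hypothesis on $B$ must enter through a genuine adjunction/descent input. The plan is therefore to argue that $X$ non-klt along the Fano fibre component $W_b$ propagates, via the $f$-ampleness of $-K_{X/B}$, to a non-klt point of $B$ over $b$, contradicting $B$ klt. Once no lc centre survives, $X$ is klt; the final assertion follows since adiabatic K-semistability implies $\mathfrak{f}$-semistability (Remark~\ref{trrem}).

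The main obstacle is making the sign of this descent rigorous. Log discrepancies over $B$ alone do not forbid $C$: at the generic point of $C$ the morphism $f$ is finite flat over the klt local ring $\mathcal{O}_{B,\eta_Z}$, and ramified finite covers of klt singularities can be non-klt, so the relative anti-ampleness of $K_X$ must be exploited globally along the positive-dimensional Fano fibres rather than through the local structure at $C$. A further difficulty is that the fibres over $Z$ may be reducible or non-reduced, so under flat base change the pulled-back discrepancies are twisted by fibre multiplicities, and that $K_{\mathcal X}$ and the reduced central fibre are only Cartier in codimension $1$; as in the proofs of Theorem~\ref{singular} and Proposition~\ref{me}, this forces one to apply the adjunction formula of \cite[\S16,\S17]{K+} only to sufficiently general members of $|ML|$ and to track the governing valuations via Theorem~\ref{bhj48}. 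Controlling the resulting entropy contribution uniformly as the normal-cone parameter tends to $0$, and thereby converting the relative Fano positivity over the klt base into a strictly negative invariant, is the crux of the argument.
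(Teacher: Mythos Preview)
Your overall structure is right and matches the paper: apply Theorem~\ref{genFanofib} with $\Delta=0$, translate ``fibre type'' via flatness into ``contains an irreducible component of some fibre'', and then rule out such lc centres using that $B$ is klt. The last assertion via Remark~\ref{trrem} is also correct.

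The gap is in that middle step, and you have misdiagnosed what drives it. You assert that the relative anti-ampleness of $K_X$ ``must be used crucially'' and then sketch, without completing, a descent argument meant to push non-klt-ness from a fibre component down to $B$, followed by a further normal-cone/entropy computation. This is unnecessary: the Fano condition is only used to invoke Theorem~\ref{genFanofib}; once one knows $X$ is lc with all lc centres of fibre type, the passage to klt uses nothing but flatness of $f$ and the klt hypothesis on $B$. The paper's argument (Lemma~\ref{lemprop} and Proposition~\ref{proplem}) is elementary. Since $f$ is flat and we are in characteristic zero, $f$ is smooth at the generic point of each irreducible component of each fibre, so the non-smooth locus of $f$ contains no fibre component. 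On the smooth locus, pulling back a log resolution of $B$ and using that $K_{X/B}$ is compatible with smooth base change shows directly that $B$ klt implies the total space is klt there. Hence any lc centre of $X$ lies in the non-smooth locus of $f$ and therefore cannot contain a fibre component, contradicting fibre type. No test configuration, no entropy estimate, and no use of $-K_X$ being $f$-ample is required at this stage. Incidentally, your remark that $f$ is finite flat at the generic point of $C$ is also off: $C$ being of fibre type forces the generic fibre of $C\to \overline{f(C)}$ to have dimension $m$, not $0$.
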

 
\begin{proof}[Proof of Claim]
Note that subsets of non-fiber type of $X$ correspond to irreducible closed subsets that do not contain any irreducible component of any fiber of $f$ since $f$ is flat. Due to Theorem \ref{genFanofib}, it suffices to show that any non-klt center of $X$ does not contain any irreducible component of $X_b=f^{-1}(b)$ fiber of $f$ over $b\in B$ if $B$ is klt. This fact immediately follows from Proposition \ref{proplem} and Lemma \ref{lemprop} below. The last assertion of Theorem \ref{Fanofib} follows from the fact that adiabatic K-semistability implies $\mathfrak{f}$-semistability (see Remark \ref{trrem}).
\end{proof}
 
\begin{lem}\label{lemprop}
Let $f:(X,H)\to (B,L)$ be a smooth surjective morphism. If $B$ has only klt singularities, so does $X$.
\end{lem}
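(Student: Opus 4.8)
The plan is to produce a single log resolution of $X$, obtained by base change from a log resolution of $B$, on which all discrepancies are $>-1$. First I would record that $X$ is normal and $\mathbb{Q}$-Gorenstein: since $B$ is klt it is normal and $\mathbb{Q}$-Gorenstein, and as $f$ is smooth (so flat with regular fibres) normality ascends to $X$; moreover $K_{X/B}\coloneq K_X-f^*K_B$ is Cartier because $\omega_{X/B}=\det\Omega_{X/B}$ is a line bundle, so $K_X=K_{X/B}+f^*K_B$ is $\mathbb{Q}$-Cartier. Thus the discrepancies of $X$ are defined and it suffices to compute them on one log resolution.

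Next I would fix a log resolution $g\colon Y\to B$ of $(B,0)$ and form the cartesian square with projections $p\colon Y'=X\times_B Y\to Y$ and $q\colon Y'\to X$, so that $f\circ q=g\circ p$. Since $f$ is smooth, its base change $p$ is smooth and hence $Y'$ is smooth; since $g$ is proper and an isomorphism over a dense open $U\subset B$, its base change $q$ is proper and an isomorphism over the dense open $f^{-1}(U)\subset X$, hence proper and birational. Finally $\mathrm{Exc}(q)=p^{-1}(\mathrm{Exc}(g))$, and because $p$ is smooth the preimage of the snc divisor $\mathrm{Exc}(g)$ stays snc; therefore $q\colon Y'\to X$ is a log resolution of $X$.

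It then remains to compare discrepancies. Writing $K_Y=g^*K_B+E$ with $E=\sum_i a_iE_i$ the discrepancy divisor of $g$, the assumption that $B$ is klt says that every $a_i>-1$. The key input is the base-change compatibility of relative dualizing sheaves $\omega_{Y'/Y}\cong q^*\omega_{X/B}$ for the smooth morphism $p$, i.e. $K_{Y'/Y}=q^*K_{X/B}$; combining this with $K_{Y'}=K_{Y'/Y}+p^*K_Y$ and $f\circ q=g\circ p$ gives $K_{Y'}=q^*K_{X/B}+q^*f^*K_B+p^*E=q^*K_X+p^*E$, so that $K_{Y'/X}=p^*E=\sum_i a_i\,p^*E_i$. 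As $p$ is smooth, each component of $p^*E_i$ appears with the same coefficient $a_i>-1$, so every discrepancy of $q$ is $>-1$ and $X$ is klt. The main, and indeed only nontrivial, obstacle is the identity $\omega_{Y'/Y}\cong q^*\omega_{X/B}$ under smooth base change, from which preservation of discrepancies is formal; the rest is bookkeeping with the cartesian square.
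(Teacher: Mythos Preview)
Your proof is correct and follows essentially the same approach as the paper: form the fibre product of $X$ with a log resolution of $B$, use base-change compatibility of the relative canonical sheaf ($q^*\Omega_{X/B}\cong\Omega_{Y'/Y}$, hence $q^*K_{X/B}=K_{Y'/Y}$), and read off that the discrepancies of $q$ are the pullbacks of those of $g$. The only cosmetic differences are that the paper splits the discrepancy divisor into its effective and anti-effective parts and explicitly notes that pullbacks of distinct prime divisors under the smooth map have no common components, while you add the (useful) preliminary verification that $X$ is normal and $\mathbb{Q}$-Gorenstein.
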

\begin{proof}
Let $\pi:B'\to B$ be a resolution of singularities of $B$ and $(X',\Delta')=(X\times _{B}B',X\times _{B}\Delta )$ where $\Delta=\mathrm{Ex}(\pi)$ is snc. Note that $(X',\Delta')$ is log smooth since $f':X'\to B'$ induced by $f$ is also a smooth morphism whose all fiber is connected. Note also that for any two prime divisors $D\ne D'$ on $B'$, $f'^{*}(D)=\lfloor f'^{*}(D) \rfloor$ and $f'^{*}(D)$ and $f'^{*}(D')$ have no common component due to the property of $f'$. Let $\mu :X'\to X$ be the canonical projection respectively. Since $f'$ is the base change of $f$, $\mu^*\Omega _{X/B}=\Omega _{X'/B'}$ and hence $\mu^*K_{X/B}=K_{X'/B'} $. On the other hand, $K_{B'}+F=\pi^*K_B+E$ where $E$ and $F$ are effective divisors that have no common components and $\lfloor F\rfloor =0$. Therefore,
$$
K_{X'}+f'^*(F)=\mu^*K_{X}+f'^*(E)
$$
where $f'^*(F)$ and $f'^*(E)$ are effective divisors that have no common components and $\lfloor f'^*(F)\rfloor =0$. Thus, $X$ has only klt singularities. 
\end{proof}

\begin{prop}\label{proplem}
Let $f:(X,H)\to (B,L)$ be a flat polarized algebraic fiber space whose all fiber is connected and reduced. If $B$ has only klt singularities, $X$ has no lc center whose support contains any irreducible components of $X_b$ for $b\in B$.
\end{prop}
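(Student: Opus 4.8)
The plan is to reduce the statement to the smooth-morphism case already settled in Lemma \ref{lemprop}. The mechanism is that reducedness of the fibers, together with flatness, forces $f$ to be smooth at the generic point of every fiber component; once this is known, the only possible source of non-klt behaviour along a fiber is the singularity of the base $B$, which is ruled out by the klt hypothesis.

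First I would fix $b\in B$, an irreducible component $X_b^{(i)}$ of the fiber $X_b=f^{-1}(b)$, and its generic point $\eta_i$, and prove the local claim that $f$ is smooth at $\eta_i$. Since $X_b$ is reduced, it is a reduced scheme of finite type over the characteristic-zero (hence perfect) field $k(b)$, so it is generically smooth and in particular smooth at the generic point $\eta_i$ of each of its components. As $f$ is flat at $\eta_i$ and its fiber $X_b$ is smooth at $\eta_i$, the morphism $f$ is smooth at $\eta_i$, hence smooth on a Zariski-open neighbourhood $X^{\circ}\ni\eta_i$. On $X^{\circ}$ the relative canonical $K_{X/B}$ is an honest line bundle, so $K_X$ is $\mathbb{Q}$-Cartier near $\eta_i$ and lc centers through $\eta_i$ make sense without any extra hypothesis on $X$.

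Next I would apply Lemma \ref{lemprop} to the relatively smooth locus. The restriction $f|_{X^{\circ}}\colon X^{\circ}\to B$ is smooth onto an open subset of $B$, which is klt because $B$ is; and the proof of Lemma \ref{lemprop} uses only smoothness of the morphism (through $\mu^{*}K_{X/B}=K_{X'/B'}$ and $\lfloor f'^{*}F\rfloor=0$), so it carries over verbatim and shows that $X^{\circ}$ has only klt singularities. In particular $(X,0)$ is klt at $\eta_i$. To finish, suppose $W$ were an lc center with $\mathrm{Supp}\,W\supseteq X_b^{(i)}$; then $\eta_i\in W$, and writing $W=\overline{c_X(v)}$ for a valuation $v$ with $A_{(X,0)}(v)=0$, the center $c_X(v)$ is a generization of $\eta_i$ and therefore lies in the open klt locus $X^{\circ}$. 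This contradicts $A_{(X,0)}(v)=0$, since every nontrivial valuation centered on a klt variety has strictly positive log discrepancy. Hence no lc center contains a component of any fiber.

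The hard part—indeed the only nontrivial point—is the local smoothness step: it is precisely the reducedness of the fibers that promotes flatness to smoothness at the generic points of the fiber components, thereby confining all non-klt behaviour along the fibers to the singularities of $B$. That the klt hypothesis on $B$ cannot be dropped is illustrated by Example \ref{lcbaseex}, where $f$ is even globally smooth but the lc (non-klt) base produces lc centers of fiber type.
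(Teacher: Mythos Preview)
Your proof is correct and follows essentially the same approach as the paper's: both use that reducedness of the fibers plus flatness forces $f$ to be smooth at the generic point of every fiber component, then invoke Lemma \ref{lemprop} on the smooth locus to conclude that $X$ is klt there. You spell out more carefully the final step---that an lc center containing a fiber component would have its generic point (via generization) in the klt open locus, a contradiction---whereas the paper compresses this into the single line ``Hence, the proposition follows from Lemma \ref{lemprop}.''
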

\begin{proof}
Note that a general point of $X_b$ is smooth for $b\in B$. Since $f$ is faithfully flat, we conclude that there exists a closed subset $Z$ such that $f$ is smooth on $X\setminus Z$ and $Z$ contains no component of $X_b$ for any $b\in B$. Hence, the proposition follows from Lemma \ref{lemprop}.
\end{proof}

In the proof of \cite[Theorem 1.3]{GIToda}, Odaka applied \cite[1.4.3]{BCHM}. To prove Theorem \ref{genFanofib}, it is necessary to blow only up lc centers of non-fiber type and we cannot make use of the same argument directly. Hence, we need the slight modification of the technique developed for proving \cite[Theorem 1.3]{GIToda} and \cite[Proposition 9.9]{BHJ} as follows.
 
\begin{prop}\label{parres}
Let $(X,\Delta)$ be an lc pair and $E_j$ be prime divisors over $X$ (maybe exceptional) such that $A_{(X,\Delta)}(E_j)=0$ for $1\le j\le r$. Then there exists a closed subscheme $Z\subset \bigcup_{j=1}^rc_X(E_j)$ such that $$\emptyset\ne\mathrm{Rees}(Z)\subset\left\{ \frac{\mathrm{ord}_{E_j}}{\mathrm{ord}_{E_j}(Z)}|\mathrm{ord}_{E_j}(Z)\ne0\right\}_{j=1}^r.$$ Furthermore, suppose that one of the following holds:
\begin{itemize}
\item[(1)] $r=1$ and $E_1$ is exceptional over $X$,
\item[(2)] Any $E_j$ is a divisor on $X$.
\end{itemize}
Then, we have $$\mathrm{Rees}(Z)=\left\{ \frac{\mathrm{ord}_{E_j}}{\mathrm{ord}_{E_j}(Z)}\right\}_{j=1}^r.$$
\end{prop}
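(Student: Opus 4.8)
The plan is to realize the prescribed lc places $E_j$ as the Rees valuations of a subscheme by first building a birational model on which the $E_j$ live as divisors and then extracting an ideal from a relatively anti-ample combination of them. Recall that for a closed subscheme $Z$ with normalized blow-up $\pi:\tilde X\to X$ and inverse image divisor $D=\pi^{-1}(Z)$, the elements of $\mathrm{Rees}(Z)$ are the normalized valuations $\mathrm{ord}_E/\mathrm{ord}_E(D)$ attached to the exceptional components $E$ of $\pi$; thus it suffices to produce $\mathfrak a\subset\mathcal O_X$ whose normalized blow-up has exceptional divisor supported on (a subset of) the $E_j$, with the remaining normalizations matching $\mathrm{ord}_{E_j}(Z)$.

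First I would construct the model. Separate the $E_j$ into those exceptional over $X$ and those already prime on $X$; the latter need no extraction and satisfy $c_X(E_j)=E_j$. For the exceptional ones, take a log resolution $\mu:Y\to X$ of $(X,\Delta)$ on which every $E_j$ appears, with exceptional prime divisors $\{E_j\}\cup\{F_k\}$ where the $F_k$ are auxiliary. Using $A_{(X,\Delta)}(E_j)=0$, the discrepancy relation reads
\[
K_Y+\mu_*^{-1}\Delta+\textstyle\sum_j E_j+\sum_k F_k=\mu^*(K_X+\Delta)+\sum_k A_{(X,\Delta)}(F_k)\,F_k,
\]
so that, for $\Gamma=\mu_*^{-1}\Delta+\sum_jE_j+\sum_kF_k$, we have $K_Y+\Gamma\equiv_X\sum_k A_{(X,\Delta)}(F_k)F_k$, which is effective and exceptional. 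Running a $(K_Y+\Gamma)$-MMP over $X$ (via \cite{BCHM}, as in the construction of dlt and lc modifications in \cite{OX}, \cite{HX}) contracts the $F_k$ with $A_{(X,\Delta)}(F_k)>0$ and lands on a $\mathbb Q$-factorial model $g:W\to X$ whose exceptional divisors are the $E_j$.

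On $W$ the exceptional locus is the divisor $\bigcup_jE_j$ and $W$ is $\mathbb Q$-factorial, so a standard consequence of the negativity lemma (cf. \cite{KoMo}) yields positive rationals $a_j$ with $-D$ being $g$-ample for $D=\sum_j a_jE_j$. For $m\gg0$ divisible, $\mathfrak a:=g_*\mathcal O_W(-mD)$ is an ideal sheaf, $W$ recovers the normalized blow-up of $Z:=V(\mathfrak a)$, and its inverse image divisor is $mD$. Hence $\mathrm{Rees}(Z)\subseteq\{\mathrm{ord}_{E_j}/(ma_j)\}=\{\mathrm{ord}_{E_j}/\mathrm{ord}_{E_j}(Z)\}$, which is nonempty because $D\neq0$, while $\mathrm{Supp}\,Z=g(\mathrm{Exc}\,g)=\bigcup_j c_X(E_j)$ gives the containment $Z\subset\bigcup_j c_X(E_j)$. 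In case (1), $g$ is a divisorial extraction with the single exceptional divisor $E_1$, which is therefore forced to be a genuine component of the inverse image, so $\mathrm{Rees}(Z)=\{\mathrm{ord}_{E_1}/\mathrm{ord}_{E_1}(Z)\}$; in case (2) every $E_j$ is already a divisor on $X$, so no contraction can absorb it and each $\mathrm{ord}_{E_j}$ survives, giving equality.

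The main obstacle is the first step in the genuinely log canonical (non-klt) range: since the $E_j$ have log discrepancy exactly $0$, the pair $(Y,\Gamma)$ lies on the boundary of the lc locus, and the MMP need not contract an auxiliary divisor $F_k$ that is itself an lc place (i.e.\ $A_{(X,\Delta)}(F_k)=0$), as such $F_k$ enters $K_Y+\Gamma$ with coefficient $0$. Controlling or perturbing these boundary divisors — so that no unwanted lc place survives as an exceptional divisor of $g$, and no listed $E_j$ is accidentally contracted — is the delicate point; it is precisely this phenomenon that forces the general statement to assert only the inclusion $\mathrm{Rees}(Z)\subseteq\{\dots\}$ together with nonemptiness, with equality recovered only in the two clean situations (1) and (2) where such interference cannot occur.
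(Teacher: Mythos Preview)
Your overall strategy matches the paper's: build a birational model over $X$ carrying the $E_j$, arrange that an effective combination of the exceptional divisors is relatively anti-ample, and read off $Z$ from the resulting ideal. But there is a genuine gap at the step where you claim that the single $(K_Y+\Gamma)$-MMP over $X$ ``lands on a $\mathbb Q$-factorial model $g:W\to X$ whose exceptional divisors are the $E_j$.'' As you yourself note, this MMP only contracts those $F_k$ with $A_{(X,\Delta)}(F_k)>0$; any auxiliary lc place $F_k$ with $A_{(X,\Delta)}(F_k)=0$ may survive. Your final paragraph then draws the wrong conclusion from this: if such an $F_k$ survives on $W$, it contributes a Rees valuation of $Z$ that is \emph{not} among the $\mathrm{ord}_{E_j}$, so the asserted inclusion $\mathrm{Rees}(Z)\subset\{\mathrm{ord}_{E_j}/\mathrm{ord}_{E_j}(Z)\}$ fails outright, not merely the equality. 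The phenomenon you describe is not the reason the statement only claims an inclusion; rather, the inclusion must be \emph{engineered}, and the possible loss of some $E_j$ is the price paid for that.

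The paper's proof fills this gap with a second MMP. After the first MMP produces a $\mathbb Q$-factorial dlt model $X_1\to X$ crepant to $(X,\Delta)$ (so its exceptional divisors are all lc places, possibly more than the $E_j$), one writes the boundary on $X_1$ as $D_1''+D_1'$, where $D_1'$ is the strict transform of $\sum_j E_j$, and then runs a $(K_{X_1}+D_1''+(1-\delta)D_1')$-MMP over $X$ for small $\delta>0$. Since $K_{X_1}+D_1''+D_1'\sim_{X,\mathbb Q}0$, this is the same as a $(-\delta D_1')$-MMP; by \cite{B} or \cite{HX} it terminates with a log canonical model $f_2:X_2\to X$ on which $-D_2'$ is $f_2$-ample. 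This forces $\mathrm{Ex}(f_2)\subset\mathrm{Supp}\,D_2'$, so every exceptional divisor of $f_2$ is one of the $E_j$ and the inclusion holds. The nonemptiness comes from checking $D_2'\ne 0$ (otherwise $X_2\cong X$, contradicting that $X_2$ is the lc model of the perturbed pair on $X_1$). Cases (1) and (2) are then immediate, as you say. The missing idea in your write-up is precisely this $\delta$-perturbation and second run; merely alluding to ``perturbing these boundary divisors'' does not suffice, since the direction of the perturbation (lowering the coefficients of the desired $E_j$, not of the unwanted $F_k$) is what makes $-D_2'$ relatively ample and hence what guarantees the inclusion.
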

 
\begin{proof}
Let $f:\tilde{X}\to (X,\Delta)$ be a log resolution such that $E_j$'s are smooth divisors on $\tilde{X}$. Suppose that
\[
K_{\tilde{X}}+f^{-1}_*\Delta+F=f^*(K_X+\Delta)+\sum_i A_{(X,\Delta)}(F_i)F_i,
\]
where $F_i$ are irreducible components of $f$-exceptional divisors and $F=\sum_iF_i$. By assumption, $A_{(X,\Delta)}(F_i)\ge 0$. Then by the proof of \cite[4.1]{Fuj}, we can conclude that the $K_{\tilde{X}}+f^{-1}_*\Delta+F$-MMP with scaling over $X$ terminates with a $\mathbb{Q}$-factorial dlt minimal model $(X_1,(f_1^{-1})_*\Delta+F_1)$.
Here, let $f_1:X_1\to X$ be the structure morphism and $D_1$ be the strict transform of $F$. Note that any $F_i$ such that $A_{(X,\Delta)}(F_i)\ne 0$ is contracted but any $E_j$ is not contracted on $X_1$. Then
\[
K_{X_1}+(f_1)_*^{-1}\Delta+D_1=f_1^*(K_X+\Delta)
\]
Let $D_1'$ be the strict transformation of $\sum_{j=1}^r E_j$ and $D_1''=(f_1)_*^{-1}\Delta+D_1-D_1'$. Since $K_{X_1}+(f_1)_*^{-1}\Delta+D_1\sim_{X,\mathbb{Q}}0$, the $K_{X_1}+D_1''+(1-\delta)D_1'$-MMP with scaling over $X$ terminates with a good minimal model for $0<\delta<1$ due to \cite[Theorem 1.1]{B} or \cite[Theorem 1.6]{HX}. Therefore, there exists the log canonical model $X_2$ of $(X_1,D_1''+(1-\delta)D_1')$ over $X$. Let $f_2:X_2\to X$ be the structure morphism and the strict transformations of $D_1'$ and $D_1''$ on $X_2$ be $D_2'$ and $D_2''$ respectively. Then
\[
K_{X_2}+D_2''+(1-\delta)D_2'=f_2^*(K_X+\Delta)-\delta D_2'
\]
and hence $-D_2'$ is $f_2$-ample. Hence, the exceptional set $\mathrm{Ex}(f_2)\subset D_2'$ and any exceptional divisor other than $E_1,\cdots,E_r$ is contracted. If the condition (2) holds, $D_2'=(f_2)^{-1}_*(\sum_{j=1}^rE_j)\ne 0$ and hence we have the second assertion. On the other hand, some $E_j$ is not contracted on $X_2$. Indeed, if any $E_j$ is contracted, we have $D_2'=0$. Then, $(X,\Delta)\cong(X_2,D_2''+(1-\delta)D_2')$. This contradicts to that $(X_2,D_2''+(1-\delta)D_2')$ is the log canonical model of $(X_1,D_1''+(1-\delta)D_1')$. Therefore, the first assertion holds. Furthermore, we have the second assertion by the first one if (1) holds.
\end{proof}
 
\begin{proof}[Proof of Theorem \ref{genFanofib}]
First, we may assume that $\lambda=1$. Let $\mathrm{dim}\,X=N$ and $\mathrm{dim}\,B=n$. Assume that the theorem failed. In other words, we assume that there exists an $\mathfrak{f}$-semistable polarized algebraic fiber space pair $f:(X,\Delta,H)\to (B,L)$ such that there exists at least one lc center of non-fiber type. By Theorem \ref{singular}, it follows that $(X,\Delta)$ is lc. As in the proof of Theorem \ref{singular}, we may also assume that $H$ is ample and $L$ is very ample. Then there exists a closed subscheme $Z$ whose Rees valuations $v$ have log discrepancies $A_{(X,\Delta)}(v)=0$ and are not of fiber type due to Proposition \ref{parres}. Note that $Z$ is of non-fiber type. Let $(\mathcal{X},\mathcal{H})$ be the normalization of the deformation to the normal cone of $Z$, $E=H_{\mathbb{A}^1}-\mathcal{H}$ be the $X\times \mathbb{A}^1$-antiample exceptional divisor and $\mathcal{H}_\epsilon =H_{\mathbb{A}^1}-\epsilon E$ is a positive non-Archimedean metric normalized with respect to the central fiber. We may assume that $\mathcal{H}$ is $\mathbb{A}^1$-ample. Decompose $E=E_1+E_2+\cdots +E_N$ where each irreducible component $E_i^{(s)}$ of $E_i$ has center $Z_i^{(s)}\subset X\times \{ 0\}$ such that $\mathrm{codim}_XZ_i^{(s)}=i$.

We will prove that there exists $k$ such that $W^{\Delta}_k(\mathcal{X},\mathcal{H}_\epsilon)<0$ and $W^{\Delta}_i(\mathcal{X},\mathcal{H}_\epsilon)=0$ for $i<k$ for sufficiently small $\epsilon>0$ as in the proof of Theorem \ref{singular}. First, we take general $D_i$'s. As in the proof, there exists $k$ such that $\mathcal{X}\cap D_1\cap \cdots\cap D_{n-k}$ is not trivial but $\mathcal{X}\cap D_1\cap \cdots\cap D_{n-k+1}$ is trivial. Similarly to the proof of Claim in Theorem \ref{singular}, we can see all the $Z\cap D_1\cap \cdots\cap D_{i}$ also has Rees valuations $v$ that have log discrepancies $A_{(X\cap D_1\cap \cdots\cap D_{i},\Delta\cap D_1\cap \cdots\cap D_{i})}(v)=0$. Therefore, we may assume that $k=n$ by cutting $X$ and $\mathcal{X}$ by general divisors in $|L|$ by Proposition \ref{me}. Furthermore, we may also assume that the cycle $L\cdot E=0$ and hence $E_1=E_2=\cdots =E_{n-1}=E_n=\emptyset$ by the assumption. Let $r=\min \{ i|E_i\ne \emptyset \}>n$. Note that $W^{\Delta}_n(\mathcal{X},\mathcal{H}_\epsilon)=H_{\Delta}^{\mathrm{NA}}(\mathcal{X},\mathcal{H}_\epsilon)-I^{\mathrm{NA}}(\mathcal{X},\mathcal{H}_\epsilon)+(n+1)J^{\mathrm{NA}}(\mathcal{X},\mathcal{H}_\epsilon)$ and $H_{\Delta}^{\mathrm{NA}}(\mathcal{X},\mathcal{H}_\epsilon)=0$ now by Proposition \ref{me}. Let $\{E^{(s)}_j\}_{s=1}^{t_j}$ be the set of irreducible components of $E_j$. Here, we recall the notation in \cite[\S9.3]{BHJ}. Let also $Z_{E^{(s)}_j}$ be the center of $v_{E^{(s)}_j}$ on $X$ and $F_{E_j^{(s)}}$ be the generic fiber of the induced rational map $E_j^{(s)}\dashrightarrow Z_{E_j^{(s)}}\times \{0\}$. If $E_j=\sum_{s=1}^{t_j}m_sE^{(s)}_j$ for $m_s>0$, then for sufficiently small $\epsilon>0$
\[
a_i^{(j)}(\epsilon)\coloneq\sum_{s=1}^{t_j} m_sE^{(s)}_j\cdot \mathcal{H}_\epsilon^i\cdot H_{\mathbb{P}^1}^{N-i} =\left\{
\begin{split}
\epsilon ^j\sum_{s=1}^{t_j} m_s \mathrm{deg}_{E^{(s)}_j}(\mathcal{X},\mathcal{H})\binom{i}{j}(Z_{E^{(s)}_j}\cdot H^{n-j})+O(\epsilon^{j+1}) \quad \mathrm{for} \, i\ge j \\
0\quad \mathrm{for} \, i< j
\end{split}\right.
\]
where $\mathrm{deg}_{E^{(s)}_j}(\mathcal{X},\mathcal{H})=(F_{E^{(s)}_j}\cdot \mathcal{H}^j)>0$. Note that $ \mathrm{deg}_{E^{(s)}_j}(\mathcal{X},\mathcal{H})(Z_{E^{(s)}_j}\cdot H^{n-j})>0$ by ampleness of $\mathcal{H}_\epsilon$. Then, we have by \cite[Lemma 7.4]{BHJ}
\begin{align*}
(H)^N\,I^{\mathrm{NA}}(\mathcal{H}_\epsilon)=& \epsilon \sum_{j=r}^N a_N^{(j)}(\epsilon) =\epsilon a_N^{(r)}(\epsilon)+O(\epsilon^{r+2}), \\
(H)^N\,J^{\mathrm{NA}}(\mathcal{H}_\epsilon)=& \frac{1}{N+1}\epsilon \sum_{j=r}^N \sum_{i=j}^N a_i^{(j)}(\epsilon)= \frac{1}{N+1}\epsilon \sum_{i=r}^N a_i^{(r)}(\epsilon) +O(\epsilon^{r+2}).
\end{align*}
Therefore, we have
\begin{align*}
(H)^N(I^{\mathrm{NA}}(\mathcal{H}_\epsilon)-&(n+1)J^{\mathrm{NA}}(\mathcal{H}_\epsilon))= \epsilon a_N^{(r)}(\epsilon)- \frac{n+1}{N+1}\epsilon \sum_{i=r}^N a_i^{(r)}(\epsilon) +O(\epsilon^{r+2}) \\
=&\epsilon^{r+1}\sum_sm_s\mathrm{deg}_{E^{(s)}_r}(\mathcal{X},\mathcal{H})(Z_{E^{(s)}_r}\cdot H^{n-r})\left(\binom{N}{r}-\frac{n+1}{N+1}\sum_{i=r}^{N}\binom{i}{r}\right) +O(\epsilon^{r+2}) \\
=&\epsilon^{r+1}\sum_s m_s\mathrm{deg}_{E^{(s)}_r}(\mathcal{X},\mathcal{H})(Z_{E^{(s)}_r}\cdot H^{n-r})\binom{N}{r}\left(1-\frac{n+1}{r+1}\right)+O(\epsilon^{r+2}).
\end{align*}
Since $r>n$, $ I^{\mathrm{NA}}(\mathcal{H}_\epsilon)-(n+1)J^{\mathrm{NA}}(\mathcal{H}_\epsilon)>0$ for sufficiently small $\epsilon>0$. Therefore, $W^{\Delta}_n(\mathcal{X},\mathcal{H}_{\epsilon})<0$ for sufficiently small $\epsilon>0$ and this is a contradiction. Thus, we complete the proof.
\end{proof}
 
\subsection{Non-normal case}\label{nnc}
 
Next, we consider the deminormal case. We will define $\mathfrak{f}$-stability of deminormal algebraic fiber spaces and prove generalizations (Theorems \ref{dingular} and \ref{dFanofib}) of Theorem \ref{singular} and Theorem \ref{Fanofib} in this subsection.
\begin{de}\label{dalgfib}
Let $X$ be a deminormal scheme and $\nu_i:X_i\to X$ be the normalization of irreducible components (compare this with Definition \ref{algfib}). A surjective morphism $f:(X,\Delta)\to B$ of equidimensional reduced schemes is a {\it deminormal algebraic fiber space pair} if $f_i:X_i\to B_i$ is an algebraic fiber space for any $X_i$ where $f_i$ is induced by $f\circ\nu_i$ and $B_i$ is the normalization of $f\circ\nu_i(X_i)$, which is an irreducible component of $B$.
\end{de}
 
\begin{de}[cf., Definition \ref{ds1}]\label{ds2}
Suppose that $f:(X,\Delta,H)\to (B,L)$ is a deminormal polarized algebraic fiber space pair with a boundary $\Delta$. Let $\mathrm{rel.dim}\, f=m$ and $\mathrm{dim}\, B=n$. We remark that we can define (semi) fibration degenerations for $f$ similarly to Definition \ref{dedede}. For any fibration degeneration $(\mathcal{X},\mathcal{H})$ for $f$, we define constants $W^{\Delta}_0(\mathcal{X},\mathcal{H}),\, W^{\Delta}_1(\mathcal{X},\mathcal{H}), \cdots ,W^{\Delta}_n(\mathcal{X},\mathcal{H})$ and a rational function $W^{\Delta}_{n+1}(\mathcal{X},\mathcal{H})(j)$ so that the partial fraction decomposition of $\mathrm{DF}_{\Delta}(\mathcal{X},\mathcal{H}+jL)$ in $j$ is as follows:
\[
V(H+jL)\mathrm{DF}_{\Delta}(\mathcal{X},\mathcal{H}+jL)=W^{\Delta}_{n+1}(\mathcal{X},\mathcal{H})(j) + \sum_{i=0}^n j^iW^{\Delta}_{n-i}(\mathcal{X},\mathcal{H}).
\]
 Then $f$ is called
\begin{itemize}
\item {\it $\mathfrak{f}$-semistable} if $W^{\Delta}_0(\mathcal{X},\mathcal{H})\ge 0$ and $W^{\Delta}_0(\mathcal{X},\mathcal{H})=W^{\Delta}_1(\mathcal{X},\mathcal{H})=\cdots =W^{\Delta}_i(\mathcal{X},\mathcal{H})=0\Rightarrow W^{\Delta}_{i+1}(\mathcal{X},\mathcal{H})\ge0$ for $i=0,1,\cdots ,n-1$ for any fibration degeneration.
\item {\it $\mathfrak{f}$-stable} if $f$ is $\mathfrak{f}$-semistable and $W^{\Delta}_i(\mathcal{X},\mathcal{H})= 0,\, i=0,1,\cdots ,n-1\, \Rightarrow W^{\Delta}_n(\mathcal{X},\mathcal{H})>0$ for any fibration degeneration not almost trivial as a test configuration for $(X,H)$.
\end{itemize}
Note that $f$ is
\begin{itemize}
\item $\mathfrak{f}$-semistable if and only if $W^{\Delta}_0(\mathcal{X},\mathcal{H})\ge 0$ and $W^{\Delta}_0(\mathcal{X},\mathcal{H})=W^{\Delta}_1(\mathcal{X},\mathcal{H})=\cdots =W^{\Delta}_i(\mathcal{X},\mathcal{H})=0\Rightarrow W^{\Delta}_{i+1}(\mathcal{X},\mathcal{H})\ge0$ for $i=0,1,\cdots ,n-1$ for any test configuration $(\mathcal{X},\mathcal{H})$ dominating $X_{\mathbb{A}^1}$ such that $(\mathcal{X},\mathcal{H}+jL)$ is a semiample test configuration for sufficiently large $j$.
\item $\mathfrak{f}$-stable if and only if $f$ is $\mathfrak{f}$-semistable and $W^{\Delta}_i(\mathcal{X},\mathcal{H})= 0,\, i=0,1,\cdots ,n-1\, \Rightarrow W^{\Delta}_n(\mathcal{X},\mathcal{H})>0$ for any non-almost-trivial test configuration $(\mathcal{X},\mathcal{H})$ for $(X,H)$ dominating $X_{\mathbb{A}^1}$ such that $\mathcal{H}+jL$ is semiample for sufficiently large $j$.
\end{itemize}
This follows similarly to Definition \ref{ds1}. If $\Delta=0$, we will denote $W^{\Delta}_i=W_i$.
\end{de}
 
If $(\mathcal{X},\mathcal{H})$ as above is partially normal and the normalization $(\widetilde{\mathcal{X}},\widetilde{\mathcal{H}})$ of $(\mathcal{X},\mathcal{H})$ has reduced central fiber, then
\[
\mathrm{DF}_{\Delta}(\mathcal{X},\mathcal{H}+jL)=M^\mathrm{NA}_{\pi_*^{-1}\Delta+\mathfrak{cond}_{\tilde{X}}}(\widetilde{\mathcal{X}},\widetilde{\mathcal{H}}+jL)
\]
where $\pi:\tilde{X}\to X$ is the normalization and hence it also holds that
\[
W_k^{\Delta}(\mathcal{X},\mathcal{H})=W_k^{\pi_*^{-1}\Delta+\mathfrak{cond}_{\tilde{X}}}(\widetilde{\mathcal{X}},\widetilde{\mathcal{H}})
\]
for $k$ (cf., \cite{Od}). 

We prepare the following to calculate $W_k^{\Delta}$ by taking the normalization,
\begin{de}\label{pppipi}
Let $X=\bigcup_{i=1}^r X_i$ be the irreducible decomposition.
For $1\le i\le r$, let $b_i$ be general point of $f(X_i)$. Let also $\widetilde{X_i}$ be the normalization of $X_i$. A deminormal algebraic fiber space pair $f:(X,\Delta,H)\to (B,L)$ has {\it the same scalar curvature with respect to the fiber of} $f$ if
\[
S\left(\widetilde{(X_i)_{b_i}},(\pi_*^{-1}\Delta+\mathfrak{cond}_{\tilde{X}})|_{\widetilde{(X_i)_{b_i}}},H|_{\widetilde{(X_i)_{b_i}}}\right)=S\left(\widetilde{(X_j)_{b_j}},(\pi_*^{-1}\Delta+\mathfrak{cond}_{\tilde{X}})|_{\widetilde{(X_j)_{b_j}}},H|_{\widetilde{(X_j)_{b_j}}}\right)
\]
for $1\le i<j\le r$.
\end{de}
Then we have the following:
 
\begin{lem}[cf. {\cite[Theorem 6.6]{Hat}}]\label{samescalar}
Notations as in \ref{pppipi}. For $1\le i\le r$, let $b_i$ be general point of $f(X_i)$. If $f:(X,\Delta,H)\to (B,L)$ does not have the same scalar curvature with respect to the fiber of $f$, then $f$ is $\mathfrak{f}$-unstable.
\end{lem}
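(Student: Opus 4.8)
The plan is to show that $\mathfrak{f}$-semistability forces the general fibre of $f$ to be K-semistable as a deminormal polarized pair, and that K-semistability of a \emph{reducible} fibre already forces all of its components to share a common average scalar curvature; the contrapositive is exactly the assertion. Throughout I would work with $W_0$, the leading coefficient in the definition of $\mathfrak{f}$-semistability, since it is the invariant that localizes to the fibre direction.

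First I would reduce to $W_0$. By the deminormal analogue of \cite[Lemma 2.33]{DS4} (the $W_0$-formula, proved exactly as in the normal case), for any fibration degeneration $(\mathcal{X},\mathcal{H})$ for $f$ one has
\[
W_0(\mathcal{X},\mathcal{H})=\binom{n+m}{n}V(L)\cdot\mathrm{DF}_{\Delta_b}(\mathcal{X}_b,\mathcal{H}_b)
\]
for general $b\in B$, where $(\mathcal{X}_b,\mathcal{H}_b)$ is the test configuration induced on the general fibre $(X_b,\Delta_b,H_b)$. Since $\mathfrak{f}$-semistability requires in particular $W_0(\mathcal{X},\mathcal{H})\ge 0$ for every fibration degeneration, it suffices to produce a single $(\mathcal{X},\mathcal{H})$ with $\mathrm{DF}_{\Delta_b}(\mathcal{X}_b,\mathcal{H}_b)<0$.

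Next I would build the destabilising test configuration of the general fibre from the scalar-curvature imbalance. Let $\nu\colon\bigsqcup_i\widetilde{(X_i)_b}\to X_b$ be the normalization, set $V_i=(H|_{\widetilde{(X_i)_b}})^m$, $\mu_i=V_i/\sum_j V_j$, let $S_i$ be the scalar curvature of $\widetilde{(X_i)_b}$ with its pulled-back boundary and conductor, and put $\overline{S}=\sum_i\mu_i S_i$. On each normalized component take the product test configuration whose $\mathbb{G}_m$-linearization is twisted by a constant weight $c_i$, and let $\tilde{\mathcal{F}}$ be their disjoint union. Via Odaka's descent for partially normal test configurations \cite{Od} (the remark after Definition \ref{ds2}) this should be realized as the normalization of a partially normal test configuration $\mathcal{F}$ of $X_b$ with reduced central fibre, so that $\mathrm{DF}_{\Delta_b}(\mathcal{F})=M^{\mathrm{NA}}_{\tilde{\Delta}_b}(\tilde{\mathcal{F}})$. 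Computing the weight polynomial on the disjoint union by Riemann--Roch, using the identity $a_1^{(i)}/a_0^{(i)}=S_i/2$ for the (log) Hilbert polynomial of each component, gives
\[
\mathrm{DF}_{\Delta_b}(\mathcal{F})=\sum_i c_i\,\mu_i\,(\overline{S}-S_i).
\]
Taking $c_i=S_i-\overline{S}$ (cleared to integers, which only rescales $\mathrm{DF}$ by a positive factor) yields $\mathrm{DF}_{\Delta_b}(\mathcal{F})=-\sum_i\mu_i(S_i-\overline{S})^2$, strictly negative precisely when the $S_i$ are not all equal, i.e.\ when $f$ does not have the same scalar curvature with respect to the fibre. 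Spreading $\mathcal{F}$ out over $B$ (it is defined over the generic point of the relevant component of $B$, so its closure is a fibration degeneration with general fibre $\mathcal{F}$) then produces $(\mathcal{X},\mathcal{H})$ with $W_0(\mathcal{X},\mathcal{H})<0$, so $f$ is $\mathfrak{f}$-unstable.

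The main obstacle is the middle step. Because $X_b$ is connected while the weights $c_i$ differ across components glued along the conductor, \emph{no} product test configuration of $X_b$ has $\tilde{\mathcal{F}}$ as its normalization; one must instead realize $\mathcal{F}$ as a genuinely non-product, partially normal degeneration (after a finite base change if necessary) whose normalization is $\tilde{\mathcal{F}}$ and whose central fibre is reduced. This equivariant gluing along a degenerating conductor is exactly the mechanism behind \cite[Theorem 6.6]{Hat}, and controlling it — together with the bookkeeping required to compare components lying over different components of $B$, which is handled by running the same $W_0$-argument over a general point of each relevant base component — is where the real work lies. The weight-polynomial computation leading to the displayed formula for $\mathrm{DF}_{\Delta_b}(\mathcal{F})$ is, by contrast, routine.
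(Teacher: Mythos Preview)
Your strategy is correct and close in spirit to the paper's, but the paper resolves your ``main obstacle'' by an explicit global construction rather than by abstract gluing or spreading out. Instead of realizing an arbitrary weight vector $(c_i)$ on all components simultaneously, the paper relabels so that $\widetilde{(X_1)_{b_1}}$ has strictly maximal scalar curvature, sets $Z=X_1\cap\bigcup_{i\ge 2}X_i\subset X$, and takes $\mathcal{X}$ to be the partial normalization of the blow-up of $X_{\mathbb{A}^1}$ along $Z\times\{0\}$. On $\mathcal{X}$ the exceptional divisor $E$ and the strict transform $F$ of $X_1\times\{0\}$ are Cartier, so $\mathcal{H}=H_{\mathbb{A}^1}-\epsilon(E-\eta F)$ is a genuine polarization once $\eta$ is chosen with $-E+\eta F$ relatively ample; after a finite base change the central fibre is reduced. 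This is already a fibration degeneration for $f$ on the nose, so no spreading-out step is needed, and its normalization restricted to a general fibre is exactly your $\tilde{\mathcal{F}}$ with $c_1=\eta$, $c_i=0$ for $i\ge 2$, perturbed by the $\epsilon E$-term. The paper then computes
\[
\binom{n+m}{n}^{-1}W_0^{\Delta}(\mathcal{X},\mathcal{H})=\epsilon\eta\,(H|_{X_{1,b_1}})^m\Bigl(\sum_k(L|_{B_k})^n S(X_{b_k},\Delta_{b_k},H_{b_k})-(L^n)\,S_1\Bigr)+O(\epsilon^2),
\]
which is negative for small $\epsilon$ because $S_1$ exceeds the weighted average; this is the special case of your displayed formula with only one nonzero $c_i$.

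So the difference is: you optimize over all $(c_i)$ to get the variance $-\sum_i\mu_i(S_i-\overline S)^2$, while the paper uses the cruder but sufficient choice of shifting only the extremal component. The payoff of the paper's route is that the delicate descent/gluing you flag becomes a concrete blow-up along a closed subscheme of $X$ itself, with the twist by $F$ supplying the weight shift; nothing beyond \cite{Od} is needed. Your route would give a sharper inequality, but you would still have to produce $\mathcal{F}$ explicitly, and the blow-up construction is precisely how \cite[Theorem 6.6]{Hat} does it.
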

 
\begin{proof}
Suppose that $\mathrm{dim}\, B=n$ and $\mathrm{rel.dim}\,f=m$. Assume that
\[
S\left(\widetilde{(X_1)_{b_1}},(\pi_*^{-1}\Delta+\mathfrak{cond}_{\tilde{X}})|_{\widetilde{(X_1)_{b_1}}},H|_{\widetilde{(X_1)_{b_1}}}\right)>S\left(\widetilde{(X_j)_{b_j}},(\pi_*^{-1}\Delta+\mathfrak{cond}_{\tilde{X}})|_{\widetilde{(X_j)_{b_j}}},H|_{\widetilde{(X_j)_{b_j}}}\right)
\]
for $2\le j\le r$. Let $B=\bigcup B_k$ be the irreducible decomposition and $B_1=f(X_1)$. Then we obtain as \cite[Theorem 6.6]{Hat}
\begin{align*}
S&\left(\widetilde{(X_1)_{b_1}},(\pi_*^{-1}\Delta+\mathfrak{cond}_{\tilde{X}})|_{\widetilde{(X_1)_{b_1}}},H|_{\widetilde{(X_1)_{b_1}}}\right)>\frac{\sum_{k} (L|_{B_k})^nS(X_{b_k},\Delta_{b_k},H_{b_k})}{L^n}.
\end{align*}
 Let $Z=X_1\cap \bigcup_{i\ge 2}X_i$ and $\mathcal{X}$ be the partially normalization of the blow up of $X_{\mathbb{A}^1}$ along $Z\times\{0\}$ with the exceptional divisor $E$. Let $F$ be the strict transformation of $X_1\times\{0\}$. By taking finite base change via the $d$-th power map of $\mathbb{A}^1$, we may assume that the normalization $\widetilde{\mathcal{X}}$ of $\mathcal{X}$ has the reduced central fiber as in the proof of \cite[Proposition 7.16]{BHJ}. Choose $\eta>0$ such that $-E+\eta F$ is $X_{\mathbb{A}^1}$-ample and let
\[
\mathcal{H}=H_{\mathbb{A}^1}-\epsilon (E-\eta F)
\]
be a polarization of $\mathcal{X}$ for sufficiently small $\epsilon>0$.  Then we can prove that
\begin{align*}
\binom{n+m}{n}^{-1}W^\Delta_0(\mathcal{X},\mathcal{H})&=\sum_k(L|_{B_k})^n\left(K_{(\mathcal{X}_{b_k},\Delta_{b_k})/\mathbb{P}^1}\cdot \mathcal{H}_{b_k}^m+S(X_{b_k},\Delta_{b_k},H_{b_k})\frac{\mathcal{H}_{b_k}^{m+1}}{m+1}\right)\\
&=\epsilon\eta (H|_{X_1,b_1})^m \Biggl(\sum_{k} (L|_{B_k})^nS(X_{b_k},\Delta_{b_k},H_{b_k})\\
&-(L)^nS\left(\widetilde{(X_1)_{b_1}},(\pi_*^{-1}\Delta+\mathfrak{cond}_{\tilde{X}})|_{\widetilde{(X_1)_{b_1}}},H|_{\widetilde{(X_1)_{b_1}}}\right)\Biggr)+O(\epsilon^2)
\end{align*}
for general $b_k\in B_k$ similarly to the proof of \cite[Theorem 6.6]{Hat}. 
\end{proof}
 
 Therefore, if a reducible algebraic fiber space $f:(X,\Delta,H)\to (B,L)$ is $\mathfrak{f}$-semistable, then we can decompose $W^\Delta_k$ as follows,
 
\begin{lem}\label{duseful}
Let $f:(X,\Delta,H)\to (B,L)$ be a deminormal polarized algebraic fiber space pair that has the same scalar curvature with respect to the fiber of $f$, $0\le k\le\mathrm{dim}\, B=n$ and $(\mathcal{X},\mathcal{H})$ be a partially normal semiample test configuration for $(X,H)$ dominating $X_{\mathbb{A}^1}$. Suppose that $H$ is ample, $L$ is very ample and $\mathcal{X}$ has the reduced central fiber. Let $\nu:\tilde{X}\to X$ be the normalization and $\tilde{X}=\bigcup_{i=1}^r \widetilde{X_i}$ be the irreducible decomposition. Let also $\widetilde{\mathcal{X}}$ be the normalization of $\mathcal{X}$ and $\widetilde{\mathcal{X}}=\bigcup_{i=1}^r \widetilde{\mathcal{X}_i}$ be the irreducible decomposition where the indices corresponding to those of $\tilde{X}=\bigcup_{i=1}^r \widetilde{X_i}$. 

If $(\widetilde{\mathcal{X}_i},\mathcal{H}|_{\widetilde{\mathcal{X}_i}})$ is normalized with respect to the central fiber and $(\widetilde{\mathcal{X}_i}\cap D_1\cap\cdots\cap D_{n-j},\mathcal{H}|_{\widetilde{\mathcal{X}_i}\cap D_1\cap\cdots\cap D_{n-j}} )$ is trivial for $0\le j<k$ and for general ample divisors $D_1,\cdots,D_{n-k+1}\in|L|$, then
\[
W^\Delta_k(\mathcal{X},\mathcal{H})=\sum_{i=1}^rW^{(\pi_*^{-1}\Delta+\mathfrak{cond}_{\tilde{X}})|_{\widetilde{X}_i}}_k(\widetilde{\mathcal{X}_i},\mathcal{H}|_{\widetilde{\mathcal{X}_i}}).
\]
\end{lem}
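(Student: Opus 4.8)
The plan is to reduce everything to the normalization, apply the computation of Lemma \ref{induction} to each irreducible component separately, and then reassemble the pieces; the only genuinely delicate point will be the bookkeeping of the fiberwise scalar curvature. First, since $(\mathcal{X},\mathcal{H})$ is partially normal and its normalization $(\widetilde{\mathcal{X}},\widetilde{\mathcal{H}})$ has reduced central fiber, the identity recorded just after Definition \ref{ds2} gives
\[
W^\Delta_k(\mathcal{X},\mathcal{H})=W^{\Gamma}_k(\widetilde{\mathcal{X}},\widetilde{\mathcal{H}}),\qquad \Gamma=\pi_*^{-1}\Delta+\mathfrak{cond}_{\tilde{X}},
\]
so it suffices to decompose $W^\Gamma_k(\widetilde{\mathcal{X}},\widetilde{\mathcal{H}})$ along the disjoint union $\widetilde{\mathcal{X}}=\bigsqcup_i\widetilde{\mathcal{X}_i}$ of the normalizations of the components.

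Next, I would apply Lemma \ref{induction} to each $(\widetilde{\mathcal{X}_i},\mathcal{H}|_{\widetilde{\mathcal{X}_i}})$. Its hypotheses hold: by Definition \ref{dalgfib} each $f_i\colon\widetilde{X_i}\to B_i$ is an algebraic fiber space, $H$ is ample, and by assumption each $\widetilde{\mathcal{X}_i}$ is a normal semiample test configuration dominating $(\widetilde{X_i})_{\mathbb{A}^1}$ and normalized with respect to the central fiber. The correction terms $\sum_{l>n-k}C_lJ^{\mathrm{NA}}(\cdots)$ of Lemma \ref{induction} involve only $J^{\mathrm{NA}}$ of the cuts $\widetilde{\mathcal{X}_i}\cap D_1\cap\cdots\cap D_l$ with $l\in\{n-k+1,\dots,n\}$, and these cuts are trivial by the standing hypothesis; since $J^{\mathrm{NA}}$ of a trivial test configuration vanishes, all of them drop out, leaving
\[
W^{\Gamma_i}_k(\widetilde{\mathcal{X}_i},\mathcal{H}|_{\widetilde{\mathcal{X}_i}})=\binom{n+m}{n-k}\left(K^{\mathrm{log}}_{(\widetilde{\mathcal{X}_i},\mathcal{D}_i)/\mathbb{P}^1}\cdot L^{n-k}\cdot(\mathcal{H}|_{\widetilde{\mathcal{X}_i}})^{m+k}+\frac{S_i}{m+k+1}(\mathcal{H}|_{\widetilde{\mathcal{X}_i}})^{m+k+1}\cdot L^{n-k}\right),
\]
where $\Gamma_i=\Gamma|_{\widetilde{X_i}}$ and $S_i=S\bigl((\widetilde{X_i})_{b_i},\Gamma_i|_{(\widetilde{X_i})_{b_i}},H|_{(\widetilde{X_i})_{b_i}}\bigr)$ in the notation of Definition \ref{pppipi}.

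Finally, I would sum over $i$ and compare with a direct evaluation of $W^\Gamma_k(\widetilde{\mathcal{X}})$. Intersection numbers on the disjoint union $\widetilde{\mathcal{X}}$ add over components, so the two intersection-theoretic terms reassemble to the corresponding terms on $\widetilde{\mathcal{X}}$. The point is that in the partial-fraction expansion of $V(H+jL)M^{\mathrm{NA}}_\Gamma(\widetilde{\mathcal{X}},\widetilde{\mathcal{H}}+jL)$, writing $M^{\mathrm{NA}}_\Gamma=H^{\mathrm{NA}}_\Gamma+(\mathcal{J}^{K_{(X,\Delta)}})^{\mathrm{NA}}$, every contribution is an additive intersection number \emph{except} the scalar-curvature coefficient $\lim_{j\to\infty}S(X,\Delta,H+jL)=S(X_b,\Delta_b,H_b)$, which is weighted by the total fiber volume. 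Here the hypothesis that $f$ has the same scalar curvature with respect to the fiber is indispensable: it forces all $S_i$ to coincide, and the volume-weighted averaging identity from the proof of Lemma \ref{samescalar} (cf. \cite[Theorem 6.6]{Hat}) then identifies this common value with $S(X_b,\Delta_b,H_b)$. Substituting the common value, and noting that the global correction terms vanish as well (the cuts of $\widetilde{\mathcal{X}}$ being trivial whenever those of each $\widetilde{\mathcal{X}_i}$ are), yields $\sum_i W^{\Gamma_i}_k(\widetilde{\mathcal{X}_i})=W^\Gamma_k(\widetilde{\mathcal{X}})=W^\Delta_k(\mathcal{X},\mathcal{H})$, which is the claim.

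I expect this last reconciliation of the scalar curvatures to be the main obstacle. The global constant $S(X,\Delta,H+jL)$ naturally weights each normalized component by its own volume, whereas the decomposition requires the single fiberwise constant to match every per-component fiber scalar curvature; this is precisely what the same-scalar-curvature hypothesis guarantees, and Lemma \ref{samescalar} shows that the decomposition genuinely fails without it. A secondary routine point to verify carefully is that the relative dimensions and the binomial factor $\binom{n+m}{n-k}$ are uniform across components, which follows from the equidimensionality of $X$ and $B$.
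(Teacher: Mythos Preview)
Your proposal is correct and follows essentially the same approach as the paper: reduce to the normalization via the identity after Definition \ref{ds2}, then invoke Lemma \ref{induction} on each component and use the same-scalar-curvature hypothesis (the content behind the reference to Lemma \ref{samescalar}) to match the fiberwise constants $S_i$. Your version is simply a more explicit unpacking of the paper's two-line argument, including the observation that the $J^{\mathrm{NA}}$ correction terms vanish on the trivial cuts and that equidimensionality makes the binomial factors uniform.
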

 
\begin{proof}
By the assumption, we have 
\[
\mathrm{DF}_\Delta(\mathcal{X},\mathcal{H}+jL)=M_{(\pi_*^{-1}\Delta+\mathfrak{cond}_{\tilde{X}})}^\mathrm{NA}(\widetilde{\mathcal{X}},\widetilde{\mathcal{H}}+jL)
\]
for any $j$. Moreover, since each $(\widetilde{\mathcal{X}_i},\mathcal{H}|_{\widetilde{\mathcal{X}_i}})$ is normalized, we have
\[
W_k^{(\pi_*^{-1}\Delta+\mathfrak{cond}_{\tilde{X}})}(\widetilde{\mathcal{X}},\widetilde{\mathcal{H}})=\sum_{i=1}^rW^{(\pi_*^{-1}\Delta+\mathfrak{cond}_{\tilde{X}})|_{\widetilde{X}_i}}_k(\widetilde{\mathcal{X}_i},\mathcal{H}|_{\widetilde{\mathcal{X}_i}})
\]
by Lemmas  \ref{induction} and \ref{samescalar}. 
\end{proof}

Note that if $(\mathcal{X},\mathcal{H})$ is a deformation to the normal cone of a closed subscheme $Z$ of $X$ with the exceptional divisor $E$ such that $\mathrm{dim}\,Z<\mathrm{dim}\,X$ and $\mathcal{H}=H_{\mathbb{P}^1}-\epsilon E$ for sufficiently small $\epsilon>0$, then $(\widetilde{\mathcal{X}_i},\mathcal{H}|_{\widetilde{\mathcal{X}_i}})$ is normalized with respect to the central fiber in Lemma \ref{duseful}. Then, we can prove the following,

\begin{thm}\label{dingular}
Let $f:(X,\Delta,H)\to (B,L)$ be a polarized deminormal algebraic fiber space. If $f $ is $\mathfrak{f}$-semistable, $(X,\Delta)$ has at most slc singularities.
\end{thm}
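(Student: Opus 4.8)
The plan is to reduce the deminormal statement to the already-proved normal case (Theorem \ref{singular}) through the normalization, using the decomposition of $W^\Delta_k$ supplied by Lemma \ref{duseful}. Suppose for contradiction that $(X,\Delta)$ is not slc; by definition this means that the normalization $\nu:\tilde X\to X$ carries a pair $(\tilde X,\Delta_{\tilde X})$, with $\Delta_{\tilde X}=\nu^{-1}_*\Delta+\mathfrak{cond}_{\tilde X}$, which is not lc. First I would dispose of the degenerate case: if $f$ does not have the same scalar curvature with respect to the fiber in the sense of Definition \ref{pppipi}, then Lemma \ref{samescalar} already shows $f$ is $\mathfrak{f}$-unstable, so we may assume $f$ has the same scalar curvature with respect to the fiber. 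Assuming next that $\lceil\Delta_{\tilde X}\rceil$ is reduced, the corollary to Theorem \ref{oxfh} produces a closed subscheme $Z\subset X$ whose pulled-back Rees valuations $v\in\mathrm{Rees}(\nu^{-1}Z)$ all satisfy $A_{(\tilde X,\Delta_{\tilde X})}(v)<0$.

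I would then take $(\mathcal{X},\mathcal{H}_\epsilon)$ to be the partially normalized deformation to the normal cone of $Z$, with exceptional divisor $E$ and $\mathcal{H}_\epsilon=H_{\mathbb{P}^1}-\epsilon E$, after replacing $H$ by $H+jL$ so that $H$ is ample and $\mathcal{H}_\epsilon$ is $\mathbb{A}^1$-ample for $0<\epsilon\ll1$ (Lemma \ref{trlem}), and after a $d$-th power base change in $\mathbb{A}^1$ to make the normalized central fiber reduced, exactly as in the proof of Lemma \ref{samescalar}. Using the identity $\mathrm{DF}_\Delta(\mathcal{X},\mathcal{H}_\epsilon+jL)=M^{\mathrm{NA}}_{\Delta_{\tilde X}}(\widetilde{\mathcal{X}},\widetilde{\mathcal{H}}_\epsilon+jL)$ valid for partially normal test configurations with reduced central fiber, together with Lemma \ref{duseful}, the invariant $W^\Delta_k(\mathcal{X},\mathcal{H}_\epsilon)$ splits as $\sum_i W^{\Delta_{\tilde X}|_{\widetilde{X_i}}}_k(\widetilde{\mathcal{X}_i},\mathcal{H}_\epsilon|_{\widetilde{\mathcal{X}_i}})$ over the irreducible components of $\widetilde{\mathcal{X}}$, each of which is now a genuine normal fibration degeneration to which the $\epsilon$-expansion of Theorem \ref{singular} applies.

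Now I would run that expansion on each summand. Cutting by general $D_1,\dots,D_{n-k}\in|L|$, I let $k$ be maximal such that $\widetilde{\mathcal{X}_i}\cap D_1\cap\cdots\cap D_{n-k+1}$ is trivial for every $i$, so that $W^\Delta_l=0$ for $l<k$ and the relevant $J^{\mathrm{NA}}$'s vanish. Because $Z$ was chosen so that its Rees valuations have negative log discrepancy on $(\tilde X,\Delta_{\tilde X})$, at least one component $\widetilde{X_i}$ fails to be lc along the cut of $\nu^{-1}Z$, and transferring this negativity through the hyperplane cuts by the adjunction argument of the Claim in Theorem \ref{singular} gives a non-Archimedean entropy term with leading coefficient $-T\epsilon^{r}$, $T>0$, while the Ricci-energy and Monge--Amp\`ere contributions on every summand are $O(\epsilon^{r+1})$ and the remaining summands are nonnegative. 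Hence $W^\Delta_k(\mathcal{X},\mathcal{H}_\epsilon)<0$ for $0<\epsilon\ll1$, contradicting $\mathfrak{f}$-semistability. The remaining case, where $\Delta_{\tilde X}=\sum a_iF_i$ has some $a_i>1$, is handled exactly as in the final paragraph of the proof of Theorem \ref{singular}: one takes the deformation to the normal cone of the offending component $F_j$ and extracts the same negative sign of $W^\Delta_k$.

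The main obstacle I anticipate is twofold. First, I must verify that the hypotheses of Lemma \ref{duseful} genuinely hold for the deformation to the normal cone of $Z$, namely that after the $d$-th power base change the normalization $\widetilde{\mathcal{X}}$ has reduced central fiber and that each $(\widetilde{\mathcal{X}_i},\mathcal{H}_\epsilon|_{\widetilde{\mathcal{X}_i}})$ is normalized with respect to its central fiber; the remark following Lemma \ref{duseful} is tailored to exactly this situation, so this should go through. Second, and more delicate, is the bookkeeping of the conductor $\mathfrak{cond}_{\tilde X}$: it becomes part of the boundary on each normalized component and must be carried consistently through the adjunction formula so that the inequality $A_{(\tilde X,\Delta_{\tilde X})}(v)<0$ is preserved after restricting to $\widetilde{X_i}\cap D_1\cap\cdots\cap D_{n-k}$, precisely as in the Claim of Theorem \ref{singular}. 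Ensuring the general $D_i$ meet the conductor transversally and keep each cut normal is where the Bertini-type genericity choices will have to be made carefully.
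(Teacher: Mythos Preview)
Your proposal is correct and follows essentially the same route as the paper: reduce to the normalization via the identity $\mathrm{DF}_\Delta(\mathcal{X},\mathcal{H}_\epsilon+jL)=M^{\mathrm{NA}}_{\Delta_{\tilde X}}(\widetilde{\mathcal{X}},\widetilde{\mathcal{H}}_\epsilon+jL)$, use the slc modification of \cite{OX} to produce $Z$ with $A_{(\tilde X,\Delta_{\tilde X})}(v)<0$ for all $v\in\mathrm{Rees}(\nu^{-1}Z)$, take the partially normalized deformation to the normal cone (with base change to reduce the central fiber), decompose $W_k^\Delta$ via Lemma \ref{duseful}, and then run the $\epsilon$-expansion of Theorem \ref{singular} component by component; the non-reduced $\lceil\Delta\rceil$ case is handled identically. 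One small wording correction: the ``remaining summands'' are not merely nonnegative but are either zero (for components not meeting $Z$) or themselves $O(\epsilon^{r+1})$ with nonpositive leading sign, which is what actually makes the total negative for $0<\epsilon\ll1$.
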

 
\begin{proof}
Assume that $(X,\Delta)$ is not slc but $f $ is $\mathfrak{f}$-semistable. If $\lceil\Delta\rceil$ is not reduced, let $F$ be an irreducible component of $\Delta$ whose coefficient is larger than 1. Then, as in the proof of Theorem \ref{singular}, we can prove that the partially normalization of the deformation to the normal cone $(\mathcal{X},\mathcal{H})$ of $F$ with some polarization $\mathcal{H}$ satisfies that there exists $0\le k\le \mathrm{dim}\, B$ such that
\[
W_k^\Delta (\mathcal{X},\mathcal{H})<0
\]
and
\[
W_i^\Delta (\mathcal{X},\mathcal{H})=0
\]
for $i<k$ by Lemma \ref{samescalar} and Lemma \ref{duseful}. Thus, we may assume that $\lceil\Delta\rceil$ is reduced. By \cite[Corollary 1.2]{OX}, there exists the slc modification $\pi:Y\to X$.
It is easy to see that there exists a closed subscheme $Z$ such that $\pi$ is the blow up along $Z$. Let $\nu:\tilde{X}\to X$ be the normalization and $D=\mathfrak{cond}_{\tilde{X}}$ be the conductor. If $\nu_Y:\tilde{Y}\to Y$ is the normalization of $Y$ and $\tilde{\pi}:\tilde{Y}\to\tilde{X}$ is the induced morphism, then $(\tilde{Y},\Delta_{\tilde{Y}}+\tilde{\pi}_*^{-1}D)$ is the lc modification of $(\tilde{X},\Delta_{\tilde{X}}+D)$ where $\Delta_{\tilde{Y}}=(\nu_Y)^{-1}_*\Delta_Y$ and $\Delta_{\tilde{X}}=\nu^{-1}_*\Delta_X$ (cf., \cite[Lemma 3.1]{OX}). It is easy to see that $\tilde{\pi}$ is the normalized blow up along $\nu^{-1}Z$ and $A_{(\tilde{X}, \Delta_{\tilde{X}}+D)}(v)<0$ for $v\in \mathrm{Rees}(\nu^{-1}Z)$. Let $(\mathcal{X},\mathcal{H}_\epsilon=H_{\mathbb{P}^1}-\epsilon E)$ be the partially normalization of the deformation to the normal cone of $Z$ where $E$ is the exceptional divisor and $\epsilon>0$. Let also $(\widetilde{\mathcal{X}},\widetilde{\mathcal{H}_\epsilon})$ be the normalization of $(\mathcal{X},\mathcal{H}_\epsilon)$. We may assume that the central fiber $\widetilde{\mathcal{X}}_0$ of $\widetilde{\mathcal{X}}$ is reduced by replacing $\widetilde{\mathcal{X}}$ by the partially normalized base change via $\mathbb{A}^1\ni t\mapsto t^d\in \mathbb{A}^1$. Then
\[
M_{(\Delta_{\tilde{X}}+D)}^{\mathrm{NA}}(\widetilde{\mathcal{X}},\widetilde{\mathcal{H}_\epsilon}+jL)=\mathrm{DF}_{(\Delta_{\tilde{X}}+D)}(\widetilde{\mathcal{X}},\widetilde{\mathcal{H}_\epsilon}+jL)=\mathrm{DF}_\Delta(\mathcal{X},\mathcal{H}_\epsilon+jL),
\]
for $j\in\mathbb{Q}$. Therefore,
\[
W_k^{(\Delta_{\tilde{X}}+D)}(\widetilde{\mathcal{X}},\widetilde{\mathcal{H}_\epsilon})=W_k^\Delta(\mathcal{X},\mathcal{H}_\epsilon)
\]
for $0\le k \le \mathrm{dim}\, B$. Hence, by the proof of Theorem \ref{singular} and Lemmas \ref{samescalar} and \ref{duseful}, we can prove that there exists $0\le k\le \mathrm{dim}\, B$ such that
\[
W_k^\Delta (\mathcal{X},\mathcal{H}_\epsilon)<0
\]
and
\[
W_i^\Delta (\mathcal{X},\mathcal{H}_\epsilon)=0
\]
for $i<k$ and for sufficiently small $\epsilon>0$.
\end{proof}
 
 As in Proposition \ref{parres}, we need the following partial resolution.
 
\begin{lem}\label{darres}
Let $(X,\Delta)$ be a projective slc pair and fix an slc center $C$ such that $\mathrm{codim}\,C\ge 2$ (i.e., if $\nu:\tilde{X}\to X$ is the normalization, there exists an lc center $C'$ such that $\nu(C')=C$). Then there exists a closed subscheme $Z$ that satisfies the following conditions.
\begin{enumerate}
\item The reduced structure $\mathrm{red}\,(Z)$ is contained in $C$,
\item $A_{(X,\Delta)}(v)=0$ for any $v\in \mathrm{Rees}(\nu^{-1}Z)$, 
\item There exists at least one valuation $v\in \mathrm{Rees}(\nu^{-1}Z)$ such that the center of $v$ dominates $C$.
\end{enumerate}
\end{lem}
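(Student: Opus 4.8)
The plan is to reduce to the normalization, apply Proposition~\ref{parres} there to a collection of lc places compatible with the gluing, and then descend the resulting subscheme to $X$ by the gluing procedure used in the proof of Theorem~\ref{oxfh}(iii).

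First I would pass to the normalization $\nu:\tilde{X}\to X$ and put $\Delta_{\tilde{X}}=\nu^{-1}_*\Delta+\mathfrak{cond}_{\tilde{X}}$, so that $(\tilde{X},\Delta_{\tilde{X}})$ is lc by the definition of slc; recall that for deminormal pairs $A_{(X,\Delta)}(v)$ is defined to be $A_{(\tilde{X},\Delta_{\tilde{X}})}(v)$ (the corollary after Theorem~\ref{oxfh}). By hypothesis there is an lc center $C'$ of $(\tilde{X},\Delta_{\tilde{X}})$ with $\nu(C')=C$ and $\mathrm{codim}\,C'\ge 2$. Let $D=\mathfrak{cond}_{\tilde{X}}$ and let $\tau$ be the gluing involution on $D^{\mathrm{n}}$ recovering $X$ from $\tilde{X}$. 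I would take the finite set $\{E_j\}$ of lc places of $(\tilde{X},\Delta_{\tilde{X}})$ consisting of one place with center $C'$ together with the places forced on me by the gluing, so that $\{E_j\}$ is compatible with $\tau$; each satisfies $A_{(\tilde{X},\Delta_{\tilde{X}})}(E_j)=0$ and has center of codimension $\ge 2$ dominating $C$.

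Applying Proposition~\ref{parres} to $(\tilde{X},\Delta_{\tilde{X}})$ and $\{E_j\}$ yields a closed subscheme $\tilde{Z}\subset\bigcup_j c_{\tilde{X}}(E_j)\subset\nu^{-1}(C)$ with $\emptyset\ne\mathrm{Rees}(\tilde{Z})\subset\{\mathrm{ord}_{E_j}/\mathrm{ord}_{E_j}(\tilde{Z})\}$; thus every $v\in\mathrm{Rees}(\tilde{Z})$ is an lc place, so $A_{(\tilde{X},\Delta_{\tilde{X}})}(v)=0$, and at least one such $v$ survives, with $c_{\tilde{X}}(v)$ dominating $C$. Here the crucial point, to be argued as in Theorem~\ref{oxfh}(iii), is that the model $X_2\to\tilde{X}$ producing $\tilde{Z}$ in the proof of Proposition~\ref{parres} is the unique log canonical model of a pair built from the $\tau$-symmetric data $\sum_j E_j$; by this uniqueness $\tau$ lifts to $X_2$, whence the defining ideal of $\tilde{Z}$ is $\tau$-invariant. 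Since $\dim\tilde{Z}\le\dim X-2$, this ideal is moreover trivial at every codimension $1$ point of $\tilde{X}$, in particular along $D$.

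Finally I would descend $\tilde{Z}$ to $X$. As in Theorem~\ref{oxfh}(iii), the $\tau$-invariance of the ideal of $\tilde{Z}$ lets one glue by Koll\'{a}r's gluing theorem \cite[Theorem 5.13]{Ko}, so that this ideal descends to an ideal sheaf on $X$; I take $Z$ to be the subscheme it cuts out (equivalently $Z=\nu(\tilde{Z})$), and one checks $\mathfrak{a}_Z\mathcal{O}_{\tilde{X}}=\mathfrak{a}_{\tilde{Z}}$, hence $\mathrm{Rees}(\nu^{-1}Z)=\mathrm{Rees}(\tilde{Z})$. Conditions (1)--(3) then follow at once: $\mathrm{red}(Z)\subset C$ because $\tilde{Z}\subset\nu^{-1}(C)$; $A_{(X,\Delta)}(v)=A_{(\tilde{X},\Delta_{\tilde{X}})}(v)=0$ for $v\in\mathrm{Rees}(\nu^{-1}Z)$; and the surviving place gives a Rees valuation whose center dominates $C$. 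I expect the descent step to be the main obstacle: one must carry the $\tau$-equivariance through the minimal model program of Proposition~\ref{parres} and verify the gluing hypotheses of \cite[Theorem 5.13]{Ko}, which is exactly where the uniqueness of the log canonical model and the codimension bound on $C$ enter.
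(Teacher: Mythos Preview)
Your approach is genuinely different from the paper's, and the descent step you flag as ``the main obstacle'' is in fact a real gap that does not close as written.

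The paper avoids descent altogether. It perturbs the pair \emph{on $X$ itself}: choose a divisor $D\in|mH|$ through $C$ that misses every other slc center, and consider $(X,\Delta+\epsilon D)$ for small $\epsilon>0$. This deminormal pair is non-slc precisely along $C$ (checked on a log resolution of the normalization), and $\lceil\Delta+\epsilon D\rceil$ is reduced, so the slc modification $g:W\to X$ of $(X,\Delta+\epsilon D)$ exists by Theorem~\ref{oxfh}(iii). The blow-up center $Z$ of $g$ is then the desired subscheme on $X$, with all three properties read off directly. No gluing or descent is needed, because the slc modification theorem already operates on the deminormal scheme.

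Your route, by contrast, produces $\tilde{Z}$ on $\tilde{X}$ via Proposition~\ref{parres} and then tries to descend. There are several problems. First, the involution $\tau$ lives on $D^{\mathrm{n}}$, not on $\tilde{X}$ or on the set of lc places of $(\tilde{X},\Delta_{\tilde{X}})$; so the phrases ``$\{E_j\}$ compatible with $\tau$'' and ``$\tau$ lifts to $X_2$'' do not typecheck as stated. In the proof of Theorem~\ref{oxfh}(iii) the lift of $\tau$ is to the normalized strict transform $D'^{\mathrm{n}}$ of the conductor, and it works because the restriction of the lc modification to the conductor is again an lc modification, hence canonical; the log canonical model $X_2$ of Proposition~\ref{parres} has no such canonical restriction to the conductor, since it depends on the auxiliary choice of $\{E_j\}$ and $\delta$. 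Second, Proposition~\ref{parres} only guarantees $\mathrm{Rees}(\tilde{Z})\subset\{\mathrm{ord}_{E_j}/\mathrm{ord}_{E_j}(\tilde{Z})\}$, not equality, so even a symmetric input set $\{E_j\}$ can produce a $\tilde{Z}$ whose Rees valuations are not symmetric. Third, Koll\'{a}r's gluing theorem glues slc pairs, not ideal sheaves; the assertion $\mathfrak{a}_Z\mathcal{O}_{\tilde{X}}=\mathfrak{a}_{\tilde{Z}}$ for $Z=\nu(\tilde{Z})$ is not automatic and would require a separate argument. The paper's perturbation trick is precisely designed to bypass all of this.
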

 
\begin{proof}
Fix an ample line bundle $H$ and let $\mathscr{I}$ be the ideal sheaf corresponding to the reduced structure of $C$. Then the linear system $\mathfrak{d}=H^0(X,\mathscr{I}\otimes\mathcal{O}(mH))$ is base point free outside from $C$ for sufficiently large $m>0$. We can choose $D\in \mathfrak{d}$ such that $D$ contains no slc centers other than those contained in $C$ and $ X_i\not\subset \mathrm{supp}(D)$ for any irreducible component $X_i$ of $X$. Let $f:Y\to\tilde{X}$ be a log resolution of $(\tilde{X},\nu^{-1}_*\Delta+\nu^*D+\mathfrak{cond}_{\tilde{X}})$ and a resolution of the base locus of $ \mathfrak{d}$. By replacing $D$ by general one, we may assume that $f^*\nu^*D$ does not contain any prime divisor $E$ on $Y$ such that $A_{(X,\Delta)}(E) (=A_{(\tilde{X},\nu^{-1}_*\Delta+\mathfrak{cond}_{\tilde{X}})}(E))=0$ and $\nu\circ f(E)\not\subset C$ due to the theorem of Bertini. Let $\Delta_Y$ be the $\mathbb{Q}$-divisor satisfies that
\[
K_Y+\Delta_Y=f^*(K_{\tilde{X}}+\nu^{-1}_*\Delta+\mathfrak{cond}_{\tilde{X}}).
\]
Then, since $(Y,\Delta_Y)$ is log smooth and sublc, for non-lc centers $C'$ on $(\tilde{X},\nu^{-1}_*\Delta+\epsilon\nu^*D+\mathfrak{cond}_{\tilde{X}})$, $\nu(C')\subset C$ for sufficiently small rational $\epsilon>0$. Note that $(X,\Delta+\epsilon\nu^*D)$ is not slc along $C$ but $\lceil \epsilon\nu^*D+\Delta\rceil$ is reduced. Thanks to \cite[Corollary 1.2]{OX}, we take the slc modification $g:W\to X$ of $(X,\Delta+\epsilon\nu^*D)$ and there exists a closed subscheme $Z$ such that $g$ is the blow up along $Z$. It is easy to see that $\mathrm{red}\,(\nu^{-1}Z)$ is contained in $C$, $A_{(X,\Delta)}(v)=0$ for any $v\in \mathrm{Rees}(\nu^{-1}Z)$ and there exists at least one valuation $v\in \mathrm{Rees}(Z)$ such that the center of $v$ dominates $C$.
\end{proof}
 
\begin{thm}\label{dFanofib}
Let $f:(X,\Delta,H)\to (B,L)$ be a polarized deminormal algebraic fiber space. Suppose that there exist $\lambda\in \mathbb{Q}_{>0}$ and a line bundle $L_0$ on $B$ such that $H+f^*L_0\equiv-\lambda\, (K_{X}+\Delta)$, and $f$ is $\mathfrak{f}$-semistable. Then, $(X,\Delta)$ is slc and any slc-center of $(X,\Delta)$ is of fiber type.
\end{thm}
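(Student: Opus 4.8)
The first assertion, that $(X,\Delta)$ is slc, is exactly Theorem \ref{dingular}, so only the statement about slc centers requires work. The plan is to argue by contradiction in parallel with the normal Fano case (Theorem \ref{genFanofib}): assuming $(X,\Delta)$ admits an slc center $C$ of non-fiber type, I will build a fibration degeneration $(\mathcal{X},\mathcal{H}_\epsilon)$ with $W^\Delta_i=0$ for $i<n$ and $W^\Delta_n<0$, contradicting $\mathfrak{f}$-semistability. Since $f$ is $\mathfrak{f}$-semistable, Lemma \ref{samescalar} first lets me assume that $f$ has the same scalar curvature with respect to its fibers; this is precisely the hypothesis that unlocks the component-wise decomposition of the $W^\Delta_k$ provided by Lemma \ref{duseful}. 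As in the normal case I may also assume $H$ is ample and $L$ very ample.

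Next I would apply the deminormal partial resolution Lemma \ref{darres} to the fixed non-fiber-type slc center $C$. This yields a closed subscheme $Z$ with $\mathrm{red}(Z)\subset C$, with $A_{(X,\Delta)}(v)=0$ for every $v\in\mathrm{Rees}(\nu^{-1}Z)$, and with at least one such $v$ whose center dominates $C$. I take $(\mathcal{X},\mathcal{H}_\epsilon=H_{\mathbb{P}^1}-\epsilon E)$ to be the partially normalized deformation to the normal cone of $Z$, with $E$ the exceptional divisor, and after a base change $t\mapsto t^d$ as in \cite[Proposition 7.16]{BHJ} I arrange that the normalization $\widetilde{\mathcal{X}}$ has reduced central fiber. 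Passing to the normalization identifies $W^\Delta_k(\mathcal{X},\mathcal{H}_\epsilon)=W^{(\Delta_{\tilde X}+D)}_k(\widetilde{\mathcal{X}},\widetilde{\mathcal{H}_\epsilon})$ with $D=\mathfrak{cond}_{\tilde X}$, and Lemma \ref{duseful} decomposes the right-hand side as $\sum_i W^{(\Delta_{\tilde X}+D)|_{\widetilde X_i}}_k(\widetilde{\mathcal{X}_i},\mathcal{H}|_{\widetilde{\mathcal{X}_i}})$ over the irreducible components $\widetilde X_i$ of the normalization.

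On each component the situation reduces to the normal Fano case. Because every Rees valuation of $\nu^{-1}Z$ has log discrepancy zero and the normalization splits $\tilde X$ into its components, the non-Archimedean entropy $H^{\mathrm{NA}}$ vanishes on each piece; hence by Proposition \ref{me} each summand equals a positive intersection number times $-\lambda^{-1}\bigl(I^{\mathrm{NA}}-(n+1)J^{\mathrm{NA}}\bigr)$ of the restricted degeneration. Following the $\epsilon$-expansion in the proof of Theorem \ref{genFanofib}, I cut by general members of $|L|$ to reduce to the case where $f(C)$ is $0$-dimensional, so that $W^\Delta_i=0$ for $i<n$ and the computation takes place at level $k=n$. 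The decisive point is the containment $\mathrm{red}(Z)\subset C$ from Lemma \ref{darres}: since $C$ is of non-fiber type and its image has become a point, every surviving Rees center lies in $\nu^{-1}C$ and therefore has codimension $r>n$ in its component. The leading coefficient $\binom{N}{r}(1-\frac{n+1}{r+1})$ is then positive on each such component, so every summand of $W^\Delta_n$ is $\le 0$, and it is strictly negative on the component whose Rees center dominates $C$, which exists by Lemma \ref{darres}(3). This forces $W^\Delta_n(\mathcal{X},\mathcal{H}_\epsilon)<0$ for small $\epsilon>0$, the desired contradiction.

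The main obstacle is sign bookkeeping in the component-wise decomposition: one must guarantee that no component contributes a term of the wrong sign that could cancel the negative one. This is exactly where the containment $\mathrm{red}(Z)\subset C$ is essential, since confining all Rees centers to the single non-fiber-type center $C$ ensures, after reduction to $k=n$, that every contributing center is of non-fiber type and hence every summand is non-positive. A secondary technical point, handled exactly as in the Claim within the proof of Theorem \ref{singular}, is that both the vanishing of log discrepancies and the non-fiber-type property are preserved under restriction to general hyperplane sections of $B$.
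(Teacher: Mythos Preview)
Your overall strategy mirrors the paper's final step closely, but there is a genuine gap: you apply Lemma \ref{darres} directly to an arbitrary non-fiber-type slc center $C$, yet that lemma explicitly requires $\mathrm{codim}_X C\ge 2$. The codimension-one slc centers---irreducible components of the conductor $\mathfrak{cond}_X$ and components of $\lfloor\Delta\rfloor$---are not covered by your argument, and they can certainly be of non-fiber type (e.g.\ when the general fiber of $f$ is non-normal, or when a component of $\lfloor\Delta\rfloor$ dominates $B$). The paper handles these two cases separately \emph{before} invoking Lemma \ref{darres}: for a non-fiber-type conductor component it uses Proposition \ref{parres} applied to the divisors $D_i$ on $\tilde X$ to produce an ideal $\mathfrak{a}\subset\mathcal{O}_{\tilde X}(-\tilde D)$ (hence an ideal on $X$), and for a non-fiber-type component $F$ of $\lfloor\Delta\rfloor$ it again uses Proposition \ref{parres} to get $Z$ with $\mathrm{Rees}(Z)=\{F\}$. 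In both cases the deformation to the normal cone already gives $W_0^\Delta<0$. Only after disposing of these does the paper pass to the codimension $\ge 2$ situation where Lemma \ref{darres} is available.

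Once the codimension-one cases are handled, your argument is essentially the paper's. One minor point: the paper chooses $k=\min\{\mathrm{codim}_B f(C)\}$ over all non-fiber-type slc centers, whereas you fix an arbitrary $C$; either works since the crucial input is $\mathrm{red}(Z)\subset C$ from Lemma \ref{darres}, which guarantees that after cutting down to $\dim f(C)=0$ every Rees center lies over a point and hence is of non-fiber type.
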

 
\begin{proof}
We may assume that $H$ is ample and $L$ is very ample. It follows from Theorem \ref{dingular} that $(X,\Delta)$ is slc. Assume that there exists at least one slc-center or singular locus of codimension $1$ of $(X,\Delta)$ that is of non-fiber type.
 
First, assume that there exists at least one irreducible component of non-fiber type of the conductor subscheme $D=\mathfrak{cond}_X$ of $X$. Let $\nu:\tilde{X}\to X$ be the normalization, $\tilde{D}=\mathfrak{cond}_{\tilde{X}}$ be the conductor divisor. Note that $\tilde{D}$ need not to be $\mathbb{Q}$-Cartier in general. Due to Proposition \ref{parres}, there exists a coherent ideal sheaf $\mathfrak{a}\subset\mathcal{O}_{\tilde{X}}$ such that $\mathrm{Rees}(\mathfrak{a})$ is the set $\{ \frac{\mathrm{ord}_{D_i}}{\mathrm{ord}_{D_i}(\mathfrak{a})}\}$ where $D_i$ are all irreducible components of $\tilde{D}$. Since $\mathfrak{a}\subset\mathcal{O}_{\tilde{X}}(-\tilde{D})$, we can consider $\mathfrak{a}$ to be an ideal sheaf of $X$ and let $Z$ be the closed subscheme of $X$ corresponding to $\mathfrak{a}$. Then consider the partially normalization of the deformation to the normal cone $(\mathcal{X},\mathcal{H}_\epsilon=H_{\mathbb{A}^1}-\epsilon E)$ of $Z$ where $E$ is the exceptional divisor for $\epsilon>0$. We may assume that the central fiber of $\mathcal{X}$ is reduced by the same argument of the proof of Theorem \ref{dingular}. If $\tilde{\mathcal{X}}$ is the normalization of $\mathcal{X}$, then $H_\Delta^{\mathrm{NA}}(\tilde{\mathcal{X}}_b,\widetilde{\mathcal{H}_{\epsilon}|_{\tilde{\mathcal{X}}_b}})=0$ for general $b\in B$ and hence $W_0^{\Delta}(\mathcal{X},\mathcal{H}_\epsilon)<0$. Therefore, we may assume that any singular locus of codimension $1$ of $(X,\Delta)$ is of fiber type. In other words, we may assume that the general fiber is normal. Moreover, the general fiber is also lc since $(X,\Delta)$ is slc.
 
Next, assume that there exists at least one irreducible component of non-fiber type $F$ of $\lfloor\Delta\rfloor$. Let $X_1$ be the irreducible component of $X$ containing $F$. There exists a closed subscheme $Z$ of $X_1$ such that $\mathrm{Rees}(Z)=\{ F\}$ due to Proposition \ref{parres}. For general point $b\in f(X_1)\subset B$, $((X_1)_b,\Delta_{|(X_1)_b})$ is an lc pair and $\mathrm{Rees}(Z_b)=\{ F_b\}$. Therefore, it is easy to see that if $(\mathcal{X},\mathcal{H}_\epsilon)$ is the partially normalization of the deformation of $X$ to the normal cone of $Z$ where $E$ is the inverse image of $Z\times\{0\}$ and $\mathcal{H}_\epsilon=H_{\mathbb{A}^1}-\epsilon E$, then we have $W_0^{\Delta}(\mathcal{X},\mathcal{H}_\epsilon)<0$ for sufficiently small $\epsilon>0$. Thus, we conclude that $\lfloor\Delta\rfloor$ is of fiber type.
 
Finally, let $k=\min\{\mathrm{codim}_Bf(C):$ where $C$ is a slc center of non-fiber type$\}$ and fix a slc center $C$ of non-fiber type such that $\mathrm{codim}_Bf(C)=k$. We may assume that $\mathrm{codim}\,C\ge 2$. By Lemma \ref{darres}, there exists a closed subscheme $Z$ contained in $C$ such that $A_{(X,\Delta)}(v)=0$ for any $v\in \mathrm{Rees}(Z)$ and there exists at least one valuation $v\in \mathrm{Rees}(Z)$ such that the center of $v$ dominates $C$. Take general divisors $D_1,\cdots,D_{n-k}\in |L|$ and cut $X$ by $D_1,\cdots,D_{n-k}$. Now, let $(\mathcal{X},\mathcal{H}_\epsilon)$ be the partially normalization of the deformation of $X$ to the normal cone of $Z$ where $E$ is the exceptional divisor and $\mathcal{H}_\epsilon=H_{\mathbb{A}^1}-\epsilon E$. Then, we will prove that $W_k^{\Delta}(\mathcal{X},\mathcal{H}_\epsilon)<0$ for sufficiently small $\epsilon>0$. Here, we may assume that $\mathrm{dim}\, B=k$ by replacing $X$ by $X\cap D_1\cap\cdots\cap D_{n-k}$. Since $\mathrm{dim}\,f(C)=0$, any center of $v'\in \mathrm{Rees}(Z)$ is of non-fiber type. Hence, we can prove that \[
W_k^\Delta (\mathcal{X},\mathcal{H}_\epsilon)<0
\]
and
\[
W_i^\Delta (\mathcal{X},\mathcal{H}_\epsilon)=0
\]
for $i<k$ and sufficiently small $\epsilon>0$ similarly as Theorem \ref{dingular} and Theorem \ref{genFanofib}.
\end{proof}
 
Adiabatic K-semistability implies $\mathfrak{f}$-semistability. Thus we conclude that the following holds by the previous theorem.
\begin{cor}\label{kei56}
 Let $f:(X,\Delta,H)\to (B,L)$ be an adiabatically K-semistable polarized deminormal algebraic fiber space such that $H+f^*L_0\equiv-\lambda\, (K_{X}+\Delta)$ where $L_0$ is a line bundle on $B$ and $\lambda\in\mathbb{Q}_{>0}$. Then $(X,\Delta)$ is slc and any slc-center of $(X,\Delta)$ is of fiber type.
 \end{cor}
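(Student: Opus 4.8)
The plan is to deduce this immediately from Theorem \ref{dFanofib}, once we check that adiabatic K-semistability forces $\mathfrak{f}$-semistability; this implication is already flagged at the end of Remark \ref{trrem}. So the entire content is the chain ``adiabatically K-semistable $\Rightarrow$ $\mathfrak{f}$-semistable $\Rightarrow$ slc with fiber-type centers,'' and the only genuinely new step is the first arrow, the second being Theorem \ref{dFanofib} verbatim.

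First I would unwind the definitions. By Definition \ref{algfibk}, adiabatic K-semistability of $(X,\Delta,H)$ over $(B,L)$ means that $(X,\Delta,\epsilon H+f^*L)$ is K-semistable for all sufficiently small $\epsilon>0$. Since K-semistability is unaffected by rescaling the ample polarization by a positive rational number, K-semistability of $(X,\Delta,\epsilon H+f^*L)$ is equivalent to that of $(X,\Delta,H+jL)$ with $j=1/\epsilon$. Hence adiabatic K-semistability is precisely the statement that $\mathrm{DF}_\Delta(\mathcal{X},\mathcal{M})\ge 0$ for every semiample test configuration $(\mathcal{X},\mathcal{M})$ of $(X,H+jL)$, for all large $j$.

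Next I would restrict this inequality to fibration degenerations. Let $(\mathcal{X},\mathcal{H})$ be a test configuration dominating $X_{\mathbb{A}^1}$ with $\mathcal{H}+jL$ semiample for $j\gg0$, which is the class over which $\mathfrak{f}$-semistability is tested by the equivalent criterion in Definition \ref{ds2}. Then $(\mathcal{X},\mathcal{H}+jL)$ is a semiample test configuration for $(X,H+jL)$ once $j$ is large, so the previous step yields $\mathrm{DF}_\Delta(\mathcal{X},\mathcal{H}+jL)\ge 0$, and therefore $V(H+jL)\mathrm{DF}_\Delta(\mathcal{X},\mathcal{H}+jL)\ge 0$, for all large $j$. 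By the partial-fraction decomposition defining the invariants in Definition \ref{ds2}, this quantity equals $W^{\Delta}_{n+1}(\mathcal{X},\mathcal{H})(j)+\sum_{i=0}^n j^iW^{\Delta}_{n-i}(\mathcal{X},\mathcal{H})$, with $W^{\Delta}_{n+1}(\mathcal{X},\mathcal{H})(j)\to 0$ as $j\to\infty$. If the top nonvanishing coefficient among $W^{\Delta}_0,\dots,W^{\Delta}_n$ were negative, the polynomial part would tend to $-\infty$ while the rational correction term stays bounded, contradicting the displayed nonnegativity for large $j$; hence $\sum_{i=0}^n j^iW^{\Delta}_{n-i}(\mathcal{X},\mathcal{H})\ge 0$ for all large $j$, which is exactly the $\mathfrak{f}$-semistability criterion.

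Finally, since the hypothesis $H+f^*L_0\equiv-\lambda(K_X+\Delta)$ with $\lambda\in\mathbb{Q}_{>0}$ is identical to that of Theorem \ref{dFanofib}, I would invoke that theorem to conclude that $(X,\Delta)$ is slc and every slc-center is of fiber type. I do not expect any real obstacle: the argument is entirely formal. The only point requiring a word of care is the passage from nonnegativity of the full rational function to nonnegativity of its polynomial part, which is handled by the vanishing of the $W^{\Delta}_{n+1}$-term at infinity.
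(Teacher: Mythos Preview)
Your proposal is correct and follows exactly the approach of the paper: the paper's proof is the single sentence ``Adiabatic K-semistability implies $\mathfrak{f}$-semistability. Thus we conclude that the following holds by the previous theorem,'' and you have simply unpacked the first implication (already noted in Remark \ref{trrem}) and then invoked Theorem \ref{dFanofib}.
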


\end{document}